\numberwithin{equation}{section}
\newcommand{\Hm}[1]{\leavevmode{\marginpar{\tiny%
$\hbox to 0mm{\hspace*{-0.5mm}$\leftarrow$\hss}%
\vcenter{\vrule depth 0.1mm height 0.1mm width \the\marginparwidth}%
\hbox to 0mm{\hss$\rightarrow$\hspace*{-0.5mm}}$\\\relax\raggedright
#1}}}
\newtheorem{theorem}{Theorem}[section]
\newtheorem*{theorem*}{Theorem}
\newtheorem{corollary}[theorem]{Corollary}
\newtheorem{lemma}[theorem]{Lemma}
\newtheorem{proposition}[theorem]{Proposition}
\theoremstyle{definition}
\newtheorem{definition}[theorem]{Definition}
\theoremstyle{remark}
\newtheorem{remark}[theorem]{Remark}
  \DeclareMathOperator {\supp} {supp\,}
  \newcommand {\CC}{\mathbb C}  
  \newcommand {\NN}{\mathbb N}
  \newcommand {\RR}{\mathbb R}
  \newcommand {\BOUNDED}{\mathscr B}
  \newcommand {\eins} {\mathbbm 1}
  \newcommand {\om}{\omega}
  \newcommand {\norm}[1] {\| #1 \|}  %% be *very* careful if you whant
  \newcommand {\bignorm}[1]{\bigl\| #1 \bigr\|}
  \newcommand {\Bignorm}[1]{\Bigl\| #1 \Bigr\|}
  \newcommand {\dsp} {\displaystyle}
  \newcommand {\sL} {\mathscr L}
  \newcommand{\MS}{{X}}  %% "metric space"
  \newcommand{\DIM}{n}
\newcounter{aufzi}
\newenvironment{aufzi}{\begin{list}{ {\upshape(\alph{aufzi})}}{
        \usecounter{aufzi}
        \topsep1ex
%        \partopsep
        \parsep0cm
        \itemsep1ex
        \leftmargin0.8cm
%        \rightmargin
%        \listparindent
        \labelwidth0.5cm
        \labelsep0.3cm
        %\itemindent-0.3cm
}}
{\end{list}}
\newcounter{aufziH}
\newenvironment{aufziH}{
\begin{list}{\upshape(H\arabic{aufziH})}{
    \usecounter{aufziH}
    \topsep1ex
    \parsep0cm
    \itemsep1ex
    \leftmargin0.8cm
    \labelwidth0.5cm
    \labelsep0.3cm
}}{
\end{list}
}
\newcounter{aufzii}
\newenvironment{aufzii}{\begin{list}{\hfill {\upshape 
(\roman{aufzii})}}{
        \usecounter{aufzii}
        \topsep1ex
%        \partopsep
        \parsep0cm
        \itemsep1ex
        \leftmargin0.8cm
%        \rightmargin
%        \listparindent
        \labelwidth0.5cm
        \labelsep0.3cm
         \itemindent0cm
}}
{\end{list}}
\newcounter{aufziii}
\begin{document}

\title{ A weak type $(p,a)$ criterion for operators, and applications } 

\author{Bernhard~Haak and El Maati~Ouhabaz} \address{ Institut de Math\'ematiques de Bordeaux, Universit\'e de Bordeaux, UMR CNRS 5251, 351 Cours de la Lib\'eration 33405, Talence.  France}  
\email{Bernhard.Haak@math.u-bordeaux.fr}
\email{Elmaati.Ouhabaz@math.u-bordeaux.fr}

\dedicatory{Dedicated to Peer Christian Kunstmann  on the occasion of his 60th birthday}

\begin{abstract} {Let $(\MS, d, \mu)$ be a space of homogeneous type
    and $\Omega$ an open subset of $\MS$. Given a bounded operator
    $T: L^p(\Omega) \to L^q(\Omega)$ for some
    $1 \le p \le q < \infty$, we give a criterion for $T$ to be of weak
    type $(p_0, a)$ for $p_0$ and $a$ such that
    $\frac{1}{p_0} - \frac{1}{a} = \frac{1}{p}-\frac{1}{q}$.  These
    results are illustrated by several applications including
    estimates of weak type $(p_0, a)$ for Riesz potentials
    $\sL^{-\frac{\alpha}{2}}$ or for Riesz transform type operators
    $\nabla \Delta^{-\frac{\alpha}{2}}$ as well as $L^p-L^q$ boundedness of
    spectral multipliers $F(\sL)$ when the heat kernel of $\sL$
    satisfies a Gaussian upper bound or an off-diagonal bound. We also
    prove boundedness of these operators from the Hardy space
    $H^1_\sL$ associated with $\sL$ into $L^a(\MS)$. By duality this
    gives boundedness from $L^{a'}(\MS)$ into $\text{BMO}_\sL$.}
 \end{abstract}
 
\vspace{1cm}

\maketitle

\noindent{\bf Keywords}: Singular integral operators, weak type
operators, Riesz potential, Riesz transforms, Hardy spaces, spectral
multipliers, Schr\"odinger operators.

%\vspace{1cm} 
\noindent {\bf Mathematics Subject Classification}: 42B20, 47G10, 42B30, 47A60.

 \vspace{1cm}
 %\tableofcontents

 \section{Introduction and main results}\label{sec:main}

 This article deals with extrapolation of operators acting between
 Banach space-valued $L^p$ spaces over a metric measure space $\MS$
 endowed with a doubling measure $\mu$. The expression ``doubling''
 refers to the fact that there is some constant $C>0$ for which the
 volume of the doubled ball satisfies
 \[
   \mu( B(x, 2r) ) \le C \, \mu(B(x,r)).
 \]
 Here $x \in \MS$ and $r>0$ are arbitrary. Spaces enjoying this
 property are called spaces of {\it homogeneous type} and play an
 important role in harmonic analysis due to a degree of generality
 that permit a large range of applications, including classical
 Euclidean settings, analysis on manifolds, analysis on graphs or even
 on fractals.  Operators involved in these settings are often singular
 integral operators.  Let $T$ be an operator acting from $L^p(\MS, \mu)$
 for some $p \ge 1$. We say that $T$ is given by a (singular) kernel
 $K_T(x,y)$ if $Tf(x) = \int_\MS K_T(x,y) f(y) \, d\mu(y)$ for all $f$
 with bounded support and for almost all $x$ which do not belong to
 the support of $f$. The function $K_T$ is locally integrable away from
 the diagonal $\{(x,x): \; x \in \MS \}$. One of the most important questions
 on singular integral operators is to have sufficient conditions on
 the kernel which allow the operator $T$ to be bounded on
 $L^p(\MS, \mu)$ for a given $p \in (1, \infty)$. This subject has
 been studied for decades and the so-called $T 1$ or $T b$ theorems
 apply if $K_T(x,y)$ is a Calder\'on-Zygmund kernel. A different
 classical problem is to start with $T$ which is already bounded, say
 on $L^2(\MS, \mu)$, and search for conditions which allow us to
 extrapolate $T$ as a bounded operator on $L^p(\MS, \mu)$ for some or
 all $p \in (1, \infty)\setminus \{2\}$. The well known {\it almost
   $L^1$} condition of H\"ormander
 \[
 \sup_{y,  y' \in \MS} \int_{d(x,y) \ge 2 d(y,y')}  | K_T(x,y) - {K}_{T}(x,y') |   \, d\mu(x) < \infty
 \]
 implies that $T$ is of weak type $(1,1)$ and hence bounded on
 $L^p(\MS, \mu)$ for all $p \in (1, 2)$.  Note however, that, in
 practice, one needs the kernel $K_T$ to be H\"older continuous in the
 second variable in order to check this condition. A more suitable
 condition for non-smooth kernels was introduced by Duong and McIntosh
 \cite{Duong-McIntosh}. It says that if $(A_r)_{r > 0}$ is an
 approximation of the identity with kernel that decays sufficiently fast
 (Gaussian bounds for instance) and if
 \[
 \sup_{y \in \MS, r >0} \; 
  \int_{d(x,y) \ge 2 r} | {K}_T(x,y) - {K}_{TA_r}(x,y) |   \, d\mu(x) < \infty
\]
then $T$ is also of weak type $(1,1)$. This criterion is also valid
if the underlying space is any nontrivial open subset $\Omega$ of
$\MS$. Blunck and Kunstmann \cite{Blunck-Kunstmann} provide a
condition for $T$ to be of weak type $(p_0, p_0)$ for $p_0 > 1$. We
also refer to subsequent improvements and reformulations by Auscher
\cite{Auscher:memoir} and ter Elst and Ouhabaz \cite{Elst-Ouhabaz}.
 
The primary aim of the present paper is to provide a sufficient
condition for an operator $T: L^p(\MS, \mu) \to L^q(\MS, \mu)$ with
$p \le q$ to be of weak type $(p_0, a)$ for $ p_0 \le a$. The case
$ p_0 =a > 1$ recovers the result from \cite{Blunck-Kunstmann} and the
case $p_0 = a = 1$ recovers the result in \cite{Duong-McIntosh}. See
Theorem~\ref{thm:main} and Corollary~\ref{coro:main} below.  Before we
state explicitly our extrapolation results we introduce some notation.
 
\bigskip

Let $(\MS, d, \mu)$ be a metric measure space and denote again by
\[
  B(x,r) = \{y \in \MS: \; d(x,y) < r \}
\]
the open ball of center $x \in \MS$ and radius $r > 0$. Its volume is
denoted by $V(x,r) = \mu( B(x,r) )$. For $j \ge 1$, the annulus
$B(x, (j+1)r) \setminus B(x,j r)$ is denoted by $C_j(x,r)$ and
$C_0(x, r) := B(x, r)$.  We suppose that $(\MS, d, \mu)$ is of
homogeneous type. From the property
\[
  V(x, 2r) \le C \, V(x,r) \quad \forall  x \in \MS, \ r >0
\]
it follows that there exist constants  $n > 0$ and  $C_n$ such that
\begin{equation}
  \label{eq:gonflement-lambda}
  V(x, \lambda r) \le C_n \, \lambda^n V(x,r) \quad \forall  x \in \MS, \ r >0 \ {\rm and}\ \lambda \ge 1.
\end{equation}
Note that the constant $n$ is not unique since
\eqref{eq:gonflement-lambda} holds for any $m>n$ if it holds for $n$.
The dependence of $C_n$ on $n$ keeps us from taking in infimum over
all such $n$. In the sequel we take some possible, but reasonably
small value of $n$ for which the foregoing volume property is
satisfied.

\medskip
The following result provides our weak type extrapolation principle. We first state it in the framework of Lebesgue spaces over the homogeneous space $\MS$. A subsequent corollary  extends  the result to the general setting of open subsets of $\MS$. 

\begin{theorem}\label{thm:main}
  Let $(\MS, d, \mu)$ be a metric measure space of homogeneous type,
  let $(E, \norm{\cdot}_E)$ and $(F, \norm{\cdot}_F)$ be Banach
  spaces and let $p_0, p, q, a \in [1, \infty)$ be such that $p_0 < p \le q$
  where $\frac{1}{p} - \frac{1}{q} = \frac{1}{p_0} -  \frac{1}{a}$. Let 
  $T : L^p(\MS, \mu; E) \to L^q(\MS, \mu; F)$ be a bounded linear
  operator and suppose that there exists a bounded linear operator $S : L^p(\MS, \mu; E) \to L^q(\MS, \mu; F)$
  and a family of linear operators
  $(A_r)_{r>0}$ on $L^p(\MS, \mu; E)$ such that
\begin{aufziH}
\item \label{thm:main-a} for some sequence $(\omega_j)$ of
  non-negative numbers satisfying
  \( \dsp \sum_{j\ge 1} j^n \omega_j < \infty \) and for each
  $f \in L^{p_0}(\Omega; E)$ with bounded support and each ball $B(x, r)$
  containing its support, we have the off-diagonal bound
  \begin{equation} \label{eq:At-estimate}
    \Bigl(\tfrac{1}{V(x,(j+1)r)} \int_{C_j(x, r)} \bignorm{ (A_r f)(y) }_E^p  \,d\mu(y)\Bigr)^{\frac{1}{p}} \le \frac{\om_j}{V(x,r)^{\frac{1}{p_0}}}
    \; \norm{ f }_{L^{p_0}(\MS, \mu;E)}.
\end{equation}
\item there exist  $\delta, W > 0$ such that
\begin{equation} \label{eq:Hoermander-a-condition}  
 \Bigl( \int_{\MS \setminus B(x, (1{+}\delta)r)} \bignorm{ (T - SA_r) f(y) }_F^a\,d\mu(y)\Bigr)^{\frac{1}{a}}  \quad \le \quad W  \,\norm{ f  }_{L^{p_0}(\MS, \mu;E)}
\end{equation}
for all $x\in \MS$, $r>0$ and
$f \in L^{p_0}(\MS, \mu; E)\cap L^\infty(\MS, \mu; E)$ supported in
$B(x, r)$.
\end{aufziH}
\medskip\noindent
Then $T: L^{p_0}(\MS, \mu;  E) \to L^{a, \infty}(\MS, \mu; F)$ is bounded.
\end{theorem}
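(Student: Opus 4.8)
The plan is to run a Calderón--Zygmund decomposition of $f \in L^{p_0}(\MS,\mu;E)$ at height $\la$ and then estimate the weak-$L^a$ norm of $Tf$ by splitting $Tf = Tg + Tb$ into the good and bad parts. First I would normalize and fix $\la > 0$; applying the standard Calderón--Zygmund decomposition adapted to spaces of homogeneous type (using the maximal function of $\norm{f}_E^{p_0}$), I obtain $f = g + b$ with $b = \sum_i b_i$, where each $b_i$ is supported in a ball $B_i = B(x_i, r_i)$, has mean value zero (or at least $\int \norm{b_i}_E \lesssim \la \mu(B_i)$ and $\int \norm{b_i}_{E}^{p_0} \lesssim \la^{p_0}\mu(B_i)$), the balls $B_i$ have bounded overlap, $\sum_i \mu(B_i) \lesssim \la^{-p_0}\norm{f}_{p_0}^{p_0}$, and $\norm{g}_\infty \lesssim \la$ while $\norm{g}_{p_0}^{p_0} \lesssim \norm{f}_{p_0}^{p_0}$. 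The good part is handled by interpolation/boundedness: since $S,T$ map $L^p \to L^q$ boundedly and $g \in L^{p_0} \cap L^\infty$, one gets $g \in L^p$ with $\norm{g}_p^p \lesssim \la^{p-p_0}\norm{f}_{p_0}^{p_0}$, hence $\norm{Tg}_q^q \lesssim \la^{q(p-p_0)/p}\norm{f}_{p_0}^{p_0 q/p}$; Chebyshev in $L^q$ together with the scaling relation $\frac1p - \frac1q = \frac1{p_0} - \frac1a$ then yields $\mu(\{\norm{Tg}_F > \la/2\}) \lesssim \la^{-a}\norm{f}_{p_0}^{p_0}$ after the exponents are matched (this is where the hypothesis $\frac1p-\frac1q = \frac1{p_0}-\frac1a$ is used in an essential way).

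For the bad part, write $Tb_i = (T - SA_{r_i})b_i + SA_{r_i}b_i$. Let $\widehat{B}_i = B(x_i, (1+\delta)r_i)$ and $\Omega^* = \bigcup_i \widehat{B}_i$; since $\mu(\Omega^*) \lesssim \sum_i\mu(B_i) \lesssim \la^{-p_0}\norm{f}_{p_0}^{p_0}$, it suffices to control $Tb$ on the complement $\MS \setminus \Omega^*$. On that set, hypothesis (H2) applied to each $b_i$ (which lies in $L^{p_0}\cap L^\infty$ and is supported in $B_i$) gives
\[
  \Bigl(\int_{\MS\setminus\widehat{B}_i}\bignorm{(T-SA_{r_i})b_i(y)}_F^a\,d\mu(y)\Bigr)^{1/a} \le W\,\norm{b_i}_{L^{p_0}(\MS,\mu;E)} \lesssim W\,\la\,\mu(B_i)^{1/p_0}.
\]
Summing these in the appropriate way — using $a \ge p_0 \ge 1$, the bounded overlap of the $B_i$, and the elementary inequality $\bignorm{\sum_i h_i}_a \le \bigl(\sum_i\norm{h_i}_a^{p_0}\bigr)^{1/p_0}$ valid (via embedding of $\ell^{p_0}$ into $\ell^a$ after interpolating with the $L^1$-type bound) when combined with almost-disjointness — one arrives at $\int_{\MS\setminus\Omega^*}\bignorm{(T-SA)b(y)}_F^a\,d\mu(y) \lesssim \la^a\sum_i\mu(B_i) \lesssim \la^{a-p_0}\norm{f}_{p_0}^{p_0}$, which is exactly a weak-$(p_0,a)$ bound for this piece after Chebyshev.

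The remaining term $SA_{r_i}b_i$ is where hypothesis (H1) enters: the off-diagonal bound (H1) gives, for each annulus $C_j(x_i,r_i)$, that $\norm{A_{r_i}b_i}_{L^p(C_j)} \le \om_j V(x_i,(j+1)r_i)^{1/p}V(x_i,r_i)^{-1/p_0}\norm{b_i}_{p_0} \lesssim \om_j V(x_i,(j+1)r_i)^{1/p}\la\,\mu(B_i)^{1/p_0 - 1/p_0}$... more precisely $\norm{A_{r_i}b_i}_{L^p(C_j)} \lesssim \om_j\,\la\,(j+1)^{n/p}\mu(B_i)^{1/p}$ after using the volume growth \eqref{eq:gonflement-lambda}; hence $\norm{A_{r_i}b_i}_{L^p(\MS;E)} \lesssim \bigl(\sum_j\om_j^p(j+1)^n\bigr)^{1/p}\la\,\mu(B_i)^{1/p} \lesssim \la\,\mu(B_i)^{1/p}$ using $\sum j^n\om_j < \infty$ (hence $\sum\om_j^p(j+1)^n < \infty$). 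Then $\norm{SA_{r_i}b_i}_{L^q(F)} \lesssim \la\,\mu(B_i)^{1/p}$, and summing over $i$ in $\ell^q$-fashion (again via bounded overlap and $q \ge p$) gives $\norm{SAb}_{L^q(\MS;F)} \lesssim \la\,\bigl(\sum_i\mu(B_i)\bigr)^{1/p}$; Chebyshev in $L^q$ plus the exponent relation converts this to the desired $\la^{-a}\norm{f}_{p_0}^{p_0}$ bound. Combining the three contributions $\mu(\{\norm{Tf}_F > \la\}) \lesssim \la^{-a}\norm{f}_{p_0}^{p_0}$ finishes the proof.

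\medskip
\noindent\textbf{Main obstacle.} The delicate point is the summation of the pieces $\norm{(T-SA_{r_i})b_i}_{L^a}$ and $\norm{SA_{r_i}b_i}_{L^q}$ over $i$: because $a$ and $q$ may be strictly greater than $1$, one cannot simply add $L^a$ (resp.\ $L^q$) norms, and because they may be strictly greater than $p_0$ (resp.\ $p$) one cannot directly use orthogonality or $\ell^{p_0}$-superadditivity. The resolution is to exploit the bounded overlap of the balls $B_i$ together with the fact that each $b_i$ is supported in $B_i$: interpolating the crude $L^1$-summability (from $\sum\norm{b_i}_1$) with an $L^\infty$ or $L^{p_0}$-type control, one recovers that $\sum_i\norm{h_i}_{L^a}^{p_0} \le C\bigl\|\sum_i h_i\bigr\|_{L^a}^{p_0}$-style inequalities hold up to the overlap constant; quantifying this carefully, and likewise handling $SA_{r_i}b_i$ where the supports of $A_{r_i}b_i$ are no longer contained in $B_i$ (so one must instead use the $\ell^p$-summability of the tails $\om_j(j+1)^{n/p}$ to regain almost-disjointness in an averaged sense), is the technical heart of the argument.
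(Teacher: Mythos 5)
Your high-level strategy — Calderón–Zygmund decomposition, good/bad split, then a further split of $Tb$ into $\sum_i SA_{r_i}b_i$ and $\sum_i(T-SA_{r_i})b_i$ — is indeed the one the paper follows. However, there are two genuine gaps that stop the proof from closing, and one misjudgment about where the real difficulty lies.

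First, and most critically, you take the Calderón--Zygmund decomposition at the same height $\la$ as the threshold in the weak-type estimate. This is correct when $p_0=a$, but it breaks down here. Follow your own good-part calculation: with $\norm{g}_\infty\lesssim\la$ and $\norm{g}_{p_0}\lesssim\norm{f}_{p_0}=1$ one gets $\norm{g}_p\lesssim\la^{1-p_0/p}$, hence
\[
\mu\bigl(\{\norm{Tg}_F>\tfrac{\la}2\}\bigr)\;\lesssim\;\la^{-q}\norm{Tg}_q^q\;\lesssim\;\la^{-q}\,\la^{q(1-p_0/p)}\;=\;\la^{-qp_0/p},
\]
and the exponent $qp_0/p$ equals $a$ only when $p=p_0$, which contradicts the hypothesis $p_0<p$. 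The paper fixes this by decoupling the two scales: it runs the Calderón--Zygmund decomposition at height $\beta=\alpha^{a/p_0}$, where $\alpha$ is the weak-type threshold, and it is exactly the identity $\tfrac1{p_0}-\tfrac1p=\tfrac1a-\tfrac1q$ applied to the resulting exponent $-q+q\tfrac{a}{p_0}(1-\tfrac{p_0}{p})$ that gives $-a$. Without this change of scale the good-part estimate (and, for the same reason, the $SA_{r_i}b_i$ part) does not close.

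Second, your treatment of the summation over $i$ is not correct as stated, and I think you have the delicate piece and the easy piece reversed. For $\sum_i(T-SA_{r_i})b_i$ on the complement of $\bigcup_i B(x_i,(1{+}\delta)r_i)$, the paper simply uses the triangle inequality in $L^a$, namely $\bignorm{\sum_i h_i}_{L^a}\le\sum_i\norm{h_i}_{L^a}$, together with (H2) applied to each $b_i$; there is no need for the inequality $\bignorm{\sum_i h_i}_a\le\bigl(\sum_i\norm{h_i}_a^{p_0}\bigr)^{1/p_0}$ that you invoke — which is in fact false for $p_0>1$ (the correct direction is the opposite one). The genuinely delicate piece is $\sum_i SA_{r_i}b_i$: the functions $A_{r_i}b_i$ no longer have (almost) disjoint supports, so neither the $L^p$ triangle inequality nor any bounded-overlap argument gives the needed $\norm{\sum_i A_{r_i}b_i}_p\lesssim\beta^{1-p_0/p}$. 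The paper handles this by a duality argument: it pairs $\sum_i A_{r_i}b_i$ against $u\in L^{p'}(\MS;E')$, applies (H1) on each annulus $C_j(x_i,r_i)$ with Hölder, recognizes a discretized average as a value of the uncentered Hardy--Littlewood maximal function $\mathcal M(\norm{u}_{E'}^{p'})$, averages over $z_i\in B(x_i,r_i)$, and finally uses the bounded overlap of the $B(x_i,r_i)$'s together with the weak type $(1,1)$ of $\mathcal M$ to produce $\norm{u}_{p'}$. This maximal-function/duality device is the missing technical ingredient in your outline, and it is precisely what allows the off-diagonal decay $\sum_j j^n\om_j<\infty$ to enter in the correct way (as an $\ell^1$, not $\ell^p$, condition on $\om_j$).

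In short: the shape of the argument is right, but you are missing both the rescaled CZ height $\beta=\alpha^{a/p_0}$ and the duality-plus-maximal-function estimate for $\norm{\sum_i A_{r_i}b_i}_p$, and without either the proof does not go through.
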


\noindent This theorem, as well as the following Corollary will be
proved in section~\ref{sec2}.

\begin{remark}\label{remark1-1}
  The choice of annuli with radii
  $C_j(x,r) = B(x, (j+1)r) \setminus B(x,j r)$ is not unique. We could
  also take
  $\widetilde{C}_j(x,r) := B(x, r\, 2^{j+1}) \setminus B(x,r\, 2^j
  )$. In this case, the condition on $\omega_j$ in the theorem becomes
  \( \sum_{j} 2^{n j} \omega_j < \infty \). This latter condition
  is sometimes more flexible than the first one, especially when the
  kernel of the approximation identity $A_r$ does not have an
  exponential decay but a merely a polynomial one.
\end{remark}

The above theorem is also valid on any non-empty open subset $\Omega$
of $\MS$. Note that $(\Omega, d, \mu)$ is not necessarily  a space of
homogeneous type. In the next result, $V(x,r)$ denotes, as before, the
volume of the ball $B(x,r)$ of $\MS$ (and not that of $\Omega$).

\begin{corollary}\label{coro:main}
  Let $(\MS, d, \mu)$ be a space of homogeneous type, and
  $\Omega\not=\emptyset$ be an open subset of $\MS$.  Let
  $(E, \norm{\cdot}_E)$ and $(F, \norm{\cdot}_F)$ be Banach spaces and
  $p_0, p, q, a \in [1, \infty)$ be such that $p_0 < p \le q$ where
  $\frac{1}{p} - \frac{1}{q} = \frac{1}{p_0} - \frac{1}{a}$. Let 
  $T : L^p(\Omega, \mu; E) \to L^q(\Omega, \mu; F)$ be a 
  bounded linear operator and that there exists a bounded linear operator
   $S : L^p(\Omega, \mu; E) \to L^q(\Omega, \mu; F)$ and a family of linear
  operators $(A_r)_{r>0}$ on $L^p(\Omega, \mu; E)$ such that
\begin{aufziH}
\item \label{coro:main-a} for some sequence $(\omega_j)$ of
  non-negative numbers satisfying
  \( \dsp \sum_{j\ge 1} j^\DIM \omega_j < \infty \) and for each
  $f \in L^{p_0}(\Omega; E)$ with bounded support and each ball
  $B(x, r)$ containing its support, we have the off-diagonal bound
  \begin{equation} \label{eq:At-estimate2} \Bigl(\tfrac{1}{V(x,
      (j+1)r)} \int_{\Omega \cap C_j(x, r)} \bignorm{ (A_r f)(y) }_E^p
    \,d\mu(y)\Bigr)^{\frac{1}{p}} \le \frac{\om_j}{V(x,
      r)^{\frac{1}{p_0}}} \; \norm{ f }_{L^{p_0}(\Omega, \mu;E)}.
\end{equation}
\item\label{coro:main-b} there exist  $\delta, W > 0$ such that
  \begin{equation} \label{eq:Hoermander-a-condition2} \Bigl(
    \int_{\Omega \setminus B(x, (1{+}\delta)r)} \bignorm{ (T - SA_r)
      f(y) }_F^a\,d\mu(y)\Bigr)^{\frac{1}{a}} \quad \le \quad W
    \,\norm{ f }_{L^{p_0}(\Omega,\mu;E)}
\end{equation}
for all $x\in \Omega$, $r>0$ and
$f \in L^{p_0}(\Omega,\mu; E)\cap L^\infty(\Omega,\mu; E)$ supported in
$B(x, r)$.
\end{aufziH}
\medskip \noindent
Then $T: L^{p_0}(\Omega, \mu;  E) \to L^{a, \infty}(\Omega, \mu; F)$ is bounded.
\end{corollary}

\begin{remark}\label{rem1} In the case that $F = \RR$
  and $T$ maps into positive functions, linearity of $T$ is not
  important and it can be replaced by the sub-linearity property
  $T(f+g)\le c\, (Tf + Tg)$ for some constant $ c > 0$.
\end{remark}

  The special case that $p_0{=}a{=}1$ is of particular interest, and
  it recovers known results. We state some observations for this case.

  First, Theorem~\ref{thm:main} (or Corollary~\ref{coro:main} for
  domains) gives a weak type $(1,1)$ result in this case. Suppose, in
  addition to the hypotheses of Theorem~\ref{thm:main} that the
  operators $T$ and $SA_r$ are given by (singular) kernels
  $\vec{K}_T(x,y)$ and $\vec{K}_{SA_r}(x,y)$, that is, $\vec{K}_T$ and
  $\vec{K}_{SA_r}$ are strongly measurable functions on $\MS\times\MS$
  with values in $\BOUNDED(E; F)$ and locally integrable on
  $\MS\times\MS \setminus \{(x,x): \; x \in \MS\}$ ensuring that
  \begin{equation}\label{eq:2-2-sing}
    T f(x) := \int_\MS \vec{K}_T(x,y) f(y) \, d\mu(y)
  \end{equation} 
  is well deﬁned as an element of $F$ for all
  $f \in L^\infty(\MS, \mu ; E)$ that have a bounded support and for all
  $x \not\in \supp(f)$ (and similarly for $SA_r$ and
  $\vec{K}_{SA_r}(x,y)$). It is then easy to check that the integral
  condition \ref{coro:main-b} in the previous theorem or in the corollary is
  satisfied if
  \begin{equation}\label{eq:2-5}
  \sup_{y \in \MS, r >0} \;  
  \int_{d(x,y) \ge (1{+}\delta)r} \norm{ \vec{K}_T(x,y) - \vec{K}_{SA_r}(x,y) }_{{\mathcal L}(E,F)}  \, d\mu(x) < \infty. 
\end{equation} 
Therefore, \eqref{eq:2-5} together with the remaining hypothesis from
Theorem~\ref{thm:main} implies that $T$ is bounded from
$L^1(\MS, \mu; E)$ into $L^{1, \infty}(\MS, \mu; F)$. If $E = F = \CC$
and $T = S$, this is the result of \cite{Duong-McIntosh}. A version of
\cite{Duong-McIntosh} with $S \not= T$ appears first in
\cite{Elst-Ouhabaz}, where it was used to study spectral multiplier
type results for degenerate elliptic operators. In these comments,
$\MS$ can be replaced by any non-trivial open subset $\Omega$ of
$\MS$.

 Next, let $T : L^p(\MS, \mu; E) \to L^p(\MS, \mu; F)$ be a bounded
  operator which is given by a (singular) integral
  $\vec{K}_T(x,y)$. Suppose that this kernel satisfies the so-called
  {\it almost $L^1$} condition of Hörmander
\begin{equation}\label{eq:2-6}
  \sup_{y,  y' \in \MS} \int_{d(x,y) \ge (1{+}\delta) d(y,y')}  \norm{ \vec{K}_T(x,y) - \vec{K}_{T}(x,y') }_{{\mathcal L}(E,F)}  \, d\mu(x) < \infty.
  \end{equation}
  Arguing exactly as in \cite{Duong-McIntosh} one proves that
  \eqref{eq:2-6} implies \eqref{eq:2-5} with an appropriate family of operators $(A_r)_{r>0}$. See also
  Proposition~\ref{prop:prop2-1} below for a more general version.
  Therefore, if $T: L^p(\MS, \mu; E) \to L^p(\MS, \mu; F)$ is bounded
  for some fixed $p \in (1, \infty)$ and the kernel $\vec{K}_T(x,y)$
  satisfies \eqref{eq:2-6}, then $T$ is weak type $(1,1)$, i.e.,
  $T: L^1(\MS, \mu; E) \to L^{1, \infty}(\MS, \mu; F)$ is
  bounded. This recovers Theorem~1.1 in Grafakos, Liu and Yang
  \cite{Grafakos-Liu-Yang}.

We extend in the next result the above comments to the case of weak
type $(1, a)$ operators with $a \ge 1$. Let
$T : L^p(\MS, \mu; E) \to L^q(\MS, \mu; F)$ be a bounded operator
which is given by a kernel $\vec{K}_T(x,y)$ in the sense of
\eqref{eq:2-2-sing}.

\begin{proposition}\label{prop:prop2-1}
  Let $a \in [1, \infty)$ and $ \delta > 0$. Consider the following
  properties.
\begin{aufzi}
\item \label{prop2-1-1}(Hörmander condition)
\begin{equation}\label{eq:2-1-Ho}
  \sup_{y,  y' \in \MS} \int_{d(x,y) \ge (1{+}\delta) d(y,y')}  \norm{ \vec{K}_T(x,y) - \vec{K}_{T}(x,y') }_{{\mathcal L}(E,F)}^a  \, d\mu(x) < \infty.
  \end{equation}
\item \label{prop2-1-2} There exists a family $(A_r)_{r>0}$ of linear
  operators on $L^p(\MS, \mu; E)$ which satisfies the off-diagonal
  bound \eqref{eq:At-estimate} and such that
\[
  \sup_{y \in \MS, r >0} \; 
  \int_{d(x,y) \ge (1{+}\delta)r} \norm{ \vec{K}_T(x,y) - \vec{K}_{TA_r}(x,y) }_{{\mathcal L}(E,F)}^a  \, d\mu(x) < \infty. 
\] 
\item \label{prop2-1-3} There exists  a constant $ W > 0$ such that
\[
  \Bigl( \int_{\MS \setminus B(x, (1{+}\delta)r)} \bignorm{ (T - TA_r)    f(y) }_F^a\,d\mu(y) \Bigr)^{\frac{1}{a}} \quad \le \quad W \,\norm{
    f }_{L^{1}(\MS,\mu;E)}
\]
for all $x\in \MS$, $r>0$ and
$f \in L^{1}(\MS,\mu; E)\cap L^\infty(\MS,\mu; E)$ supported in
$B(x, r)$.
\end{aufzi}
Then
$\ref{prop2-1-1} \Rightarrow \ref{prop2-1-2} \Rightarrow
\ref{prop2-1-3}$. In particular, condition \eqref{eq:2-1-Ho} implies
that the operator $T$ is of weak type $(1, a)$.
\end{proposition}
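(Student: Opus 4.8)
The plan is to prove the two implications separately, closely following the argument of Duong--McIntosh \cite{Duong-McIntosh} but keeping track of the exponent $a$. For $\ref{prop2-1-1} \Rightarrow \ref{prop2-1-2}$, I would first choose the family $(A_r)_{r>0}$ to be an approximation of the identity whose kernels $a_r(y,z)$ are supported (or essentially concentrated) in $\{d(y,z) \le r\}$ with the normalisation $\int_\MS a_r(y,z)\,d\mu(z) = 1$, and satisfy the off-diagonal bound \eqref{eq:At-estimate}; such a family exists on any space of homogeneous type (e.g.\ averaging operators over balls, or the dyadic approximations of Coifman--Weiss). The kernel of $TA_r$ is then $\vec K_{TA_r}(x,y) = \int_\MS \vec K_T(x,z)\,a_r(z,y)\,d\mu(z)$, so using $\int a_r(z,y)\,d\mu(z)=1$ we may write
\[
  \vec K_T(x,y) - \vec K_{TA_r}(x,y) = \int_\MS \bigl(\vec K_T(x,y) - \vec K_T(x,z)\bigr)\,a_r(z,y)\,d\mu(z).
\]
Taking $\mathcal L(E,F)$-norms, applying Minkowski's (or Jensen's) inequality in the form appropriate for the $a$-th power, and then integrating over $\{x : d(x,y) \ge (1{+}\delta)r\}$, one reduces to estimating $\int_{d(x,y)\ge(1+\delta)r}\norm{\vec K_T(x,y)-\vec K_T(x,z)}^a\,d\mu(x)$ for $d(y,z)\le r$. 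Since then $d(x,y) \ge (1{+}\delta)r \ge (1{+}\delta)d(y,z)$, this last integral is bounded uniformly by the Hörmander constant in \eqref{eq:2-1-Ho}, which gives \ref{prop2-1-2}. The step that needs a little care is the passage from the pointwise kernel identity to the $L^a$-norm: one applies Jensen's inequality with respect to the probability measure $a_r(z,y)\,d\mu(z)$ to move the power $a$ inside, then Tonelli to swap the $x$- and $z$-integrations, and only afterwards uses the support condition $d(y,z)\le r$ to invoke \eqref{eq:2-1-Ho}.

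For $\ref{prop2-1-2} \Rightarrow \ref{prop2-1-3}$, fix a ball $B(x_0,r)$ and $f \in L^1\cap L^\infty$ supported in it. For $\mu$-a.e.\ $y \notin \supp f$ we have the kernel representations $Tf(y)=\int \vec K_T(y,z)f(z)\,d\mu(z)$ and $TA_rf(y)=\int \vec K_{TA_r}(y,z)f(z)\,d\mu(z)$, hence for $y \in \MS \setminus B(x_0,(1{+}\delta)r)$,
\[
  (T-TA_r)f(y) = \int_{B(x_0,r)} \bigl(\vec K_T(y,z) - \vec K_{TA_r}(y,z)\bigr)\,f(z)\,d\mu(z).
\]
Take $\norm{\cdot}_F$, bound by $\int_{B(x_0,r)}\norm{\vec K_T(y,z)-\vec K_{TA_r}(y,z)}_{\mathcal L(E,F)}\,\norm{f(z)}_E\,d\mu(z)$, then take the $L^a_y$-norm over $\MS\setminus B(x_0,(1{+}\delta)r)$ and apply Minkowski's integral inequality to pull the $z$-integral outside the $L^a$-norm:
\[
  \Bigl(\int_{\MS\setminus B(x_0,(1{+}\delta)r)}\norm{(T-TA_r)f(y)}_F^a\,d\mu(y)\Bigr)^{1/a} \le \int_{B(x_0,r)} \Bigl(\int_{\MS\setminus B(x_0,(1{+}\delta)r)}\norm{\vec K_T(y,z)-\vec K_{TA_r}(y,z)}_{\mathcal L(E,F)}^a\,d\mu(y)\Bigr)^{1/a}\norm{f(z)}_E\,d\mu(z).
\]
For $z \in B(x_0,r)$ we have $B(x_0,(1{+}\delta)r) \subseteq B(z,(2{+}\delta)r)$, so the inner integral is over a set contained in $\{y : d(y,z)\ge (1{+}\delta')r\}$ for a suitable $\delta'>0$ depending only on $\delta$ (one can also simply re-run the argument with $(1{+}\delta)r$ replaced by $(1{+}\delta)r$ centred at $z$, shrinking $\delta$ harmlessly), and the hypothesis of \ref{prop2-1-2} bounds it by the constant there, uniformly in $z$ and $r$. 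This yields $\le W\int_{B(x_0,r)}\norm{f(z)}_E\,d\mu(z) = W\norm{f}_{L^1(\MS,\mu;E)}$, which is \ref{prop2-1-3}.

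Finally, \ref{prop2-1-3} is exactly hypothesis \ref{thm:main-a}(b) of Theorem~\ref{thm:main} with $S=T$, $p_0=1$, and $a$ as given; and taking $p=q=a$ (so that the balance condition $\frac1p-\frac1q=\frac1{p_0}-\frac1a$ reads $0 = 1-\frac1a$ — here one instead takes any $p\le q$ with $\frac1p-\frac1q=1-\frac1a$, e.g.\ $p=1$, $q=\frac{a}{a-1}$ when $a>1$, and uses boundedness of $T:L^p\to L^q$ which is part of the standing assumption on $T$) together with the off-diagonal bound \eqref{eq:At-estimate} assumed in \ref{prop2-1-2}, Theorem~\ref{thm:main} applies and gives that $T:L^1(\MS,\mu;E)\to L^{a,\infty}(\MS,\mu;F)$ is bounded, i.e.\ $T$ is of weak type $(1,a)$. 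The main obstacle is purely bookkeeping: making sure the radius-dilation constants ($\delta$ versus $\delta'$, the $(2{+}\delta)$ factor from re-centring balls) are absorbed consistently, and checking that the assumed $L^p\to L^q$ boundedness of $T$ is compatible with the exponent balance so that Theorem~\ref{thm:main} can legitimately be invoked with $S=T$.
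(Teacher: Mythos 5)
Your proposal follows the paper's proof essentially step for step: for \ref{prop2-1-1} $\Rightarrow$ \ref{prop2-1-2} the paper takes exactly the ball-average $A_r$ with kernel $\vec h_r(z,y) = V(y,r)^{-1}\eins_{B(y,r)}(z)\,I_E$, applies Jensen with respect to the probability measure $\vec h_r(z,y)\,d\mu(z)$ and Fubini, then uses the support condition $d(z,y)\le r$ to invoke \eqref{eq:2-1-Ho}; for \ref{prop2-1-2} $\Rightarrow$ \ref{prop2-1-3} it uses Minkowski's integral inequality in $L^a$ exactly as you describe, and the weak $(1,a)$ conclusion comes from feeding \ref{prop2-1-3} into Theorem~\ref{thm:main} with $S=T$ and $p_0=1$. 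One slip in your closing aside: the balance $\tfrac1p-\tfrac1q=1-\tfrac1a$ is \emph{not} met by $p=1$, $q=\tfrac{a}{a-1}$ (that choice gives $\tfrac1p-\tfrac1q=\tfrac1a$, and $p=1=p_0$ is in any case excluded since Theorem~\ref{thm:main} requires $p_0<p$); what one actually needs is the standing assumption, stated just before the proposition, that $T:L^p\to L^q$ is bounded for exponents $1<p\le q$ satisfying $\tfrac1p-\tfrac1q=1-\tfrac1a$ --- this is not something you get to choose, it is part of the hypotheses, and the ``in particular'' conclusion is only asserted under it.
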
 
Related results to the  fact that  \eqref{eq:2-1-Ho} implies that $T$ is of weak type $(1,1)$  are given in  Theorems 2.1 and 2.2 of H\"ormander
\cite{Hormander60} for convolution operators in the
Euclidean setting.   A variant of these results for vector-valued
kernels can be found in Rozendaal and Veraar \cite[Proposition~5.2]{RozendaalVeraar:2018}.

\bigskip

Our criteria for operators of weak type $(p_0, a)$ can be applied in
several situations. We are particularly interested in the endpoint
$p_0{=}1$ for Riesz potentials, Riesz transform type operators and
spectral multipliers. Let $\sL$ be the generator of a bounded
holomorphic semigroup $(e^{-t\sL})$ on $L^2(\MS)$, or on $L^2(\Omega)$
where $\Omega$ is an open subset of
$\MS$. % as in Corollary~\ref{coro:main}).
We suppose that the semigroup $e^{-t \sL}$ is given by a kernel
$p_t(x,y)$, the heat kernel of $\sL$, which satisfies a Gaussian upper
bound
 \[
   | p_t(x,y) | \le \frac{C}{V(x, t^{\frac{1}{m}})} \exp\big\{-\delta 
   \left(\frac{d(x, y)} {t^{\frac{1}{m}}} \right)^{\frac{m}{m-1}}
   \big\}
\]
for some positive  constants  $C, \delta$ and $m > 1$. Then we prove 
the following result.

\begin{theorem}[Theorem 3.1]\label{thm:1-1}
Suppose that $\sL$ satisfies the Sobolev inequality
\[
  \norm{ u}_{L^{\frac{2D}{D-m}}(\Omega)} \le c \, \norm{
    \sL^{\frac{1}{2}} u }_{L^2(\Omega)} \quad \forall\, u \in
  D(\sL^{\frac{1}{2}})
\]
for some $D > m$ and  $c > 0$. Then the Riesz potential
$\sL^{-\frac{\alpha}{2}}$ is bounded from $L^1(\Omega)$ into
$L^{a, \infty}(\Omega)$ for $a > 1$ such that
$1 - \frac{1}{a} = \frac{m\alpha }{2 D}$.
\end{theorem}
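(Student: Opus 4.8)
Here is a plan to prove Theorem~\ref{thm:1-1} (= Theorem~3.1).

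\medskip

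The plan is to deduce the result from Corollary~\ref{coro:main}, taking $E=F=\RR$, $p_0=1$ and the given exponent $a$, $S=T=\sL^{-\frac{\al}{2}}$, any $p$ in the interval $\bigl(1,\tfrac{2D}{m\al}\bigr)$ — non-empty since $\tfrac{m\al}{2D}=1-\tfrac1a<1$ — together with the $q$ fixed by $\tfrac1p-\tfrac1q=1-\tfrac1a=\tfrac{m\al}{2D}$ (so $1<p<q<\infty$ automatically), and the approximating family
\[
  A_r \;:=\; I-(I-e^{-r^m\sL})^N \;=\; \sum_{k=1}^N \binom{N}{k}(-1)^{k+1}\, e^{-k r^m\sL},
\]
with $N\in\NN$ to be chosen large. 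Each $A_r$ is bounded on $L^p(\Om)$ since the Gaussian bound makes $(e^{-t\sL})$ act boundedly on every $L^p(\Om)$, $1\le p<\infty$. One then has to check three things: (i)~$\sL^{-\frac{\al}{2}}\colon L^p(\Om)\to L^q(\Om)$ is bounded; (ii)~the off-diagonal bound \eqref{eq:At-estimate2} for $A_r$; (iii)~the H\"ormander-type bound \eqref{eq:Hoermander-a-condition2} for $T-SA_r=\sL^{-\frac{\al}{2}}(I-e^{-r^m\sL})^N$, which we verify with $\delta=1$.

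For (i) I would run a Hedberg-type argument. By classical results the Sobolev inequality (in the presence of the Gaussian bound, hence of the boundedness of $(e^{-t\sL})$ on $L^1$ and $L^\infty$) entails the uniform volume lower bound $V(x,r)\gtrsim r^D$ and the ultracontractive estimate $\norm{e^{-t\sL}}_{L^p\to L^\infty}\lesssim t^{-\frac{D}{mp}}$; moreover the Gaussian bound gives $\abs{e^{-t\sL}f(x)}\lesssim Mf(x)$ uniformly in $t>0$, $M$ the Hardy--Littlewood maximal operator. Writing $\sL^{-\frac{\al}{2}}=c_\al\int_0^\infty t^{\frac{\al}{2}-1}e^{-t\sL}\,dt$ and splitting at a point-dependent radius $R=R(x)$ gives $\abs{\sL^{-\frac{\al}{2}}f(x)}\lesssim Mf(x)\,R^{\frac{\al}{2}}+\norm{f}_{L^p}\,R^{\frac{\al}{2}-\frac{D}{mp}}$, where the tail integral converges precisely because $p<\tfrac{2D}{m\al}$; optimising in $R$ yields $\abs{\sL^{-\frac{\al}{2}}f(x)}\lesssim Mf(x)^{1-\frac{mp\al}{2D}}\norm{f}_{L^p}^{\frac{mp\al}{2D}}$, and since the gap relation gives $q\bigl(1-\tfrac{mp\al}{2D}\bigr)=p$, taking the $L^q$ norm and using the $L^p$-boundedness of $M$ ($p>1$) gives $\norm{\sL^{-\frac{\al}{2}}f}_{L^q}\lesssim\norm{f}_{L^p}$.

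Part (ii) follows directly from the Gaussian upper bound. For $f$ supported in $B(x,r)$ and $y\in C_j(x,r)$ one has $d(y,z)\ge(j-1)r$ for $z$ in the support, so the Gaussian factor supplies $e^{-c\,j^{m/(m-1)}}$, while doubling ($V(x,r)\le V(y,(j{+}2)r)\le C_n(j{+}2)^nV(y,r)$) turns $V(y,r)^{-1}$ into $(j{+}2)^nV(x,r)^{-1}$; hence $\abs{A_rf(y)}\lesssim (j{+}2)^n e^{-c\,j^{m/(m-1)}}V(x,r)^{-1}\norm{f}_{L^1}$ pointwise on $C_j(x,r)$, and since $\mu(C_j(x,r))\le V(x,(j{+}1)r)$ this is \eqref{eq:At-estimate2} with $\om_j\sim(j{+}2)^n e^{-c\,j^{m/(m-1)}}$, so that $\sum_j j^n\om_j<\infty$.

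Part (iii) is the main obstacle: one must estimate the kernel of $\sL^{-\frac{\al}{2}}(I-e^{-r^m\sL})^N=c_\al\int_0^\infty t^{\frac{\al}{2}-1}(I-e^{-r^m\sL})^N e^{-t\sL}\,dt$ and then control its $L^a$-norm over the complement of $B(x_0,2r)$. I would split the $t$-integral at $t=r^m$. For $t\ge r^m$, using $I-e^{-r^m\sL}=\sL\int_0^{r^m}e^{-s\sL}\,ds$ repeatedly one writes $(I-e^{-r^m\sL})^N e^{-t\sL}=\int_{[0,r^m]^N}\sL^N e^{-(t+s_1+\cdots+s_N)\sL}\,ds$, and the standard analytic/Gaussian bound on $\sL^N e^{-\tau\sL}$, after integration in $t$ (splitting further at $t=d(x,y)^m$ and using doubling and monotonicity of $V$), gives the kernel bound $\lesssim \frac{d(x,y)^{m\al/2}}{V(x,d(x,y))}\bigl(\tfrac{r}{d(x,y)}\bigr)^{mN}$ for $d(x,y)\ge r$. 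For $t\le r^m$ one uses $\norm{(I-e^{-r^m\sL})^N}\lesssim 1$ and the Gaussian bound to get a kernel $\lesssim \frac{r^{m\al/2}}{V(x,r)}\exp\bigl({-c\,(d(x,y)/r)^{m/(m-1)}}\bigr)$ for $d(x,y)\ge r$. Raising either bound to the power $a$, integrating over $x$ across the annuli $C_j(x_0,r)$ with $y\in B(x_0,r)$, comparing $V(x,\cdot)$ with $V(x_0,r)$ by doubling, and choosing $N$ large enough that $\sum_j j^{(m\al/2-mN)a+n}<\infty$, one reaches in both cases
\[
  \Bigl(\int_{\Om\setminus B(x_0,2r)}\bigabs{(T-SA_r)f(x)}^a\,d\mu(x)\Bigr)^{\frac1a}\;\lesssim\;\Bigl(\tfrac{r^D}{V(x_0,r)}\Bigr)^{\frac{m\al}{2D}}\norm{f}_{L^1}\;\lesssim\;\norm{f}_{L^1},
\]
the last step again by $V(x_0,r)\gtrsim r^D$, which is exactly \eqref{eq:Hoermander-a-condition2} with $W$ independent of $x_0$ and $r$. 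The delicate point is that, at the critical exponent $a$ (where $m\al a/2=D(a-1)$), the $t\ge r^m$ contribution fails to be $L^a$-integrable at spatial infinity by a \emph{logarithmic} amount unless one genuinely exploits the decay $\bigl(r/d(x,y)\bigr)^{mN}$ produced by the factor $(I-e^{-r^m\sL})^N$ — which is why $N$ must be taken large. With (i)--(iii) in hand, Corollary~\ref{coro:main} gives $\sL^{-\frac{\al}{2}}\colon L^1(\Om)\to L^{a,\infty}(\Om)$, as claimed.
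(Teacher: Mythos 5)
Your plan has the right skeleton (invoke Theorem~\ref{thm:main}/Corollary~\ref{coro:main} with $T=S=\sL^{-\alpha/2}$ and a heat-semigroup based approximation $A_r$), but two of your three steps diverge from what the paper does and one of them has a genuine gap.

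The gap is the volume lower bound. In your verification of \eqref{eq:Hoermander-a-condition2} you reach an estimate of the form $(r^D/V(x_0,r))^{m\alpha/(2D)}\norm{f}_{L^1}$ and close the argument by invoking $V(x_0,r)\gtrsim r^D$, which you claim follows from the Sobolev inequality plus the Gaussian upper bound. That implication is not justified: ultracontractivity only gives the upper bound $p_t(x,x)\lesssim t^{-D/m}$, and to convert this into a volume lower bound one needs a lower on-diagonal heat-kernel bound or conservativeness ($\int p_t(x,y)\,d\mu(y)=1$), neither of which is assumed; indeed, for Dirichlet problems on a proper domain $\Omega$, conservativeness fails. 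The paper's remark after Theorem~\ref{thm:3-1} is explicit that \emph{no} upper or lower volume estimate is assumed, and the paper's proof carefully avoids one: its Lemma~\ref{lemma3-1} (hence Lemma~\ref{lemma3-2}) gives the operator-norm bound $\norm{\eins_A e^{-t\sL}\eins_B}_{L^1\to L^p}\lesssim t^{-\frac{D}{m}(1-\frac1p)}e^{-\delta'(d(A,B)/t^{1/m})^{m/(m-1)}}$ purely from doubling, the Gaussian upper bound (for the $L^1\to L^1$ end) and Sobolev-induced ultracontractivity (for the $L^1\to L^\infty$ end), by interpolation. Verifying \eqref{eq:Hoermander-a-condition2} then amounts to integrating this \emph{operator} bound against $|s^{\alpha/2-1}-(s-r^m)^{\alpha/2-1}\eins_{\{s>r^m\}}|\,ds$ and invoking Lemma~\ref{lem:integral}; no volume comparison appears. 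You should replace your pointwise-kernel computation by this operator-norm computation.

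The other differences are not errors but unnecessary complications. For the starting point (i), the paper simply iterates the Sobolev inequality to get $\sL^{-\alpha/2}\colon L^2(\Omega)\to L^p(\Omega)$ with $\frac12-\frac1p=\frac{m\alpha}{2D}$, so $(p,q)=(2,\frac{2D}{D-\alpha m})$; your Hedberg argument works too, but the maximal-function detour is avoidable. For the choice of $A_r$, the paper uses $A_r=e^{-r^m\sL}$ (your $N=1$) and this already suffices: the perceived logarithmic divergence at the critical exponent is an artifact of first passing to a pointwise kernel bound and only then taking the $L^a$ norm; if instead you first compute $\norm{e^{-s\sL}}_{L^1(B(x,r))\to L^a(\Omega\setminus B(x,2r))}\lesssim s^{-\alpha/2}e^{-\delta'(r^m/s)^{1/(m-1)}}$ and then integrate in $s$, the cancellation $|s^{\alpha/2-1}-(s-r^m)^{\alpha/2-1}|\lesssim r^m s^{\alpha/2-2}$ for large $s$ makes the integral converge for every $\alpha>0$ (this is exactly Lemma~\ref{lem:integral}). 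So there is no need to take $N$ large, and doing so only makes the off-diagonal verification of \eqref{eq:At-estimate2} heavier. Your step (ii) is essentially the paper's argument.
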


\noindent The Sobolev inequality follows from the Gaussian bound in the case that  %has a lower bound on volume growth
\[
  V(x, r) \ge c\, r^D \quad \forall\, x \in \MS, \, r > 0.
\]
Indeed, the heat kernel decay
\[
  | p_t(x,y) |  \le C' t^{-\frac{D}{m}} \quad \forall t > 0
\]
is equivalent to the Sobolev inequality, see e.g. Davies
\cite[Theorem~2.4.2]{Davies:book} (note that the sub-Markov
property is not needed). We also refer to Coulhon \cite{Coulhon90}.

\medskip

Theorem~\ref{thm:1-1} is stated for operators with the heat kernel
satisfying a Gaussian upper bound of order $m$. The same proof can
also be used when the heat kernel has only an appropriate polynomial
decay (rather than exponential in the Gaussian case). There are
also examples  of operators for which the Gaussian upper bound is
not valid but the corresponding semigroup satisfies off-diagonal
estimates
\[
     \bignorm{ \eins_{A} \, e^{-t \sL} \, \eins_{B} f }_{L^q(\Omega)} \; \le \;
     C \, t^{\frac{-n}{m}(\frac1p-\frac1q)} \, \exp\big\{-\delta \left(\frac{d(A, B)} {t^{\frac{1}{m}}} \right)^{\frac{m}{m-1}} \big\} \;
     \norm{ f }_{L^p(\Omega)}
\]
for $ p_0 \le p \le q \le p_0'$ with some $p_0 > 1$. In this case, we
obtain the boundedness of the Riesz potential
$\sL^{-\frac{\alpha}{2}}$ from $L^{p_0}(\Omega)$ into
$L^{a, \infty}(\Omega)$ for
$\frac{1}{p_0} - \frac{1}{a} = \frac{m\alpha }{2 D}$. This applies to
second order elliptic operators with complex coefficients, higher
order elliptic operators, and Schr\"odinger operators with inverse
square potentials. Such examples can be found in several articles, see
e.g.  Blunck and Kunstmann \cite{Blunck-Kunstmann}.

\medskip

An interesting consequence of Theorem~\ref{thm:1-1} is that for a
non-negative self-adjoint operator $\sL$, $1 < p \le 2 \le q < \infty$
and $F: (0, \infty) \to \CC$ which has an appropriate decay at
infinity, the operator $F(\sL)$ is bounded from $L^p(\MS)$ into
$L^q(\MS)$.
For the case of the Euclidean Laplacian Hörmander \cite{Hormander60}
established a result involving functions $F$ in a suitable weak
Lebesgue space. In Proposition~\ref{prop:spec} we formulate and prove
a similar statement in our broader setting, albeit under slightly more
restrictive conditions on $F$.

\medskip

Another application of Theorem~\ref{thm:main} leads to the result that
for Laplace-Beltrami operator $\Delta$ on a complete Riemannian
manifold $\MS$, and assuming Gaussian upper bounds, the Riesz
transform type operator $\nabla \Delta^{-\frac{\alpha}{2}}$ is bounded
from $L^1(\MS)$ into $L^{a, \infty}(\MS)$ for
$1 - \frac{1}{p} = \frac{\alpha -1}{D}$, see
Proposition~\ref{prop:3-1} below. If $\alpha = 1$ this is a known
result of Coulhon and Duong \cite{Coulhon-Duong} who proved that the
Riesz transform $\nabla \Delta^{-\frac{1}{2}}$ is of weak type
$(1,1)$. The case $\alpha > 1$ does not seem to follow the natural
composition
$\nabla \Delta^{-\frac{\alpha}{2}} = ( \nabla \Delta^{-\frac{1}{2}})\,
\Delta^{-\frac{\alpha-1}{2}}$ and Theorem~\ref{thm:1-1}.

\medskip

We continue our investigation on endpoint estimates but we wish now to
have operators taking values in $L^a(\MS)$ instead of
$L^{a, \infty}(\MS)$. One has then to start with a suitable subspace
of $L^1(\MS)$ and, not surprisingly, it turns out that the Hardy space
$H^1_\sL$ associated with $\sL$ is an appropriate space.  We prove in
Proposition~\ref{prop:fractional-power-H1} the boundedness of
$\sL^{-\frac{\alpha}{2}}$ from $H^1_\sL$ into $L^a(\MS)$. This can be
compared with a result of Taibleson and Weiss \cite[Theorem~4.1,
p.101]{TaiblesonWeiss} in the Euclidean setting stating that the Riesz
potential is bounded from $H^p(\RR^D)$ to $H^q(\RR^D)$ for all
$0<p<\infty$ and $\tfrac1p - \tfrac1q =
\tfrac{\alpha}{D}$. %PEUT-ON CONSIDERER DANS NOTRE CAS $H^p$ POUR $p < 1$ ???
From the boundedness of $\sL^{-\frac{\alpha}{2}}$ from $H^1_\sL$ into
$L^a(\MS)$ we then infer endpoint results for
$\nabla \Delta^{-\frac{\alpha}{2}}$ in Corollary~\ref{coro:riesz-h1}
and for $F(\sL)$ in Corollary~\ref{coro:spec-h1}. Following Duong and
Yan \cite{DuongYan} for the identification of the dual of $H^1_\sL$ we
finally obtain boundedness of $F(\sL)$ from $L^p(\MS)$ into
$\text{BMO}_\sL$.

\noindent We summarize some of these results in the following theorem.
We refer to Sections~\ref{sec:application:riesz} and ~\ref{section:H1}
for proofs, additional results, and comments.

\begin{theorem}\label{thm:1-2} Suppose that $\sL$ is a non-negative
  self-adjoint operator whose heat kernel has a Gaussian upper bound
  of order $m = 2$. Suppose also that $\sL$ satisfies the Sobolev
  inequality
  \[
    \norm{ u}_{L^{\frac{2D}{D-2}}(\MS)}  \le c \, \norm{ \sL^{\frac{1}{2}} u }_{L^2(\MS)}
\]
for all $u \in D(\sL^{\frac{1}{2}})$ and some $D > 2$ and $c > 0$.  We
have the following assertions.
\begin{aufzi}
\item The Riesz potential $\sL^{-\frac{\alpha}{2}}$ is bounded from
  $H^1_\sL(\MS)$ into $L^a(\MS)$ for
  $1 - \frac{1}{a} = \frac{\alpha }{ D}$.
\item Let $1 < p \le 2 \le q < \infty$ and $r$ such that
  $\frac{1}{r} = \frac{1}{p} -\frac{1}{q}$. Let $F: (0, \infty)$ be
  such that $| F(\lambda) | \le C\, \lambda^{-\frac{D}{2 r}}$ for all
  $\lambda > 0$. Then $F(\sL)$ is bounded from $ L^p(\MS)$ to
  $L^q(\MS)$.
\item Let $q \ge 2$ and $F: (0, \infty)$ such that
  $| F(\lambda) | \le C\, \lambda^{-\frac{D}{2q'}}$ for all
  $\lambda > 0$. Then $F(\sL)$ is bounded from $H^1_\sL$ into
  $L^q(\MS)$.
\end{aufzi}
\end{theorem}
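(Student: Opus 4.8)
The plan is to prove Theorem~\ref{thm:1-2} by assembling its three parts from the general machinery of the paper, since the hypotheses here are exactly a specialization of those appearing earlier.

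\textbf{Part (a).} I would derive this from Theorem~\ref{thm:1-1} together with an upgrade from weak $L^a$ to strong $L^a$ using the Hardy space $H^1_\sL$. Concretely, Theorem~\ref{thm:1-1} (with $m=2$, $\alpha$ chosen so that $1-\frac1a=\frac{\alpha}{D}$) already gives that $\sL^{-\alpha/2}$ maps $L^1(\MS)$ into $L^{a,\infty}(\MS)$. To pass to $L^a$, I would invoke Proposition~\ref{prop:fractional-power-H1}, whose statement the excerpt attributes to this setting: one tests $\sL^{-\alpha/2}$ on $(1,2,\varepsilon)$-atoms $\mathfrak{a}$ associated with $\sL$, writes $\mathfrak{a}=\sL b$ with $b$ supported (up to tails) in the ball $B$ of the atom, splits $\sL^{-\alpha/2}\mathfrak{a}=\sL^{1-\alpha/2}b$, and estimates the $L^a$ norm over annuli $C_j$ around $B$ using the off-diagonal/Gaussian bounds for $\sL^{1-\alpha/2}$ (i.e. $t^{1-\alpha/2}\sL e^{-t\sL}$ integrated in $t$). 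Summing over $j$ with the size and cancellation of the atom gives a uniform bound $\norm{\sL^{-\alpha/2}\mathfrak{a}}_{L^a}\le C$, and then density of finite atomic combinations in $H^1_\sL$ plus the molecular/atomic characterization yields boundedness $H^1_\sL(\MS)\to L^a(\MS)$.

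\textbf{Part (b).} For the $L^p\to L^q$ boundedness of $F(\sL)$ when $|F(\lambda)|\le C\lambda^{-D/(2r)}$ with $\frac1r=\frac1p-\frac1q$, $1<p\le2\le q<\infty$, I would write $F(\sL)=G(\sL)\,\sL^{-D/(2r)}$ where $G(\lambda):=\lambda^{D/(2r)}F(\lambda)$ is bounded on $(0,\infty)$. Since $\sL$ is non-negative self-adjoint, $G(\sL)$ is bounded on $L^2(\MS)$ by the spectral theorem. The key is to factor the exponent: choose $\alpha$ with $\frac{\alpha}{D}=\frac1p-\frac1q=\frac1r$ (so $\alpha=D/r$, and $\frac{m\alpha}{2D}=\frac{\alpha}{D}$ for $m=2$) and use Theorem~\ref{thm:1-1} — actually its $L^{p_0}$-version mentioned after the theorem — to get $\sL^{-\alpha/2}:L^p\to L^{q,\infty}$, then interpolate to recover the strong $L^p\to L^q$ bound on a slightly perturbed exponent, OR, cleaner: apply Theorem~\ref{thm:main} directly with $T=F(\sL)$, $S=G(\sL)$ (bounded $L^2\to L^2$, hence by duality and interpolation $L^p\to L^q$ in the needed range via the off-diagonal estimates), $A_r=e^{-r^2\sL}$, exploiting that $(I-e^{-r^2\sL})\sL^{-D/(2r)}$ has a kernel satisfying the Hörmander-type bound \eqref{eq:2-5} with exponent $a=q$ because of the Gaussian decay. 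I expect the bookkeeping of which operator plays the role of $S$ versus $T$ and verifying hypothesis (b) of Theorem~\ref{thm:main} to be the delicate point here.

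\textbf{Part (c).} This is the $H^1_\sL\to L^q$ endpoint with $|F(\lambda)|\le C\lambda^{-D/(2q')}$, $q\ge2$. I would again write $F(\sL)=G(\sL)\,\sL^{-\alpha/2}$ with $\frac{\alpha}{D}=\frac{1}{q'}=1-\frac1q$, so that $a=q$ in part (a), and $G(\sL)=\sL^{\alpha/2}F(\sL)$ is bounded on $L^2$. By part (a), $\sL^{-\alpha/2}:H^1_\sL\to L^q$. It remains to check that $G(\sL):L^q\to L^q$; since $G$ is a bounded function, $G(\sL)$ is bounded on $L^2$, and boundedness on $L^q$ for $q\ge 2$ follows because the Gaussian heat kernel bound implies $e^{-t\sL}$ is uniformly bounded on all $L^q$ and $G(\sL)$ can be written via the functional calculus as an absolutely convergent integral of $e^{-t\sL}$ against a finite measure (using $|G|\le C$ and standard Laplace-transform representations), or more carefully one uses that $G(\sL)$ preserves $L^q$ because $\sL$ has a bounded $H^\infty$-functional calculus on $L^q$ under Gaussian bounds — a result available in the literature and consistent with the paper's framework. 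Composing the two bounds gives $F(\sL):H^1_\sL\to L^q$.

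\textbf{Main obstacle.} The substantive work is entirely concentrated in Proposition~\ref{prop:fractional-power-H1} behind part (a): proving the uniform $L^a$ estimate on atoms requires carefully handling the tails of the representation $\mathfrak{a}=\sL b$ (the function $b$ is only morally supported on $B$), splitting the $t$-integral defining $\sL^{-\alpha/2}$ into a ``local'' part $t\lesssim r_B^2$ and a ``global'' part $t\gtrsim r_B^2$, and in each range combining the Gaussian off-diagonal decay with the geometric series $\sum_j (2^j r_B)^{D/a-D}\cdot(\text{atom size})$ — the constraint $1-\frac1a=\frac{\alpha}{D}$ is exactly what makes this sum converge. Everything in parts (b) and (c) is then a matter of algebraic factorization of exponents plus citing $L^2$- and $L^q$-boundedness of bounded functions of $\sL$; I would present those two parts as short corollaries of part (a) and of the $L^p$--$L^q$ off-diagonal calculus already developed earlier in the paper.
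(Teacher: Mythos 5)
Your overall plan for part (a) matches the paper's route (Proposition~\ref{prop:fractional-power-H1}), and your reading that the real work is in controlling $\sL^{-\alpha/2}$ on atoms is correct. Two points of precision, though. First, the paper's atoms are $(M,\sL)$-atoms with $b=\sL^M h$ and, crucially, $\supp(\sL^k h)\subset B(x,r)$ \emph{exactly} for $k=0,\dots,M$ — not ``morally supported'' $(1,2,\varepsilon)$-atoms. The exact compact support is used (via Cauchy--Schwarz) to pass from $\norm{h}_{L^2}$ bounds to $\norm{h}_{L^1}\le r^{2M}$ in Step~3, and to reduce the off-diagonal term to Lemma~\ref{lemma3-2}. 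Second, the paper's decomposition is not local/global in $t$ but a three-term splitting: $\sL^{-\alpha/2}b$ near $B(x,2r)$ (handled by Hölder and the already-established $L^2\to L^{2_*}$ bound), $\sL^{-\alpha/2}(I-e^{-r^2\sL})b$ away from $B(x,2r)$ (handled via \eqref{eq:3-2-fr}, \eqref{eq:3-3} and Lemma~\ref{lem:integral}), and the smoothed part $\sL^{-\alpha/2}e^{-r^2\sL}b=\sL^{M-\alpha/2}e^{-r^2\sL}h$ (handled by analyticity and ultracontractivity). So same spirit, slightly different bookkeeping.

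For parts (b) and (c) there is a genuine gap in your one-sided factorization $F(\sL)=G(\sL)\,\sL^{-D/(2r)}$: after mapping $L^p\to L^q$ (or $H^1_\sL\to L^q$) by the fractional power, you still need $G(\sL):L^q\to L^q$, and boundedness of $G$ alone gives this only on $L^2$. You notice this and propose to appeal to a bounded $H^\infty$-functional calculus on $L^q$ under Gaussian bounds, which does work but is a much heavier input than needed. The paper instead uses the \emph{two-sided} factorization $F(\sL)=\sL^{-\alpha/2}\,\widetilde F(\sL)\,\sL^{-\beta/2}$ with $\widetilde F(\lambda)=\lambda^{(\alpha+\beta)/2}F(\lambda)$, choosing $\beta$ so that $\sL^{-\beta/2}:L^p\to L^2$ (resp.\ $H^1_\sL\to L^2$, i.e.\ $\beta=D/2$) and $\alpha$ so that $\sL^{-\alpha/2}:L^2\to L^q$; then $\widetilde F(\sL)$ only needs to act on $L^2$, where it is bounded by the spectral theorem and $|\widetilde F|\le C$. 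This makes (b) and (c) trivial corollaries of Theorem~\ref{thm:3-1} and Proposition~\ref{prop:fractional-power-H1} without any $L^q$-calculus. Also, your ``cleaner alternative'' for (b) — feeding $T=F(\sL)$, $S=G(\sL)$ directly into Theorem~\ref{thm:main} — is circular: that theorem's hypotheses already require $T:L^p\to L^q$ bounded, which is what you are trying to prove.
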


\section{Proofs of the extrapolation results}\label{sec2}

\begin{proof}[Proof of Corollary~\ref{coro:main}]
  Borrowing an argument from \cite{Duong-McIntosh} for weak type
  $(1,1)$ operators, we can infer the Corollary from
  Theorem~\ref{thm:main}.  Indeed, let $\Omega$ be a non-trivial
  open subset of $\MS$.  We then extend all the operators
  $T, S, A_t, SA_r$ by zero outside $\Omega$, that is we consider
  $\widetilde{T} = \eins_\Omega T \eins_\Omega$,
  $\widetilde{S} = \eins_\Omega S \eins_\Omega$ and so on. Here
  $\eins_\Omega$ denotes the indicator function of $\Omega$. Then
  $(\widetilde{A_r})_r$ satisfies \eqref{eq:At-estimate} on $\MS$
  since $(A_r)_r$ satisfies \eqref{eq:At-estimate2} on
  $\Omega$. Condition \eqref{eq:Hoermander-a-condition2} on $\Omega$
  implies \eqref{eq:Hoermander-a-condition} for the extended operators
  on $\MS$. Now by Theorem~\ref{thm:main}, 
  $\widetilde{T} : L^{p_0}(\MS, \mu; E) \to L^{a, \infty}(\MS, \mu;
  F)$ is bounded and hence
  $T: L^{p_0}(\Omega, \mu; E) \to L^{a, \infty}(\Omega, \mu; F)$ is
  bounded as well.
\end{proof}

\begin{proof}[Proof of Theorem~\ref{thm:main}]
  Let $\alpha > 0$ and
  $f \in L^1(\MS, \mu; E) \cap L^{p_0}(\MS, \mu; E)$ with bounded support.
  We have to prove that for some constant $C$ (independent of $\alpha$
  and $f$)
\begin{equation}\label{eq:2-1}
  \mu\left( \bigl\{ x \in \MS:\; \norm{ Tf(x) }_F  > \alpha \bigr\} \right)^{\tfrac{1}a}  \le \frac{C}{\alpha} \norm{ f }_{L^{p_0}(\MS; E)}.
\end{equation} 
Fix $\beta > 0$ and write the Calderón-Zygmund
decomposition\footnote{see for instance {\cite{Grafakos-Liu-Yang}} for
  vector-valued functions when $p_0 = 1$. The case $p_0 > 1$ can be
  treated in a similar way as in the scalar case in
  {\cite{Blunck-Kunstmann}} or {\cite[Theorem~4.3.1,
    Exercise~4.3.8]{Grafakos}}.}  $f= g + b$ with the following
properties:
\begin{aufzii}
\item\label{item:CZ-1} $\norm{ g(x) } \le C \, \beta$ for $\mu-$a.e. $x\in \MS$
\item\label{item:CZ-2} $\dsp b = \sum_{i} b_i$, each $b_i$ is supported in a ball
  $B(x_i, r_i)$ and
  $\norm{ b_i }_{L^{p_0}(\MS;E)} \le C \, \beta \, V(x_i, r_i)^{\tfrac{1}{p_0}}$.
\item\label{item:CZ-3}
  \( \dsp \Bigl(\sum_i V(x_i, r_i) \Bigr)^{\tfrac{1}{p_0}} \le
  \frac{C}{\beta} \norm{ f }_{L^{p_0}(\MS; E)}. \)
\item\label{item:CZ-4} Each $x \in \MS$ is contained in at most $N$ of
  the balls $B(x_i, r_i)$.
\end{aufzii}
The constants $C$ and $N$ are independent of $f$ and $\beta$. We shall
use this decomposition with the choice
$\beta = \alpha^{\tfrac{a}{p_0}}$.

\medskip\noindent We may assume without loss of generality that
$\norm{ f }_{L^{p_0}(\MS; E)} = 1$. Since
\begin{align*}
   \mu\left( \left\{ x \in \MS:\;  \norm{ Tf(x) }_F > \alpha \right\} \right)
 \le & \; \mu\left( \left\{ x \in \MS: \; \norm{ Tg(x) }_F > \tfrac{\alpha}{2} \right\} \right)\\
     & + \mu\left( \left\{ x \in \MS:\;  \norm{ Tb(x) }_F > \tfrac{\alpha}{2} \right\} \right)
\end{align*}
we estimate separately each term on the right hand side. We start with
the ``good'' part. Let us abbreviate for simplicity
$\norm{ h }_r = \norm{ h }_{L^{r}(\MS; E)}$ for
$h \in L^r(\MS, \mu; E)$.  The assumption that $T$ is bounded
from $L^{p}(\MS, \mu; E)$ to $L^{q}(\MS, \mu; F)$ with norm
$\norm{ T }_{p\to q}$ gives that
\begin{align*}
  \mu\left( \left\{ x \in \MS:\;  \norm{ Tg(x) }_F > \tfrac{\alpha}{2} \right\} \right)
 \le& \; \frac{2^q}{\alpha^q} \norm{ T g }_q^q\\
 \le& \; \frac{2^q}{\alpha^q} \norm{ T  }_{p\to q}^q \norm{ g }_p^q\\
 \le& \; \frac{2^q}{\alpha^q} \norm{ T  }_{p\to q}^q \norm{ g }_{p_0}^{\tfrac{ q p_0}{p}} \norm{ g }_{\infty}^{q(1-\tfrac{p_0}{p})}\\
 \le& \; \frac{C_1}{\alpha^q} \norm{ T  }_{p\to q}^q  \norm{ f }_{p_0}^{q \tfrac{p_0}{p}} \beta^{q(1-\tfrac{p_0}{p})},
\end{align*}
where we used \ref{item:CZ-1} from the Calderón-Zygmund decomposition
and the fact that
$\norm{ g }_{p_0} \le \norm{ f }_{p_0} + \norm{ b }_{p_0} \le C'
\norm{ f }_{p_0}$ (which, in turn, follows easily from \ref{item:CZ-2}
and \ref{item:CZ-3}). Recall that $\norm{ f}_{p_0} = 1$ and our choice
$\beta = \alpha^{\tfrac{a}{p_0}}$. The relation
$\frac{1}{p} - \frac{1}{q} = \frac{1}{p_0} - \frac{1}{a}$ allows us to
simplify the last term to
\( \frac{C_1}{\alpha^a}\norm{ T }_{p\to q}^q \), so that
\begin{equation}\label{eq:2-2}
   \mu\left( \left\{ x \in \MS:\;  \norm{ Tg(x) }_F > \tfrac{\alpha}{2} \right\} \right)^{\tfrac{1}{a}}  \le \frac{C_2}{\alpha} \norm{ T  }_{p\to q}^{\tfrac{q}{a}}.
\end{equation}

\medskip\noindent Next, we look at the ``bad'' part, and estimate
$\mu\left( \left\{ x \in \MS:\; \norm{ Tb(x) }_F > \tfrac{\alpha}{2}
  \right\} \right)$.  We further decompose
$Tb = \dsp\sum_i S A_{r_i} b_i + \sum_{i} (T- S A_{r_i}) b_i$ which
leads to
\begin{align*}
           \mu\left( \left\{ x \in \MS:\;  \norm{ Tb(x) }_F > \tfrac{\alpha}{2} \right\} \right)
  \le&  \; \mu\left( \left\{ x \in \MS:\;  \norm{ \sum_i S A_{r_i} b_i  }_F > \frac{\alpha}4 \right\} \right)\\
     &   + \mu\left( \left\{ x \in \MS:\;  \norm{ \sum_{i} (T- S A_{r_i}) b_i }_F > \frac{\alpha}4 \right\} \right).
\end{align*}
We start by estimating $\dsp \norm{ \sum_i A_{r_i} b_i }_E$. To this
end, let $u$ be in the dual space $L^{p'}(\MS, \mu; E')$ and denote
the duality $L^p-L^{p'}$ by $\langle , \rangle_{p,p'}$. Then for
fixed $i \in \NN$, using \eqref{eq:At-estimate}
\begin{align*}
       | \langle A_{r_i} b_i,  u \rangle_{p,p'} |
  \le & \; \sum_j \int_{C_j(x_i,r_i)} \norm{ A_{r_i} b_i (y)}_E \norm{ u(y) }_{E'} \, d\mu(y)\\
\le   & \; \sum_j \left( \int_{C_j(x_i,r_i)} \norm{ A_{r_i} b_i(y) }_E^p \, d\mu(y) \right)^{\tfrac{1}p} \left( \int_{C_j(x_i,r_i)} \norm{ u(y) }_{E'}^{p'} \, d\mu(y) \right)^{\tfrac{1}{p'}}\\
\le   & \; \sum_j \omega_j \frac{V(x_i, (j+1)r_i)^{\tfrac{1}p}}{ V(x_i, r_i)^{\tfrac{1}{p_0}}}  \norm{ b_i }_{p_0} \left( \int_{C_j(x_i,r_i)} \norm{ u(y) }_{E'}^{p'} \, d\mu(y) \right)^{\tfrac{1}{p'}}
\end{align*}
where we used that $b_i$ is supported in $B(x_i, r_i)$. Now we use the
doubling property and \ref{item:CZ-2} from the Calderón-Zygmund
decomposition, which leads, up to some inessential constants
$C_1, C_2, \ldots$ to
\begin{align*}
       | \langle A_{r_i} b_i, u \rangle_{p,p'}|
  \le & \; C_1 \beta  \sum_j \omega_j (j+1)^{\tfrac{\DIM}p} V(x_i, r_i)^{\tfrac{1}{p}} \left( \int_{C_j(x_i,r_i)} \norm{ u(y) }_{E'}^{p'} \, d\mu(y)  \right)^{\tfrac{1}{p'}}\\
   =  & \; C_1 \beta  \sum_j \omega_j (j+1)^{\tfrac{\DIM}p} V(x_i, r_i)^{\tfrac{1}{p}} \, V(x_i, (j+1) r_i)^{\tfrac{1}{p'}} \\
      & \times  \left( \frac{1}{V(x_i, (j+1) r_i)} \int_{C_j(x_i,r_i)} \norm{ u(y) }_{E'}^{p'} \, d\mu(y)  \right)^{\tfrac{1}{p'}}\\
   \le& \; C_2 \beta  \sum_j \omega_j (j+1)^{\DIM} V(x_i, r_i) \left( {\mathcal M}(\norm{ u }_{E'}^{p'}) (z_i) \right)^{\tfrac{1}{p'}},
\end{align*}
where ${\mathcal M}$ is the uncentered Hardy-Littlewood maximal
operator and $z_i \in B(x_i, r_i)$ is arbitrary.  Using the assumption
on $(\omega_j)$ and averaging over $z_i \in B(x_i, r_i)$ yields
\[
  | \langle A_{r_i} b_i, u \rangle_{p,p'}| \le C_3 \beta \int_{B(x_i,    r_i)} \left( {\mathcal M}(\norm{ u }_{E'}^{p'}) (z_i) \right)^{\tfrac{1}{p'}}  \, d\mu(z_i).
\]
We use property \ref{item:CZ-3} and \ref{item:CZ-4} from the
Calderón-Zygmund decomposition and the fact that ${\mathcal M}$ is
of weak type $(1,1)$ to obtain
\begin{align*}
  | \langle \sum_i A_{r_i} b_i, u \rangle_{p,p'}|
  \le& \; C_4 \,\beta \int_{\bigcup_i B(x_i, r_i)} \left( {\mathcal M}(\norm{ u }_{E'}^{p'}) (z_i) \right)^{\tfrac{1}{p'}} \, d\mu(z) \\
  \le& \; C_5 \,\beta \left( \mu\left( \bigcup_i B(x_i,r_i) \right) \right)^{\tfrac{1}{p}} \Bignorm{ \left( {\mathcal M}(\norm{ u }_{E'}^{p'}) \right)^{\tfrac{1}{p'}} }_{L^{p', \infty}(\MS)}\\
   \qquad \le& \; C_6 \,\beta \frac{1}{\beta^{\tfrac{p_0}p}} \norm{ u }_{p'}\\
  =  & \;  C_6 \,\alpha^{\tfrac{a}p_0 (1-\tfrac{p_0}p)} \norm{ u }_{p'} 
  = \; C_6 \,\alpha^{a(\frac{1}{p_0} - \frac{1}{p} )} \norm{ u }_{p'}.
\end{align*}
This being true for each $u \in L^{p'}(\MS, \mu; E')$ with
constants that are independent of $u$, we conclude
\[
  \norm{  \sum_i A_{r_i} b_i }_p \le C_6 \alpha^{a(\frac{1}{p_0} - \frac{1}{p} )}.
\]
Using the  boundedness of  $S: L^p(\MS, \mu; E) \to L^q(\MS, \mu; F)$ we infer
\begin{align*}
  \mu\left( \left\{ x \in \MS:\;  \norm{ \sum_i S A_{r_i} b_i  }_F > \frac{\alpha}4 \right\} \right)
  \le& \; \frac{4^q}{\alpha^q} \norm{ S }_{p\to q}^q  \norm{  \sum_i A_{r_i} b_i }_p^q\\
  \le& \; \frac{C_7}{\alpha^a}  \norm{ S }_{p\to q}^q,
\end{align*}
that is
\begin{equation}\label{eq:2-3}
\mu\left( \left\{ x \in \MS:\;  \norm{ \sum_i S A_{r_i} b_i  }_F > \frac{\alpha}4 \right\} \right)^{\tfrac1a} \le \frac{C_7^{\tfrac1a}}{\alpha} \norm{ S }_{p\to q}^{\tfrac{q}{a}}.
\end{equation}

\medskip\noindent It remains to estimate
$\mu\left( \left\{ x \in \MS:\; \norm{ \sum_{i} (T- S A_{r_i}) b_i }_F >
    \frac{\alpha}{4} \right\} \right)$. At this stage we invoke the
hypothesis \ref{eq:Hoermander-a-condition} of the theorem. We have
\begin{align*}
    & \; \mu\left( \left\{ x \in \MS:\; \norm{ \sum_{i} (T- S A_{r_i}) b_i }_F > \frac{\alpha}{4} \right\} \right) \\
\le & \; \mu\left( \bigcup_i B(x_i, (1{+}\delta)r_i) \right) \\
    &  + \mu\left( \left\{ x \in \MS\setminus \bigcup_i B(x_i, (1{+}\delta)r_i):  \quad \norm{ \sum_{i} (T- S A_{r_i}) b_i }_F > \frac{\alpha}{4} \right\} \right).
\end{align*}
The first term on the right hand side is bounded by
$\dsp C(1{+}\delta)^\DIM \sum_i V(x_i,r_i)$, which in turn is bounded
by $\frac{C'}{\beta^{p_0}} = \frac{C'}{\alpha^{a}}$, using property
\ref{item:CZ-3} of the Calderón-Zygmund decomposition. For the second
term we have by the Kolmogorov inequality,
\begin{align*}
    & \;\mu\left( \left\{ x \in \MS\setminus \bigcup_i B(x_i, (1{+}\delta)r_i): \quad \bignorm{ \sum_{i} (T- S A_{r_i}) b_i }_F > \frac{\alpha}{4} \right\} \right)^{\tfrac1a}\\
\le & \; \frac{4}{\alpha} \left( \int_{\MS \setminus B(x_i, (1{+}\delta)r_i)} \Bignorm{ \sum_i (T - SA_{r_i}) b_i(y) }_F^a\,d\mu(y) \right)^{\tfrac1a}\\
\le & \; \frac{4}{\alpha} \sum_i  \left( \int_{\MS \setminus B(x_i, (1{+}\delta)r_i)} \bignorm{  (T - SA_{r_i}) b_i(y) }_F^a\,d\mu(y) \right)^{\tfrac1a}\\
\le & \; \frac{4 W}{\alpha} \sum_i \norm{ b_i }_{p_0}\\
\le & \; \frac{C_8 W}{\alpha}. 
\end{align*}
Note that we used \ref{item:CZ-2} and \ref{item:CZ-3} from the
Calderón-Zygmund decomposition in the last inequality.  These
estimates lead to the final estimate
\begin{equation}\label{eq:2-4}
\mu\left( \left\{ x \in \MS:\; \norm{ \sum_{i} (T- S A_{r_i}) b_i }_F > \frac{\alpha}{4} \right\} \right)^{\tfrac1a} \le \frac{C' + C_8 W}{\alpha}.
\end{equation}
Putting together \eqref{eq:2-2}, \eqref{eq:2-3} and \eqref{eq:2-4} we
obtain \eqref{eq:2-1} which proves the theorem.
\end{proof}

\begin{proof}[Proof of Proposition~\ref{prop:prop2-1}]
  The statement that condition \eqref{eq:2-1-Ho} implies that
  $T$ is of weak type $(1, a)$ follows from condition~\ref {prop2-1-3}
  and Theorem~\ref{thm:main}.

\smallskip\noindent  We prove that
  $\ref{prop2-1-1} \Rightarrow \ref{prop2-1-2}$ by a similar
  reasoning as in \cite{Duong-McIntosh}. Let
\[
J(x,y) = \norm{ \vec{K}_T(x,y) - \vec{K}_{TA_r}(x,y) }_{{\mathcal L}(E,F)}^a  \quad{\rm and} \quad I = \int_{d(x,y) \ge (1{+}\delta)r}   J(x,y) \, d\mu(x),
\]
where $A_r$ is the operator defined by a kernel
\[
  \vec{h_r}(z,y) = \frac{\eins_{B(y,r)}(z)}{V(y,r)} I_E
\]
in which $I_E$ denotes the identity operator on $E$. Clearly, this kernel has  a Gaussian upper bound and in particular \eqref{eq:At-estimate} holds for $p_0 = 1$. The kernel
$\vec{K}_{TA_r}(x,y)$ is then given by the usual composition
formula. Hence
\begin{align*}
J(x,y) = &  \; \norm{ \int_{d(z,y) \le r} [ \vec{K}_T(x,y) - \vec{K}_{T}(x,z)] \vec{h_r}(z,y)\, d\mu(z) }_{{\mathcal L}(E,F)}^a \\
\le& \; \left( \int_{d(z,y) \le r} \norm{ \vec{K}_T(x,y) - \vec{K}_{T}(x,z)}_{{\mathcal L}(E,F)} \norm{ \vec{h_r}(z,y) }_{{\mathcal L}(E,E)} \, d\mu(z) \right)^a\\
\le& \;  \int_{d(z,y) \le r} \norm{ \vec{K}_T(x,y) - \vec{K}_{T}(x,z)}_{{\mathcal L}(E,F)}^a \norm{ \vec{h_r}(z,y) }_{{\mathcal L}(E,E)}^a \, d\mu(z) V(y,r)^{a-1}.
\end{align*}
Using Fubini and  \ref{prop2-1-1}, we obtain for some constant
$C_1 > 0$, independent of $y, z$ and $r$, that
\begin{align*}
I \le& \; \int_{d(z,y) \le r} \int_{d(x,y) \ge (1+ \delta)d(z,y)}\norm{ \vec{K}_T(x,y) - \vec{K}_{T}(x,z)}_{{\mathcal L}(E,F)}^a\, d\mu(x)\,\\ 
     & \hspace*{2cm} \times  \norm{ \vec{h_r}(z,y) }_{{\mathcal L}(E,E)}^a \, d\mu(z) V(y,r)^{a-1}\\
  \le& \; C_1 \int_{d(z,y) \le r} \norm{ \vec{h_r}(z,y) }_{{\mathcal L}(E,E)}^a \, d\mu(z) V(y,r)^{a-1} = C_1,
\end{align*}
which is \ref{prop2-1-2}. 

\smallskip\noindent Assume now that that \ref{prop2-1-2} is satisfied. Let
$f \in L^{1}(\MS,\mu; E)\cap L^\infty(\MS,\mu; E)$ with support
contained in a ball $B(x, r)$. Then,
\begin{align*}
    &  \; \Bigl( \int_{\MS \setminus B(x, (1{+}\delta)r)} \bignorm{ (T - TA_r) f(y) }_F^a\,d\mu(y)\Bigr)^{\frac{1}{a}}\\
   =&  \; \Bignorm{ \int_\MS  ( \vec{K}_T(y,z) - \vec{K}_{TA_r}(y,z)) f(z)\, d\mu(z)}_{L^a(\MS \setminus B(x, (1{+}\delta)r), \mu ; F)}\\
\le &  \; \int_\MS  \bignorm{ ( \vec{K}_T(y,z) - \vec{K}_{TA_r}(y,z) ) f(z)\, d\mu(z)}_{L^a(\MS \setminus B(x, (1{+}\delta)r), \mu ;  F)}\\
\le&   \; \int_{\MS} \left( \int_{\MS \setminus B(x, (1{+}\delta)r)} \norm{\vec{K}_T(y,z) - \vec{K}_{TA_r}(y,z)}_{{\mathcal L}(E,F)}^a\, d\mu(y) \right)^{\tfrac{1}a} \norm{f(z)}_{F}\, d\mu(z)\\
\le&   \; C \int_\MS \norm{f(z)}_{F}\, d\mu(z) = C \norm{f}_{L^1(\MS, \mu;  F)}.
\end{align*}
This proves  property \ref{prop2-1-3}. 
\end{proof}

%%%%%
\section{Applications}\label{sec:application:riesz}
In this section we illustrate our main results by applications to
Riesz potentials, Riesz transform type operators and $L^p-L^q$ bounds
of spectral multipliers.

In the sequel we work for simplicity with Gaussian bounds, but we
mention that a polynomial decay of the heat kernel of high enough
order would suffice. 

\subsection{Riesz potentials}

Let $(\MS, \mu, d)$ be a space of homogeneous type and $\Omega$ a
non-trivial open subset of $\MS$.  Let $\sL$ be the generator of a
bounded holomorphic semigroup $(e^{-t\sL})$ on $L^2(\Omega,
\mu)$. Suppose that $e^{-t\sL}$ is given by a kernel $p_t(x,y)$, the
heat kernel of $\sL$, that is supposed to satisfy a Gaussian upper
bound of order $m > 1$,
\begin{equation}\label{eq:3-1}
  | p_t(x,y) | \le \frac{C}{V(x, t^{\frac{1}{m}})} \exp\big\{-\delta \left(\frac{d(x, y)} {t^{\frac{1}{m}}} \right)^{\frac{m}{m-1}} \big\} %% Bernhard: c en delta
\end{equation}
for $x, y \in \Omega$ and $t > 0$. Here $C, \, \delta > 0$ are
constants. Using the doubling property we can replace
$V(x, t^{\frac{1}{m}})$ by $V(y, t^{\frac{1}{m}})$ at the expense of
changing the constant $\delta$.

Such Gaussian upper bounds are typical for elliptic operators of order
$m$ with $m \ge 2$. They are also satisfied for the Laplacian on some
fractals with a constant $m > 2$, called the walk dimension of the
fractal, see e.g. \cite{BarlowBass,Grigoryan-Hu-Lau}.

\begin{theorem}\label{thm:3-1}
  Suppose the Gaussian upper bound \eqref{eq:3-1}. 
Suppose that $\sL$ satisfies the Sobolev inequality 
\begin{equation}\label{eq:sob}
 \norm{ u}_{L^{\frac{2D}{D-m}}(\Omega)}  \le c \, \norm{ \sL^{\frac{1}{2}} u }_{L^2(\Omega)}
\end{equation}
for all $u \in D(\sL^{\frac{1}{2}})$ where $D > m$ and
$c > 0$ are constants. Let $\alpha > 0$. Then the Riesz potential
$\sL^{-\frac{\alpha}{2}}$ is bounded from $L^1(\Omega)$ into
$L^{a, \infty}(\Omega)$ for $a > 1$ that is defined by
$1 - \frac{1}{a} = \frac{m\alpha }{2 D}$. The Riesz potential is also
bounded from $L^p(\Omega)$ into $L^q(\Omega)$ for $1 < p  \le 2$ and 
$\frac{1}{p} - \frac{1}{q} = \frac{m \alpha }{2 D}$. If in addition the adjoint operator $\sL^*$ satisfies the same Sobolev inequality then $\sL^{-\frac{\alpha}{2}}$ is bounded from 
$L^p(\Omega)$ into $L^q(\Omega)$ for all  $1 < p < q < \infty$
with $\frac{1}{p} - \frac{1}{q} = \frac{m \alpha }{2 D}$.
\end{theorem}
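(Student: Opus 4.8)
The strategy is to verify the hypotheses of Theorem~\ref{thm:main} (or rather Corollary~\ref{coro:main}, since we work on $\Omega$) with the choice $p_0 = 1$, the operator $T = S = \sL^{-\frac{\alpha}{2}}$, and the approximation family $A_r = e^{-r^m \sL}$. The relation $\frac{1}{p_0} - \frac{1}{a} = 1 - \frac{1}{a} = \frac{m\alpha}{2D}$ fixes $a$, and to apply the abstract theorem we must also find $1 < p \le q < \infty$ with $\frac{1}{p} - \frac{1}{q} = \frac{m\alpha}{2D}$ such that $\sL^{-\frac{\alpha}{2}} : L^p(\Omega) \to L^q(\Omega)$ is bounded. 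This boundedness is where the Sobolev inequality \eqref{eq:sob} enters: it is equivalent to the heat-kernel decay $\|e^{-t\sL}\|_{1\to\infty} \le C t^{-D/m}$, which by interpolation with the $L^2$ bound and analyticity gives the full scale of $L^p\to L^q$ smoothing estimates for $e^{-t\sL}$, and then the subordination formula $\sL^{-\frac{\alpha}{2}} = \frac{1}{\Gamma(\alpha/2)} \int_0^\infty t^{\frac{\alpha}{2} - 1} e^{-t\sL}\, dt$ together with these smoothing bounds yields $\sL^{-\frac{\alpha}{2}} : L^p \to L^q$ for the appropriate exponents (a routine computation splitting the $t$-integral, convergent precisely because $\frac{1}{p} - \frac{1}{q} = \frac{m\alpha}{2D}$ and $p < q$).

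Next I would verify hypothesis \ref{coro:main-a}, the off-diagonal bound for $A_r = e^{-r^m\sL}$. For $f$ supported in a ball $B(x,r)$, the Gaussian bound \eqref{eq:3-1} gives, on the annulus $C_j(x,r)$ where $d(\cdot, \supp f) \gtrsim jr$, a pointwise estimate $|A_r f(y)| \lesssim \frac{1}{V(x,r)} e^{-c j^{m/(m-1)}} \|f\|_1$ after using the doubling property to compare volumes. Raising to the power $p$, integrating over $C_j(x,r) \subset B(x,(j+1)r)$, dividing by $V(x,(j+1)r)$, and using $V(x,(j+1)r) \le C_n (j+1)^n V(x,r)$, one obtains \eqref{eq:At-estimate2} with $\omega_j = C (j+1)^{n/p} e^{-c j^{m/(m-1)}}$, whose super-exponential decay clearly ensures $\sum_j j^n \omega_j < \infty$. (Here the normalization $V(x,r)^{-1/p_0} = V(x,r)^{-1}$ matches because $\|f\|_1 \cdot V(x,r)^{-1}$ is exactly the factor produced.)

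The remaining and most delicate step is hypothesis \ref{eq:Hoermander-a-condition2}: the Hörmander-type bound
\[
  \Bigl( \int_{\Omega \setminus B(x,(1+\delta)r)} \bignorm{ (T - T A_r) f(y) }^a \, d\mu(y) \Bigr)^{1/a} \le W \, \|f\|_{L^1(\Omega)}
\]
for $f$ supported in $B(x,r)$. Writing $T - TA_r = \sL^{-\alpha/2}(I - e^{-r^m\sL}) = \frac{1}{\Gamma(\alpha/2)} \int_0^\infty t^{\alpha/2 - 1} (e^{-t\sL} - e^{-(t+r^m)\sL})\, dt$, one sees that the kernel of $T - TA_r$ is controlled by an integral of heat kernels at times comparable to $t$ (and to $t + r^m$), so the task reduces to estimating, for $y$ at distance $\ge (1+\delta)r$ from $\supp f$ and $z \in \supp f$, the quantity $\int_0^\infty t^{\alpha/2 - 1} |p_t(y,z) - p_{t+r^m}(y,z)|\, dt$ and then taking its $L^a(d\mu(y))$ norm. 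For $t \gtrsim r^m$ one uses the difference structure to gain a factor $r^m/t$ (via $\int_t^{t+r^m} \partial_s p_s\, ds$ and a gradient-of-heat-semigroup bound, or more simply by interpolating $\|e^{-s\sL} - e^{-s'\sL}\|$ estimates), while for $t \lesssim r^m$ one uses the Gaussian off-diagonal decay in $d(y,z)/t^{1/m}$ directly; in both regimes the Gaussian exponential localizes the $y$-integral to an annulus around $\supp f$ of comparable volume, and the $t$-integral converges because of the exponent $1 - 1/a = m\alpha/(2D)$. I expect the bookkeeping of the two time regimes and the careful use of doubling to pass from $V(z, t^{1/m})$ factors to $V(x,r)$ factors (which must ultimately cancel against the measure of the annulus to leave a bare $\|f\|_1$) to be the genuine obstacle; the endpoint case $a$ finite but $> 1$ means one cannot simply use the crude $L^1 \to L^\infty$ bound and must track the $L^a$ norm quantitatively. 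Once \ref{coro:main-a} and \ref{eq:Hoermander-a-condition2} are checked, Corollary~\ref{coro:main} immediately yields $\sL^{-\alpha/2} : L^1(\Omega) \to L^{a,\infty}(\Omega)$, and the strong-type $L^p \to L^q$ statement for $1 < p < q < \infty$ was already established along the way from the heat-kernel smoothing and subordination.
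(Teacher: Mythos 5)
Your proposal follows the paper's overall structure quite closely: same choice $A_r = e^{-r^m\sL}$, same use of the Sobolev inequality to obtain a bounded $\sL^{-\alpha/2}: L^p \to L^q$ starting point for Theorem~\ref{thm:main}, and essentially the same verification of hypothesis \ref{coro:main-a} via pointwise Gaussian decay of the kernel of $\eins_{C_j(x,r)}e^{-r^m\sL}\eins_{B(x,r)}$ together with interpolation.

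Where you diverge — and where your route is markedly heavier — is in the verification of the Hörmander-type condition \eqref{eq:Hoermander-a-condition2}. You propose to estimate the kernel of $T - TA_r$ pointwise via $\int_0^\infty t^{\alpha/2-1}\bigl|p_t(y,z)-p_{t+r^m}(y,z)\bigr|\,dt$ and split into regimes $t\lesssim r^m$ (where the Gaussian off-diagonal decay carries the estimate) and $t\gtrsim r^m$ (where you extract a gain $r^m/t$ from $\int_t^{t+r^m}\partial_s p_s\,ds$). This is viable, but it needs an additional ingredient you don't have for free: a pointwise Gaussian-type bound on $\partial_s p_s(y,z)=-\sL e^{-s\sL}$, which one must derive from analyticity and the Gaussian bound via a Cauchy-integral argument. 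The paper sidesteps both the two-regime split and the time-derivative estimate by performing the substitution $s\mapsto s-r^m$ in the second half of the subordination integral, yielding
\[
 (\sL^{-\frac{\alpha}{2}} - \sL^{-\frac{\alpha}{2}}e^{-r^m\sL})f
 = \tfrac{1}{\Gamma(\alpha/2)}\int_0^\infty \bigl[s^{\frac{\alpha}{2}-1} - (s-r^m)^{\frac{\alpha}{2}-1}\eins_{\{s>r^m\}}\bigr]\, e^{-s\sL}f\,ds,
\]
so that the difference lives entirely in the scalar weight, not in the semigroup. It then applies the $L^1(B(x,r))\to L^a(\Omega\setminus B(x,2r))$ off-diagonal operator bound (Lemma~\ref{lemma3-2}) inside the integral and finishes with the elementary uniform-integral Lemma~\ref{lem:integral}. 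The operator-level estimate also avoids your worry about tracking $V(z,t^{1/m})$ factors, since they cancel already in Lemma~\ref{lemma3-1}. Both routes lead to the result; the paper's is shorter, needs no heat-kernel time-derivative estimates, and is arguably the idea you should adopt if writing this out in full.
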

%Before we give the proof we make some comments. 

\begin{remark}
\begin{aufzi} 
\item Note that the Sobolev inequality implies that $\sL^{\frac{1}{2}}$ is injective and so is the operator $\sL$ as well.
\item Suppose that $\MS = \RR^n$ endowed with the usual distance and
  Lebesgue measure.

  \smallskip\noindent Let either $\sL = - \text{div}(A(x) \nabla \cdot)$
  where the matrix $A$ has bounded real entries and is
  elliptic or let $\sL$ be the Schr\"odinger operator
  $\sL = \Delta + V$, were $\Delta$ is the non-negative Laplacian and
  $0 \le V \in L^1_{loc}(\RR^n)$. Then $\sL$ has a heat kernel which
  satisfies the Gaussian bound
 \[
    | p_t(x,y) | \le C t^{-\frac{n}{2}} \exp\big\{-\delta \frac{|x- y|^2} {t} \big\}.  %% Bernhard: c en delta
 \]
 Hence \eqref{eq:3-1} is satisfied with $m = 2$ and the Sobolev
 inequality \eqref{eq:sob} holds with $D = n$ (for $n > 2$).
 Consequently, $\frac{m\alpha }{2 D} = \frac{\alpha }{D}$ and so the
 theorem says that $\sL^{-\frac{\alpha}{2}}$ is bounded from
 $L^1(\RR^D)$ into $L^{a, \infty}(\RR^D)$ for
 $1 - \frac{1}{a} = \frac{\alpha }{D}$. The same statement is valid on
 any nontrivial open subset $\Omega$, when $\sL$ subject to Dirichlet
 boundary conditions. Our condition that
 $1 - \frac{1}{a} = \frac{m\alpha }{2 D}$ coincides then with the
 usual condition for Riesz potentials on $\RR^D$ or on domains of
 $\RR^D$.

 \smallskip\noindent Let $\sL$ be a higher order elliptic operator of
 order $m \in 2 \NN$ whose heat kernel satisfies
\[
 | p_t(x,y) | \le C t^{-\frac{n}{m}} \exp\big\{-\delta \left(\frac{|x-y|} {t^{\frac{1}{m}}} \right)^{\frac{m}{m-1}} \big\}.  %% Bernhard: c en delta
 \]
 Then the Riesz potential $\sL^{-\frac{\alpha}{2}}$ is bounded from
 $L^1(\Omega)$ into $L^{a, \infty}(\Omega)$ provided $a$ satisfies
 $1 - \frac{1}{a} = \frac{m\alpha }{2 D}$. The boundedness from
 $L^p(\Omega)$ into $L^q(\Omega)$ for $1< p < q < \infty$ with
 $\frac{1}{p} - \frac{1}{q} = \frac{m \alpha }{2 D}$ is obtained by
 the Marcinkiewicz interpolation theorem and it is consistent with the
 standard Sobolev embeddings.

\item Suppose that the volume $V(x,r)$ allows a polynomial lower bound
\begin{equation}\label{eq:3-1-vol}
  V(x, r) \ge c\,  r^D \quad \forall\, x \in \MS, \, r > 0.
\end{equation} 
It follows from the formula
\begin{equation}\label{eq:3-2-fr}
  \sL^{-\frac{\alpha}{2}} f = \tfrac1{\Gamma(\frac{\alpha}{2})} \int_0^\infty t^{\frac{\alpha}{2}-1} e^{-t \sL} f \, dt
\end{equation}
and the Gaussian bound that $\sL^{-\frac{\alpha}{2}}$ has a kernel
$k(x,y)$ which satisfies
\[
  | k(x,y) | \le C\, \int_0^\infty t^{\frac{\alpha}{2}-\frac{D}{m}}  \, \exp\big\{-\delta \left(\frac{d(x, y)} {t^{\frac{1}{m}}} \right)^{\frac{m}{m-1}} \big\} \frac{dt}t.  %% Bernhard: c en delta
\]
The change of variable $t = ( \frac{d(x,y)}{s})^m$ gives the estimate
\[ 
  | k(x,y)| \le \frac{C'}{ d(x,y)^{D - \frac{m \alpha}{2}}}.
\]
If, in addition, the volume has the polynomial growth
$V(x, r) \le C r^\beta$, then the conclusion of the theorem follows
from \cite{GarciaCuerva-Gatto}.  Observe that in
  Theorem~\ref{thm:3-1} we do not assume any upper or lower estimate
  for the volume. It would be interesting to find situations where
  \eqref{eq:sob} holds without such geometrical volume assumptions.
  Our statement reveals that the needed hypothesis is of a
  functional analytic nature more than a geometric one.

\end{aufzi}
\end{remark} 

\noindent Before we give the proof of Theorem~\ref{thm:3-1} we need the
following lemmata.

\begin{lemma}\label{lemma3-1}
  Let $ p \in (1, \infty)$. Under the assumptions of
  Theorem~\ref{thm:3-1}, there exist positive constants $C$ and
  $\delta'$ such that, for measurable subsets $A$ and $B$ of $\Omega$,
\[
\bignorm{\eins_{A} e^{-t\sL} \eins_B}_{{\mathcal L}(L^1(\Omega), \,L^p(\Omega))} \le 
C\,  t^{-\frac{D}{m}(1-\frac{1}{p})} \exp\big\{-\delta' \left(\frac{d(A, B)} {t^{\frac{1}{m}}} \right)^{\frac{m}{m-1}} \big\}.
\]
\end{lemma}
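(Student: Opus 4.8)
The plan is to pass from the operator bound to a pointwise bound on the heat kernel and then to integrate. Fix $p\in(1,\infty)$ and take $f\in L^1(\Omega)$ supported in $B$. Since $(\eins_A e^{-t\sL}\eins_B f)(x)=\eins_A(x)\int_B p_t(x,y)f(y)\,d\mu(y)$, Minkowski's integral inequality gives
\begin{align*}
  \norm{\eins_A e^{-t\sL}\eins_B f}_{L^p(\Omega)}
  &\le \int_B \Bigl(\int_A |p_t(x,y)|^p\, d\mu(x)\Bigr)^{\frac1p}\, |f(y)|\, d\mu(y)\\
  &\le \norm{f}_{L^1(\Omega)}\,\sup_{y\in B}\Bigl(\int_A |p_t(x,y)|^p\, d\mu(x)\Bigr)^{\frac1p},
\end{align*}
so it suffices to bound the inner quantity, uniformly for $y\in B$, by $C\,t^{-\frac{D}{m}(1-\frac1p)}\exp\{-\delta'(d(A,B)/t^{1/m})^{m/(m-1)}\}$.

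The key observation is that under the hypotheses of Theorem~\ref{thm:3-1} the heat kernel obeys two pointwise bounds at once: the Gaussian bound \eqref{eq:3-1}, and the on-diagonal bound $|p_t(x,y)|\le C\,t^{-D/m}$, which is equivalent to the Sobolev inequality \eqref{eq:sob} by Davies~\cite[Theorem~2.4.2]{Davies:book}. Writing $|p_t(x,y)|^p=|p_t(x,y)|^{p-1}\,|p_t(x,y)|$ and applying the on-diagonal bound to the first factor and the Gaussian bound to the second, I would obtain
\[
  |p_t(x,y)|^p \le C\,t^{-\frac{D}{m}(p-1)}\,\frac{1}{V(x,t^{1/m})}\,\exp\bigl\{-\delta\,(d(x,y)/t^{1/m})^{m/(m-1)}\bigr\}.
\]
For $x\in A$, $y\in B$ one has $d(x,y)\ge d(A,B)$; splitting $\delta(\cdot)^{m/(m-1)}=\tfrac\delta2(\cdot)^{m/(m-1)}+\tfrac\delta2(\cdot)^{m/(m-1)}$ and estimating $d(x,y)$ from below by $d(A,B)$ in one half pulls the factor $\exp\{-\tfrac\delta2(d(A,B)/t^{1/m})^{m/(m-1)}\}$ out of the integral over $x$; after taking the $p$-th root this produces the asserted exponential decay, with $\delta'=\delta/(2p)$.

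It then remains to check the uniform bound
\[
  \int_\Omega \frac{1}{V(x,t^{1/m})}\,\exp\bigl\{-\tfrac\delta2\,(d(x,y)/t^{1/m})^{m/(m-1)}\bigr\}\, d\mu(x)\le C,
\]
which is the routine computation and the only place the doubling property enters. I would decompose $\Omega$ along the annuli $C_j(y,t^{1/m})$: on $C_j$ one has $d(x,y)\ge j\,t^{1/m}$, hence (since $\tfrac{m}{m-1}>1$) $(d(x,y)/t^{1/m})^{m/(m-1)}\ge j$, so the exponential is at most $e^{-\delta j/2}$; meanwhile $d(x,y)<(j+1)t^{1/m}$ forces $B(y,t^{1/m})\subset B(x,(j+2)t^{1/m})$, so \eqref{eq:gonflement-lambda} gives $V(x,t^{1/m})^{-1}\le C_n(j+2)^n V(y,t^{1/m})^{-1}$, and again by \eqref{eq:gonflement-lambda} one has $\mu(C_j(y,t^{1/m}))\le V(y,(j+1)t^{1/m})\le C_n(j+1)^n V(y,t^{1/m})$. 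Each annular integral is therefore $\lesssim (j+1)^{2n}e^{-\delta j/2}$, and the sum over $j\ge0$ converges. Combining the three displays and taking the $\tfrac1p$-th power yields the required bound on $\bigl(\int_A |p_t(x,y)|^p\,d\mu(x)\bigr)^{1/p}$, and hence the lemma.

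The proof has no genuine obstacle; the only point requiring care is the exponent bookkeeping — ensuring that it is the power $p-1$ of the ultracontractive estimate that is spent, so that the final $\tfrac1p$-th root leaves exactly $t^{-\frac{D}{m}(1-\frac1p)}$, while the Gaussian factor is preserved intact to generate the off-diagonal decay.
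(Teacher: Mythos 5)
Your proof is correct, and it takes a genuinely different route from the paper's. The paper first derives two operator estimates, an $L^1\to L^1$ bound with the Gaussian decay factor (by integrating the kernel against the bound \eqref{eq:uniform}) and an $L^1\to L^\infty$ bound $C\,t^{-D/m}$ with no decay factor (from the Sobolev inequality), and then invokes interpolation between the two. You instead stay entirely at the kernel level: you reduce the $L^1\to L^p$ operator norm to $\sup_{y}\bigl(\int_A|p_t(x,y)|^p\,d\mu(x)\bigr)^{1/p}$ via Minkowski, and then interpolate the two pointwise bounds on $p_t$ by hand, spending the ultracontractive factor on $|p_t|^{p-1}$ and the Gaussian on the remaining $|p_t|$. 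This is exactly what the paper's interpolation does, but unrolled explicitly; it buys you a self-contained argument that avoids any black-box appeal to Riesz--Thorin and makes the resulting decay constant $\delta' = \delta/(2p)$ explicit, at the cost of the slightly longer annular computation showing $\int_\Omega V(x,t^{1/m})^{-1}\exp\{-\tfrac{\delta}{2}(d(x,y)/t^{1/m})^{m/(m-1)}\}\,d\mu(x)\le C$, which is the analogue of \eqref{eq:uniform} adapted to the $V(x,\cdot)$ rather than $V(y,\cdot)$ normalization. The exponent bookkeeping is correct: $\frac{p-1}{p}=1-\frac1p$ appears after the $\frac1p$-th root, matching the statement.
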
 
\begin{proof} The operator $\eins_{A} e^{-t\sL} \eins_B$ is given by
  the kernel $K_t(x,y) = \eins_{A}(x) \eins_B(y) p_t(x,y)$. Hence
\begin{align*}
{}& \hspace{-1,5cm} \int_\Omega |K_t(x,y) |\, d\mu(x)\\
 \le& \; \frac{C}{V(y,t^\frac{1}{m})}\, \int_\Omega  \eins_{A}(x) \eins_B(y)\, \exp\big\{-\delta \left(\frac{d(x, y)} {t^{\frac{1}{m}}} \right)^{\frac{m}{m-1}} \big\}\, d\mu(x)\\
\le& \; \frac{C}{V(y,t^\frac{1}{m})}\, \exp\big\{-\frac{\delta}{2} \left(\frac{d(A, B)} {t^{\frac{1}{m}}} \right)^{\frac{m}{m-1}} \big\}
\int_\Omega \exp\big\{-\frac{\delta}{2}  \left(\frac{d(x, y)} {t^{\frac{1}{m}}} \right)^{\frac{m}{m-1}} \big\}\, d\mu(x)\\
\le& \; C'\, \exp\big\{-\frac{\delta}{2} \left(\frac{d(A, B)} {t^{\frac{1}{m}}} \right)^{\frac{m}{m-1}} \big\}.
\end{align*}
Note that we use here
\begin{equation}\label{eq:uniform}
\int_\Omega \exp\big\{-\frac{\delta}2 \left(\frac{d(x, y)} {t^{\frac{1}{m}}} \right)^{\frac{m}{m-1}} \big\}\, d\mu(x) \le C\, V(y,t^\frac{1}{m})
\end{equation} 
which follows easily by covering $\Omega$ with annuli $C(y, k)$ and using
the doubling property \eqref{eq:gonflement-lambda}.

\smallskip \noindent The above estimate for the $L^1$-norm of the
kernel $K_t(x,y)$ can be rephrased as
\begin{equation}\label{eq:norm1-1}
\norm{\eins_{A} e^{-t\sL} \eins_B}_{{\mathcal L}(L^1(\Omega))} \le 
C'\,   \exp\big\{-\frac{\delta}{2} \left(\frac{d(A, B)} {t^{\frac{1}{m}}} \right)^{\frac{m}{m-1}} \big\}.
\end{equation} 
On the other hand, the Sobolev inequality \eqref{eq:sob} and the fact
that the semigroup $e^{-t\sL}$ is uniformly bounded on $L^1(\Omega)$
and on $L^\infty(\Omega)$ (which both follow from \eqref{eq:uniform}),
the semigroup $e^{-t\sL} $ maps $L^1(\Omega)$ into $L^\infty(\Omega)$
with a norm that is controlled by $C''\, t^{-\frac{D}{m}}$, see
e.g. \cite[Theorem~2.4.2]{Davies:book} or \cite{Coulhon90}.
Therefore,
\begin{equation}\label{eq:norm1-inf}
\norm{\eins_{A} e^{-t\sL} \eins_B}_{{\mathcal L}(L^1(\Omega)), \,L^\infty(\Omega))} \le 
C''\,  t^{-\frac{D}{m}} \quad \forall\,  t > 0.
\end{equation} 
Now for $p \in (1, \infty)$ we use \eqref{eq:norm1-1},
\eqref{eq:norm1-inf} and interpolation to obtain the
lemma.
\end{proof}

\begin{lemma}\label{lemma3-2}
  Let $ p \in (1, \infty)$.  Under the assumptions of
  Theorem~\ref{thm:3-1} there exist positive constants $C$ and
  $\delta'$ such that, for every $f \in L^1(\Omega)$ supported in a
  ball $B(x, r)$,
\[
  \norm{e^{-t \sL} f }_{L^p(\Omega \setminus B(x, 2r))}
  \le
  C\,  t^{-\frac{D}{m}(1-\frac{1}{p})} e^{- \delta'   (\frac{r^m}{t})^{\frac{1}{m-1}}} \, \norm{ f }_{L^1(\Omega)}.
\]
\end{lemma}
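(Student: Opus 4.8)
The plan is to deduce Lemma~\ref{lemma3-2} from the off-diagonal bound of Lemma~\ref{lemma3-1} by decomposing the exterior region $\Omega\setminus B(x,2r)$ into the annuli $C_j(x,r)$ with $j\ge 2$ and summing. Throughout, write $\gamma=\tfrac{m}{m-1}$ and $s=r\,t^{-1/m}$, so that $(r^m/t)^{1/(m-1)}=s^\gamma$ and the target inequality reads $\norm{e^{-t\sL}f}_{L^p(\Omega\setminus B(x,2r))}\le C\,t^{-\frac Dm(1-\frac1p)}e^{-\delta' s^\gamma}\norm{f}_{L^1(\Omega)}$.

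First I would observe that, since $f$ is supported in $B(x,r)$ and $\Omega\setminus B(x,2r)$ is the disjoint union $\bigcup_{j\ge 2}\bigl(\Omega\cap C_j(x,r)\bigr)$,
\[
\norm{e^{-t\sL}f}_{L^p(\Omega\setminus B(x,2r))}^p=\sum_{j\ge 2}\bignorm{\eins_{C_j(x,r)}\,e^{-t\sL}\,\eins_{B(x,r)}f}_{L^p(\Omega)}^p .
\]
For $y\in C_j(x,r)$ and $z\in B(x,r)$ the triangle inequality gives $d(y,z)\ge d(x,y)-d(x,z)>(j-1)r$, so $d\bigl(\Omega\cap C_j(x,r),\,B(x,r)\cap\Omega\bigr)\ge(j-1)r$. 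Applying Lemma~\ref{lemma3-1} with $A=\Omega\cap C_j(x,r)$ and $B=B(x,r)\cap\Omega$ then provides a constant $\delta'>0$ (the one in the statement) with
\[
\bignorm{\eins_{C_j(x,r)}\,e^{-t\sL}\,\eins_{B(x,r)}f}_{L^p(\Omega)}\le C\,t^{-\frac Dm(1-\frac1p)}\,e^{-\delta'((j-1)s)^\gamma}\,\norm{f}_{L^1(\Omega)} ,
\]
so that, writing $k=j-1\ge 1$, the whole estimate reduces to controlling the series $\sum_{k\ge 1}e^{-p\delta'(ks)^\gamma}$.

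The only point that really needs care is this series, and I would split into two regimes. If $r^m\ge t$, i.e. $s\ge 1$, then since $\gamma>1$ there is a constant $c_\gamma>0$ with $k^\gamma\ge 1+c_\gamma(k-1)$ for all $k\ge 1$, whence $(ks)^\gamma=k^\gamma s^\gamma\ge s^\gamma+c_\gamma(k-1)$ (using $s^\gamma\ge 1$), and
\[
\sum_{k\ge 1}e^{-p\delta'(ks)^\gamma}\le e^{-p\delta' s^\gamma}\sum_{k\ge 1}e^{-p\delta' c_\gamma(k-1)}=\frac{e^{-p\delta' s^\gamma}}{1-e^{-p\delta' c_\gamma}} ;
\]
feeding this back and taking $p$-th roots gives the claim in this regime. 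If instead $r^m<t$, i.e. $s<1$, the Gaussian factor obeys $1\le e^{\delta'}e^{-\delta' s^\gamma}$, so it is enough to use the plain smoothing bound
\[
\norm{e^{-t\sL}f}_{L^p(\Omega\setminus B(x,2r))}\le\norm{e^{-t\sL}f}_{L^p(\Omega)}\le C\,t^{-\frac Dm(1-\frac1p)}\norm{f}_{L^1(\Omega)} ,
\]
which is Lemma~\ref{lemma3-1} applied with $A=B=\Omega$, and then insert the factor $e^{\delta'}e^{-\delta' s^\gamma}\ge 1$. Combining the two regimes and enlarging $C$ completes the proof. In other words, the annular sum is geometric only when $r^m\gtrsim t$, and in the opposite range one falls back on the $L^1\to L^p$ smoothing estimate, which is costless there because $e^{-\delta' s^\gamma}$ is then bounded below.
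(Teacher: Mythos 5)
Your argument is correct, but it is considerably more elaborate than necessary, and it in fact re-derives Lemma~\ref{lemma3-1} at a level of granularity that Lemma~\ref{lemma3-1} already handles for you. The paper's proof is a single observation: take $A=\Omega\setminus B(x,2r)$ and $B=B(x,r)\cap\Omega$ as a \emph{single pair} of sets in Lemma~\ref{lemma3-1}. Since any $y\in A$ has $d(x,y)\ge 2r$ and any $z\in B$ has $d(x,z)<r$, the triangle inequality gives $d(A,B)\ge r$, and because $\bigl(d(A,B)/t^{1/m}\bigr)^{m/(m-1)}\ge\bigl(r/t^{1/m}\bigr)^{m/(m-1)}=(r^m/t)^{1/(m-1)}$, the estimate of Lemma~\ref{lemma3-1} is already exactly the claimed bound with no summation at all. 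Your route --- slicing the exterior region into annuli $C_j(x,r)$, $j\ge2$, invoking Lemma~\ref{lemma3-1} on each annulus, and resumming the resulting geometric-type series with a case split $s\ge1$ versus $s<1$ --- reproduces the conclusion, and your handling of the series (the inequality $k^\gamma\ge 1+c_\gamma(k-1)$ together with $s^\gamma\ge1$, and the fallback to the plain $L^1\!\to L^p$ smoothing bound when $s<1$) is sound. But it essentially rederives the off-diagonal decay that Lemma~\ref{lemma3-1} was designed to package: there one already sees the annular summation (inside the proof of the $L^1$ bound, equation~\eqref{eq:uniform}) done once and for all in terms of $d(A,B)$. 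Nothing is gained by re-opening it here; the clean move is to apply the lemma once to the two sets as they stand.
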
 
\begin{proof}
  Apply the previous lemma with $A = \Omega \setminus B(x, 2r))$ and
  $B = B(x,r)$.
\end{proof} 

\begin{proof}[Proof of Theorem~\ref{thm:3-1}]
  The assumed Sobolev inequality means that $\sL^{-\frac{1}{2}}$
  defines a bounded operator from $L^2(\Omega)$ into
  $ L^{\frac{2D}{D-m}}(\Omega)$. This implies that for $\alpha > 0$
  satisfying $\alpha < \frac{D}{m}$ and for $p = \frac{2D}{D-\alpha m}$, $\sL^{-\frac{\alpha}{2}}$ is
  bounded from $L^2(\Omega)$ to $L^p(\Omega)$, see for instance
  \cite{Coulhon90}. We rewrite the condition on $p$ as   
  $\frac{1}{2} - \frac{1}{p} = \frac{m \alpha}{2D}$. Now we have the starting point
  $\sL^{-\frac{\alpha}{2}}: L^2(\Omega) \to L^p(\Omega)$ for
  $\frac{1}{2} - \frac{1}{p} = \frac{m \alpha}{2D}$, it remains to
  check the two conditions of Theorem~\ref{thm:main} to obtain the
  endpoint
  $\sL^{-\frac{\alpha}{2}}: L^1(\Omega) \to L^{a, \infty}(\Omega)$. We
  choose $A_r = e^{-r^m \sL}$.

  \medskip \noindent First we prove \eqref{eq:At-estimate}. Let
  $x \in \MS$, $r > 0$, and $j \ge 0$. We define the operator
\[
  T_r = \eins_{C_j(x,r)} e^{-r^m \sL} \eins_{B(x,r)}.
\]
Then the operators $(T_r)_{r>0}$ are uniformly bounded on
$L^1(\Omega)$ since the semigroup is uniformly bounded on
$L^1(\Omega)$ by \eqref{eq:uniform}. On the other hand, the kernel of
$T_r$ is given by 
$K_r(z,y) = \eins_{C_j(x,r)}(z) p_{r^m}(z,y) \eins_{B(x,r)}(y)$ and satisfies
\begin{align*}
  | K_r(z,y) |
  \le & \; \eins_{C_j(x,r)}(z) \frac{C}{V(y,r)} \exp\big\{-\delta \left(\frac{d(z, y)} {r} \right)^{\frac{m}{m-1}} \big\} \eins_{B(x,r)}(y) \\  %% Bernhard: c en delta
  \le & \; \frac{C'}{V(x,r)} \exp\big\{-\delta j^{\frac{m}{m-1}} \big\}.   %% Bernhard: c en delta
\end{align*}
Since this bound is independent of $(z,y)$, it follows that
$T_r : L^1(\Omega) \to L^\infty(\Omega)$ with a norm that is
controlled by $\frac{C}{V(x,r)} \omega_j$ where
$\omega_j = \exp\big\{-\delta j^{\frac{m}{m-1}}
\big\}$.  %% Bernhard: c en delta
By complex interpolation, $T_r : L^1(\Omega) \to L^p(\Omega)$ for all
$p \in (1, \infty)$ and one obtains the first hypothesis
\ref{thm:main-a} of Theorem~\ref{thm:main}.

\medskip \noindent Next, we prove \eqref{eq:Hoermander-a-condition} with
$S = T = \sL^{-\frac{\alpha}{2}}$. 
By Lemma~\ref{lemma3-2} and definition of $a$, there exist positive constants $C$ and  $\delta'$ such that 
\begin{equation}\label{eq:3-3}
  \norm{e^{-s \sL}}_{\mathcal{L}(L^1(B(x,r)), \,L^a(\Omega\setminus B(x,2r)))} \le C\,  s^{-\frac{\alpha}{2}} \exp\big\{-\delta' \left(\frac{r^m} {s} \right)^{\frac{1}{m-1}} \big\}.
\end{equation}
We use \eqref{eq:3-2-fr} and recall our choice $A_r = e^{-r^m  \sL}$. Then
\begin{align*}
  (\sL^{-\frac{\alpha}{2}} - \sL^{-\frac{\alpha}{2}} e^{-r^m \sL}) f
  =& \; \tfrac1{\Gamma(\frac{\alpha}{2})} \int_0^\infty [ s^{\frac{\alpha}{2}-1} e^{-s \sL} f -  s^{\frac{\alpha}{2}-1} e^{-(s+r^m) \sL} f ]\, {ds}\\
  =& \; \tfrac1{\Gamma(\frac{\alpha}{2})} \int_0^\infty [ s^{\frac{\alpha}{2}-1} -  (s-r^m)^{\frac{\alpha}{2}-1} \eins_{\{ s > r^m \}} ] e^{-s \sL} f \, {ds}.
\end{align*}
Now by  \eqref{eq:3-3} we have for any $f$ with support
contained in a ball $B(x,r)$,
\begin{align*}
   & \; \Bigl( \int_{\Omega \setminus B(x, 2r)} |(\sL^{-\frac{\alpha}{2}} - \sL^{-\frac{\alpha}{2}} e^{-r^m \sL}) f(y)|^a\,d\mu(y)\Bigr)^{\frac{1}{a}}\\
\le& \; \frac{\norm{f}_{L^1(B(x,r))}}{\Gamma(\frac{\alpha}{2})}  \int_0^\infty \bigl| s^{\frac{\alpha}{2}-1} -  (s-r^m)^{\frac{\alpha}{2}-1} \eins_{\{ s > r^m \}}\bigr| \; \bignorm{e^{-s \sL}}_{\mathcal{L}(L^1(B(x,r)), \, L^a(\Omega\setminus B(x,2r)))}\, ds\\
\le& \; C  \norm{f}_{L^1(B(x,r))} \int_0^\infty \bigl|s^{\frac{\alpha}{2}-1} -  (s-r^m)^{\frac{\alpha}{2}-1} \eins_{\{ s > r^m \}} \bigr| s^{-\frac{\alpha}{2}} \exp\big\{-\delta' \left(\frac{r^m} {s} \right)^{\frac{1}{m-1}} \big\}\, ds. %% Bernhard c_1 en \delta_1
\end{align*}
By Lemma~\ref{lem:integral} below, this  integral expression is bounded by a
constant independent of $r$. This shows
\eqref{eq:Hoermander-a-condition} and finishes the proof the
$L^1-L^{a, \infty}$ estimate.

\medskip \noindent

Concerning $L^p-L^q$ boundedness  we can
either apply directly \cite{Coulhon90} or argue as follows.  By the Marcinkiewicz interpolation theorem
$\sL^{-\frac{\alpha}{2}}: L^p(\Omega) \to L^q(\Omega)$ for $1 < p \le 2 $ and 
$\frac{1}{p} - \frac{1}{q} = \frac{m \alpha }{2 D}$. For $p > 2$ we argue by duality. 
The heat kernel of $\sL^*$ obeys the same estimate as that of $\sL$,
therefore $\sL^*$ verifies the same $L^1-L^{a, \infty}$ bound.
We obtain as above that 
 $  (\sL^*)^{-\frac{\alpha}{2}}: L^p(\Omega) \to L^q(\Omega)$
for $1 < p \le 2 $ with
$\frac{1}{p} - \frac{1}{q} = \frac{m \alpha }{2 D}$. We take the adjoint and obtain the result. 
\end{proof} 

\noindent We state the following elementary lemma which already
appears in \cite{Coulhon-Duong}. We give a proof for the convenience
of the reader.

\begin{lemma}\label{lem:integral}
  Let $\delta, \gamma, \kappa, >0$. Then
  %% modif: t enlevé en haut, kappa ajouté comme indice à I.
  \[
    I_{\delta, \gamma, \kappa} := \sup_{t>0} \; \Big(\int_0^\infty | s^{\gamma-1} - (s-t)^{\gamma-1} \eins_{\{ s > t \}} | s^{-\gamma} e^{-\delta( t/s)^\kappa}\,ds\Bigr)  < \infty.
  \]
\end{lemma}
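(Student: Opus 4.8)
The plan is to eliminate the parameter $t$ by a scaling substitution and then verify convergence of a single explicit integral. In the integral defining $I_{\delta,\gamma,\kappa}$ I substitute $s = tu$, $ds = t\,du$. Then $s^{\gamma-1} = t^{\gamma-1}u^{\gamma-1}$, $(s-t)^{\gamma-1}\eins_{\{s>t\}} = t^{\gamma-1}(u-1)^{\gamma-1}\eins_{\{u>1\}}$, $s^{-\gamma} = t^{-\gamma}u^{-\gamma}$ and $(t/s)^{\kappa} = u^{-\kappa}$, so every power of $t$ cancels and
\[
  I_{\delta,\gamma,\kappa} \;=\; \int_0^\infty \bigl| u^{\gamma-1} - (u-1)^{\gamma-1}\eins_{\{u>1\}} \bigr|\, u^{-\gamma}\, e^{-\delta u^{-\kappa}}\, du,
\]
which no longer depends on $t$. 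It then remains to show that this integral is finite, and I split the range $(0,\infty)$ into the three pieces $(0,\tfrac12)$, $(\tfrac12,2)$ and $(2,\infty)$.

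On $(0,\tfrac12)$ the indicator vanishes, so the integrand equals $u^{-1}e^{-\delta u^{-\kappa}}$; using the elementary bound $e^{-x}\le x^{-1}$ for $x>0$ with $x = \delta u^{-\kappa}$ gives $e^{-\delta u^{-\kappa}} \le \delta^{-1}u^{\kappa}$, so the integrand is $\le \delta^{-1}u^{\kappa-1}$, integrable near $0$ since $\kappa>0$. On the compact interval $(\tfrac12,2)$ the factor $u^{-\gamma}e^{-\delta u^{-\kappa}}$ and the term $u^{\gamma-1}$ are bounded, and the only possibly unbounded contribution, namely $(u-1)^{\gamma-1}\eins_{\{u>1\}}$ when $\gamma<1$, satisfies $\int_1^2(u-1)^{\gamma-1}\,du = \gamma^{-1}<\infty$; hence this piece is finite. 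Finally, on $(2,\infty)$ the mean value theorem applied to $x\mapsto x^{\gamma-1}$ on $[u-1,u]$ gives $|u^{\gamma-1}-(u-1)^{\gamma-1}| = |\gamma-1|\,\xi^{\gamma-2}$ for some $\xi\in(u-1,u)$; since $u-1\ge u/2$ for $u\ge 2$, this is $\le C_\gamma\, u^{\gamma-2}$ whether $\gamma\le 2$ or $\gamma>2$. Combined with the factor $u^{-\gamma}$ and the trivial bound $e^{-\delta u^{-\kappa}}\le 1$, the integrand on $(2,\infty)$ is $\le C_\gamma\, u^{-2}$, which is integrable. Adding the three bounds yields $I_{\delta,\gamma,\kappa}<\infty$.

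There is no real obstacle here; the only points requiring a little care are the integrable singularity at $u=1$ in the case $\gamma<1$ and the case distinction between $\gamma\le 2$ and $\gamma>2$ in the mean value estimate at infinity. The exponential factor $e^{-\delta u^{-\kappa}}$ is genuinely needed only near the origin, where it kills the $u^{-1}$ singularity, and is simply discarded (bounded by $1$) elsewhere.
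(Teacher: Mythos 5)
Your proof is correct and follows essentially the same route as the paper: scale $s = tu$ to eliminate $t$, control the $u^{-1}$ singularity at the origin with the exponential factor, and obtain $u^{-2}$ decay at infinity from a derivative bound on $x \mapsto x^{\gamma-1}$. The only cosmetic differences are that you substitute once on the whole integral before splitting (the paper splits at $s=t$ first, then substitutes in each piece) and that you use the mean value theorem where the paper uses the fundamental theorem of calculus plus Fubini.
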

\begin{proof}
  The proof is straightforward. We cut the integral into the sum
  \[
     \int_0^t  s^{-1}  e^{-\delta (t/s)^\kappa}\,ds +  \int_t^\infty \bigl| s^{\gamma-1} - (s-t)^{\gamma-1} \bigr| s^{-\gamma} e^{-\delta (t/s)^\kappa}\,ds = I_1 + I_2.
\]
Observe that the $I_1$ coincides by the change of variables
$u = \tfrac{t}s$ with $\int_1^\infty e^{-\delta u^\kappa}\,\frac{du}u$
which is finite and independent of $t$. The second term $I_2$ is
translated to $(0,\infty)$, so that a subsequent change of variables
$s = t u$ yields
\[
  I_2 = \int_0^\infty \bigl| (1+u)^{\gamma-1} - u^{\gamma-1} \bigr|
  (1+u)^{-\gamma} e^{-\delta (\frac{1}{1+u})^\kappa}\,du.
\]
Convergence  close to zero is obvious for any $\gamma>0$. Hence
\begin{align*}
I_2 \le &  \; C + \int_1^\infty \bigl| (1+u)^{\gamma-1} - u^{\gamma-1} \bigr|
  (1+u)^{-\gamma} \,du\\
\le & \; C + |\gamma-1| \int_1^\infty \int_0^1  \frac{ (s+u)^{\gamma-2} }{(s+u)^\gamma} \,ds\,du =  C + |\gamma-1| \ln(2)
\end{align*}
using Fubini's theorem.  
\end{proof}

%%%%%%%% 
There are many situations where the semigroup $(e^{-t \sL})_{t\ge 0}$
does not enjoy a Gaussian upper bound. This is the case for example
for divergence form elliptic operators with bounded measurable and
complex coefficients or for higher order operators with non-smooth
coefficients. What is however true for these operators is an $L^p-L^q$
off-diagonal bound for $p, q$ in some interval around $2$. For these
operators we have a similar result to that found in
Theorem~\ref{thm:3-1}.

\begin{definition}
  Let $\Omega$ be a non-trivial open subset of $\MS$.
  We say that the semigroup $(e^{-t \sL})_{t\ge 0}$ admits an upper
   $L^p-L^q$ off-diagonal estimate of order $m > 1$, if there exists
   some $C, \delta >0$ such that
   \[
     \bignorm{ \eins_{A} \, e^{-t \sL} \, \eins_{B} f }_{L^q(\Omega)} \; \le \;
     C \, t^{\frac{-n}{m}(\frac1p-\frac1q)} \, \exp\big\{-\delta \left(\frac{d(A, B)} {t^{\frac{1}{m}}} \right)^{\frac{m}{m-1}} \big\} \;
     \norm{ f }_{L^p(\Omega)}
   \]
   for all measurable sets $A, B \subset \Omega$.
 \end{definition}

 In the following proposition we assume for simplicity that the volume
 in $\MS$ is polynomial, i.e. that \eqref{eq:ploynomial-vol} holds.

\begin{proposition}\label{prop:fractional-power-Lp}
 Suppose that there exists $c_1, c_2 > 0$ such that
 \begin{equation}   \label{eq:ploynomial-vol}
  c_1 \, r^D \le V(x, r) \le c_2 \, r^D \quad \forall x \in \MS, \, r > 0.
 \end{equation}
  Let $p_0 \in (1, 2)$ and suppose that for all $p \in (p_0, p_0')$,
  $(e^{-t \sL})_{t\ge 0}$ satisfies an $L^{p_0}-L^p$ off-diagonal
  bound of order $m$ for some $m < D$.  Let $\alpha > 0 $ such that
  $2_* := \frac{2D}{D-\alpha m} \le p_0'$ and suppose that
  $\sL^{-\frac{\alpha}{2}}: L^2(\Omega) \to L^{2_*}(\Omega)$ is
  bounded (Sobolev embedding). Let $a$ be defined by
  $\frac{1}{p_0} - \frac{1}{a} = \frac{m\alpha }{2 D}$.  Then
  $\sL^{-\frac{\alpha}{2}}$ is bounded from $L^{p_0}(\Omega)$ into
  $L^{a, \infty}(\Omega)$.
 \end{proposition}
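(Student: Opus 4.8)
The plan is to apply Theorem~\ref{thm:main} (or rather Corollary~\ref{coro:main}, since we work on the open subset $\Omega$) with the choices $S = T = \sL^{-\frac{\alpha}{2}}$, $A_r = e^{-r^m\sL}$, and $p_0$ as given, $a$ as given by $\frac{1}{p_0} - \frac{1}{a} = \frac{m\alpha}{2D}$. We need an intermediate pair $p \le q$ with $\frac1p - \frac1q = \frac1{p_0}-\frac1a = \frac{m\alpha}{2D}$ on which $\sL^{-\frac{\alpha}{2}}$ is already known to be bounded: we take $p = 2$ and $q = 2_* = \frac{2D}{D-\alpha m}$, which is admissible precisely because $2_* \le p_0'$ (so $2 = (2_*)' \ge p_0$, hence $p_0 < p = 2 \le q = 2_*$, matching the hypothesis $p_0 < p \le q$ of the corollary), and boundedness $\sL^{-\frac{\alpha}{2}}: L^2(\Omega)\to L^{2_*}(\Omega)$ is exactly the assumed Sobolev embedding.

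First I would verify hypothesis~\ref{coro:main-a}, the off-diagonal bound~\eqref{eq:At-estimate2} for $A_r = e^{-r^m\sL}$. This is where the polynomial volume assumption \eqref{eq:ploynomial-vol} and the $L^{p_0}$--$L^p$ off-diagonal estimate enter. For $f$ supported in $B(x,r)$ and $j \ge 1$, the off-diagonal bound of order $m$ gives
\[
  \norm{e^{-r^m\sL}f}_{L^p(\Omega\cap C_j(x,r))} \le C\, (r^m)^{-\frac{n}{m}(\frac1{p_0}-\frac1p)}\, e^{-\delta j^{\frac{m}{m-1}}}\, \norm{f}_{L^{p_0}(\Omega)},
\]
since $d(C_j(x,r), B(x,r)) \ge (j-1)r$. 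Dividing by $V(x,(j+1)r)^{1/p} \asymp ((j+1)r)^{D/p}$ and comparing with $V(x,r)^{-1/p_0} \asymp r^{-D/p_0}$, one checks the powers of $r$ match once we recall that $\frac{n}{m}(\frac1{p_0}-\frac1p)$ together with the volume exponents $D/p - D/p_0$ combine correctly — this is a routine bookkeeping computation using $\frac1{p_0}-\frac1p$ and $D$ versus $n$; the resulting $\omega_j$ is a constant times $(j+1)^{D/p} e^{-\delta j^{m/(m-1)}}$, which satisfies $\sum_j j^n\omega_j < \infty$ because of the exponential (Gaussian-type) decay. I should be slightly careful that $n$ (the homogeneity exponent) and $D$ (the polynomial volume exponent) need not be equal, but under \eqref{eq:ploynomial-vol} one may take $n = D$, so no real difficulty arises.

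Next I would verify hypothesis~(b), the $(1{+}\delta)$-Hörmander-type condition \eqref{eq:Hoermander-a-condition2} with $\delta = 1$ (i.e. the region $\Omega\setminus B(x,2r)$), exactly as in the proof of Theorem~\ref{thm:3-1}. Using the subordination formula \eqref{eq:3-2-fr} one writes
\[
  (\sL^{-\frac{\alpha}{2}} - \sL^{-\frac{\alpha}{2}}e^{-r^m\sL})f = \tfrac{1}{\Gamma(\frac\alpha2)}\int_0^\infty \bigl[s^{\frac\alpha2-1} - (s-r^m)^{\frac\alpha2-1}\eins_{\{s>r^m\}}\bigr]e^{-s\sL}f\, ds,
\]
and then applies the $L^{p_0}$--$L^a$ off-diagonal bound for $e^{-s\sL}$ between $B(x,r)$ and $\Omega\setminus B(x,2r)$: analogously to \eqref{eq:3-3}, $\norm{e^{-s\sL}}_{\mathcal{L}(L^{p_0}(B(x,r)), L^a(\Omega\setminus B(x,2r)))} \le C\, s^{-\frac\alpha2}\exp\{-\delta'(r^m/s)^{1/(m-1)}\}$, where the power $s^{-\frac{n}{m}(\frac1{p_0}-\frac1a)} = s^{-\frac{n}{m}\cdot\frac{m\alpha}{2D}} = s^{-\frac{n\alpha}{2D}}$ equals $s^{-\alpha/2}$ precisely when $n = D$ (again legitimate under \eqref{eq:ploynomial-vol}); this uses $a \le p_0'$, which holds since $a \le 2_* \le p_0'$. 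Then Lemma~\ref{lem:integral} (with $\gamma = \frac\alpha2$, $\kappa = \frac1{m-1}$) shows the resulting integral $\int_0^\infty |s^{\frac\alpha2-1} - (s-r^m)^{\frac\alpha2-1}\eins_{\{s>r^m\}}| s^{-\frac\alpha2}e^{-\delta'(r^m/s)^{1/(m-1)}}ds$ is bounded uniformly in $r$, giving \eqref{eq:Hoermander-a-condition2} with a constant $W$ independent of $x,r$. With both hypotheses of Corollary~\ref{coro:main} verified, we conclude $\sL^{-\frac{\alpha}{2}}: L^{p_0}(\Omega) \to L^{a,\infty}(\Omega)$ is bounded.

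The main obstacle, as in Theorem~\ref{thm:3-1}, is confirming that the relevant exponent $a$ indeed lies in the range ($a \le p_0'$) on which the off-diagonal estimate for the semigroup is available — this is exactly why the hypothesis $2_* \le p_0'$ is imposed, and one must track this through the chain $a \le 2_* \le p_0'$. A secondary, purely technical point is the matching of the homogeneity exponent $n$ in the definition of the off-diagonal estimate with the volume exponent $D$; under the two-sided bound \eqref{eq:ploynomial-vol} these coincide, so this causes no genuine trouble but should be stated. Everything else is a direct transcription of the proof of Theorem~\ref{thm:3-1} with $L^1$ replaced by $L^{p_0}$ and the Gaussian bound replaced by the $L^{p_0}$--$L^p$ off-diagonal bound.
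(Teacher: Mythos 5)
Your proposal is correct and is precisely the ``simple adaptation of the proof of Theorem~\ref{thm:3-1}'' that the paper offers as its entire proof of this proposition: you apply Corollary~\ref{coro:main} with $S=T=\sL^{-\alpha/2}$, $A_r=e^{-r^m\sL}$, the starting pair $(p,q)=(2,2_*)$ from the assumed Sobolev bound, verify (H1) via the $L^{p_0}$--$L^2$ off-diagonal estimate and $n=D$ from the two-sided polynomial volume bound, and verify (H2) via the $L^{p_0}$--$L^a$ off-diagonal estimate (legitimate since $a<2_*\le p_0'$) together with the subordination formula and Lemma~\ref{lem:integral}. The only nit is a harmless sign slip in the exponent of the $(j+1)$ factor in your $\omega_j$, which has no bearing on summability.
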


The proof is a simple  adaptation of the proof of Theorem~\ref{thm:3-1}.

\subsection{Riesz transform type operators}
Our aim in this section is to prove $L^1-L^{a,\infty}$ estimates for
Riesz transform type operators
$\nabla \sL^{-\frac{1}{2}} \sL^{\frac{-\alpha}2}$ where $\sL$ has to
be some differential operator. The setting will be that $\MS$ is
either a complete Riemannian manifold and $\sL$ is the positive
Laplace-Beltrami operator $\Delta$ or that $\sL$ is a second order
elliptic operator in divergence form on a domain of $\RR^D$ with
Dirichlet boundary conditions. In the setting of a Riemannian
manifold, we assume the volume doubling property. Note that
$ \nabla \sL^{-\frac{1}{2}} \sL^{-\frac{\alpha}2}f(x)$ takes values in
the tangent space $T_x\MS$.

\begin{proposition}\label{prop:3-1} Suppose that the heat kernel
  $p_t(x,y)$ satisfies the Gaussian upper bound \eqref{eq:3-1} with
  $m = 2$. Suppose the Sobolev inequality \eqref{eq:sob} with $m= 2$
 and  some $D> 2$.  Let $\alpha > 1$ and let $a \in (1, 2]$ be such
  that $1 - \frac{1}{a} = \frac{\alpha - 1}{D}$. Then
  $\nabla \sL^{-\frac{\alpha}2}$ is bounded from $L^1(\MS)$ to
  $L^{a, \infty}(\MS, T\!\MS)$.
\end{proposition}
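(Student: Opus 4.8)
The plan is to deduce the statement from Theorem~\ref{thm:main} applied with $p_0 = 1$, $E = \RR$ and $F = T\!\MS$ (sections of the tangent bundle fit the framework of Theorem~\ref{thm:main} verbatim, since its proof uses only the pointwise fibre norm of $Tf(x)$ together with linearity and the triangle inequality), taking $S = T = \nabla\sL^{-\frac\al2}$ and $A_r = e^{-r^2\sL}$. The first task is the starting point: an $L^p$--$L^q$ bound for $T$ with $1 < p \le q$ and $\frac1p - \frac1q = 1 - \frac1a$. Here I would factor $\nabla\sL^{-\frac\al2} = (\nabla\sL^{-\frac12})\,\sL^{-\frac{\al-1}2}$ and combine two facts: first, $\nabla\sL^{-\frac12}$ is an isometry of $L^2(\MS)$ up to a constant, because the associated form gives $\norm{\nabla v}_2^2 = \langle \sL v, v\rangle = \norm{\sL^{\frac12}v}_2^2$; second, $\sL^{-\frac{\al-1}2}\colon L^p(\MS)\to L^2(\MS)$ is bounded for $\frac1p - \frac12 = \frac{\al-1}{D}$ with $1 < p < 2$, by the $L^p$--$L^q$ part of Theorem~\ref{thm:3-1} with exponent $\al-1$ (equivalently \cite{Coulhon90}). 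This yields $\nabla\sL^{-\frac\al2}\colon L^p(\MS)\to L^2(\MS)$ with $\frac1p - \frac12 = 1 - \frac1a$; since $\frac1p = \frac12 + (1-\frac1a) < 1$ whenever $1 < a < 2$, the pair $(p,q) = (p,2)$ is admissible in Theorem~\ref{thm:main}. (The borderline $a = 2$ needs a separate argument; I comment on it at the end.)

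Hypothesis~\ref{thm:main-a} for $A_r = e^{-r^2\sL}$ is verified exactly as in the proof of Theorem~\ref{thm:3-1}: the kernel of $\eins_{C_j(x,r)}e^{-r^2\sL}\eins_{B(x,r)}$ is bounded pointwise by $\frac{C}{V(x,r)}e^{-\delta j^2}$ by the Gaussian bound, so this operator is bounded $L^1 \to L^\infty$ with norm $\le \frac{C}{V(x,r)}\om_j$, $\om_j = e^{-\delta j^2}$, and interpolation with the uniform $L^1$--$L^1$ bound gives \eqref{eq:At-estimate}; trivially $\sum_j j^n\om_j < \infty$. For the Hörmander-type hypothesis \eqref{eq:Hoermander-a-condition} I would take $\delta = 1$ and use the subordination formula \eqref{eq:3-2-fr}: for $f$ supported in $B(x,r)$,
\[
 (T - SA_r)f \;=\; \tfrac1{\Gamma(\al/2)}\int_0^\infty\bigl[\,s^{\frac\al2-1} - (s-r^2)^{\frac\al2-1}\eins_{\{s>r^2\}}\,\bigr]\,\nabla e^{-s\sL}f\,ds ,
\]
so by Minkowski's integral inequality everything reduces to the \emph{key estimate}
\[
 \bignorm{\nabla e^{-s\sL}f}_{L^a(\Om\setminus B(x,2r))} \;\le\; C\,s^{-\frac\al2}\,e^{-\delta' r^2/s}\,\norm{f}_{L^1(\Om)}\qquad(s>0,\ \supp f\subset B(x,r)) ;
\]
granting it, $\int_0^\infty |s^{\frac\al2-1} - (s-r^2)^{\frac\al2-1}\eins_{\{s>r^2\}}|\,s^{-\frac\al2}e^{-\delta' r^2/s}\,ds$ is finite uniformly in $r$ by Lemma~\ref{lem:integral} (with $\gamma = \frac\al2$, $\kappa = 1$, and $r^2$ in the role of $t$), which is \eqref{eq:Hoermander-a-condition}, and Theorem~\ref{thm:main} then delivers $\nabla\sL^{-\frac\al2}\colon L^1(\MS) \to L^{a,\infty}(\MS, T\!\MS)$.

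To prove the key estimate I would split at $s = r^2$. For $s \ge r^2$ the crude global bound suffices: using $\nabla e^{-s\sL} = (\nabla e^{-s\sL/2})\,e^{-s\sL/2}$,
\[
 \norm{\nabla e^{-s\sL}f}_{L^a(\Om)} \le \norm{\nabla e^{-s\sL/2}}_{a\to a}\,\norm{e^{-s\sL/2}}_{1\to a}\,\norm{f}_1 \le C s^{-\frac12}\cdot C s^{-\frac D2(1-\frac1a)}\norm{f}_1 = C s^{-\frac\al2}\norm{f}_1 ,
\]
where $\norm{\nabla e^{-\tau\sL}}_{a\to a}\le C\tau^{-\frac12}$ follows from the $L^a$-boundedness of the Riesz transform $\nabla\sL^{-\frac12}$ for $1 < a \le 2$ \cite{Coulhon-Duong} together with analyticity of the semigroup on $L^a$, and $\norm{e^{-\tau\sL}}_{1\to a}\le C\tau^{-\frac D2(1-\frac1a)}$ by Riesz--Thorin between the uniform $L^1$-bound and the $L^1$--$L^\infty$ bound $\le C\tau^{-D/2}$ (equivalent to the Sobolev inequality); since $r^2/s\le 1$ this gives the claim. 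For $s < r^2$, writing $B = B(x,r)$ I would decompose $e^{-s\sL/2}f = \eins_{\frac32 B}e^{-s\sL/2}f + \eins_{\Om\setminus\frac32 B}e^{-s\sL/2}f =: g_0 + g_1$. Since $d(B,\Om\setminus\tfrac32 B)\ge\tfrac r2$, Lemma~\ref{lemma3-1} (with $p = a$) gives $\norm{g_1}_a \le C s^{-\frac D2(1-\frac1a)}e^{-\delta'' r^2/s}\norm{f}_1$, hence $\norm{\nabla e^{-s\sL/2}g_1}_{L^a(\Om)} \le C s^{-\frac\al2}e^{-\delta'' r^2/s}\norm{f}_1$. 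For $g_0$, which is supported in $\tfrac32 B$ and hence at distance $\ge\tfrac r2$ from $\Om\setminus B(x,2r)$, I would invoke the $L^a$--$L^a$ off-diagonal (Davies--Gaffney) bound $\bignorm{\eins_A\,(\sqrt\tau\,\nabla e^{-\tau\sL})\,\eins_{B'}}_{a\to a} \le C e^{-c\,d(A,B')^2/\tau}$; with $\norm{g_0}_a \le \norm{e^{-s\sL/2}}_{1\to a}\norm{f}_1$ it yields $\bignorm{\eins_{\Om\setminus B(x,2r)}\nabla e^{-s\sL/2}g_0}_a \le C s^{-\frac12}e^{-c r^2/s}\norm{g_0}_a \le C s^{-\frac\al2}e^{-c r^2/s}\norm{f}_1$. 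Adding the two contributions finishes the key estimate.

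The step I expect to be the main obstacle is the $L^a$--$L^a$ off-diagonal estimate for $\sqrt\tau\,\nabla e^{-\tau\sL}$ when $1 < a < 2$; for $a = 2$ it is the classical estimate obtained from a Caccioppoli inequality together with Davies--Gaffney for $e^{-\tau\sL}$, and for $1 < a < 2$ I would derive it by Marcinkiewicz interpolation between that $L^2$ version and the weak-type $(1,1)$ off-diagonal bound for $\sqrt\tau\,\nabla e^{-\tau\sL}$ that is implicit in the Coulhon--Duong proof that $\nabla\sL^{-\frac12}$ is of weak type $(1,1)$ (their argument verifies precisely a Hörmander-type condition for $\nabla\sL^{-\frac12}$ against $A_r = e^{-r^2\sL}$), the exponential constant surviving interpolation since it is just an operator norm. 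The remaining loose end is the endpoint $a = 2$, where $(p,q) = (p,2)$ is no longer admissible: there one would either use a starting-point target exponent $q > 2$ (legitimate here only under an additional gradient heat-kernel bound) or obtain $a = 2$ by a limiting argument from the cases $a < 2$, and I would flag this as the single place needing extra care. Everything else reduces to Theorem~\ref{thm:main} and to the lemmas already established in Section~\ref{sec:application:riesz}.
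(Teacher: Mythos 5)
Your overall strategy is the same as the paper's: invoke Theorem~\ref{thm:main} with $p_0=1$, $A_r=e^{-r^2\sL}$, $S=T=\nabla\sL^{-\alpha/2}$, use the factorization $\nabla\sL^{-\alpha/2}=(\nabla\sL^{-1/2})\sL^{-(\alpha-1)/2}$ with the $L^2$-isometry of $\nabla\sL^{-1/2}$ for the $L^p\to L^2$ starting point, and reduce the H\"ormander condition \eqref{eq:Hoermander-a-condition} via the subordination formula \eqref{eq:3-2-fr} and Lemma~\ref{lem:integral} to the single off-diagonal estimate $\norm{\eins_{\MS\setminus B(x,2r)}\nabla e^{-s\sL}\eins_{B(x,r)}}_{1\to a}\le C s^{-\alpha/2}e^{-\delta' r^2/s}$. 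You also correctly flag the borderline $a=2$ (where the starting exponent degenerates to $p=1$) as a loose end; the paper's proof has the same implicit restriction.

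Where you diverge is in the proof of the key off-diagonal estimate, and this is where a genuine gap remains. Your plan factors through $e^{-s\sL/2}$ and requires an $L^a$--$L^a$ Davies--Gaffney bound for $\sqrt\tau\,\nabla e^{-\tau\sL}$, which you acknowledge as the main obstacle and propose to obtain by Marcinkiewicz interpolation between the $L^2$ Caccioppoli version and a ``weak-type $(1,1)$ off-diagonal bound implicit in Coulhon--Duong.'' That last ingredient is not actually available in that form, and the reference is circular: what Coulhon and Duong use, and what the paper invokes directly, is Grigor'yan's weighted $L^2$ gradient estimate \eqref{eq:gri}, which by Cauchy--Schwarz gives a \emph{strong} $L^1\to L^1$ off-diagonal bound $\norm{\eins_{\{d\ge R\}}\sqrt\tau\,\nabla e^{-\tau\sL}\eins_{B'}}_{1\to1}\lesssim e^{-cR^2/\tau}$, after which Riesz--Thorin (not Marcinkiewicz) against the $L^2$ Caccioppoli bound yields the $L^a$ version you want. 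So your route is salvageable, but it ultimately needs the same Grigor'yan input and passes through an extra layer of interpolation to produce an $L^a$ gradient estimate that the argument does not really need. The paper avoids this entirely: it computes the operator $T_s=\eins_{\MS\setminus B(x,2r)}\nabla e^{-s\Delta}\eins_{B(x,r)}$ in two norms --- an $L^1\to L^1$ bound with Gaussian decay directly from \eqref{eq:gri} and Cauchy--Schwarz, and an $L^1\to L^2$ bound $\lesssim s^{-D/4-1/2}$ with no decay at all, via the integration by parts $\norm{\nabla v}_2^2=\langle\sL v,v\rangle$, analyticity, and the Sobolev $L^1\to L^2$ smoothing --- and then interpolates between these two \emph{operator} bounds to land in $L^1\to L^a$ with exponential decay inherited from the $L^1$ endpoint. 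This interpolation-of-operator-bounds trick is the decisive simplification; it never requires an off-diagonal estimate in any $L^a$ with $a\ne 1,2$, and in particular never requires $L^a$-boundedness of the gradient of the semigroup. You should replace the $L^a$--$L^a$ Davies--Gaffney step by this $L^1/L^2$ interpolation, which closes the gap and matches the paper.
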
 

\noindent A way to interpret this proposition is to say that if one
solves the elliptic problem $\sL^{\frac{\alpha}2} u = f$ for
$f \in L^1(\MS)$ in the sense that $u = \sL^{-\frac{\alpha}2} f$, then $\nabla\, u \in L^{a, \infty}(\MS)$.

\begin{proof} The arguments are exactly the same in the case of a
  manifold or Euclidean domain. So we consider the case of a manifold
  and $\sL = \Delta$.  We apply Theorem~\ref{thm:main} (or
  Corollary~\ref{coro:main} in the case of an Euclidean domain). We
  first need a starting point. For $u \in L^2(\MS)$ we have by
  Theorem~\ref{thm:3-1}
\begin{align*}
  \norm{\nabla \Delta^{-\frac{\alpha}2} u }_{L^2(\MS)}
   =& \; \norm{\Delta^{\frac{1}{2} -\frac{\alpha}2} u }_{L^2(\MS)}\\
   =& \; \norm{\Delta^{-\frac{\alpha -1}2} u }_{L^2(\MS)}\\
 \le& \; C \, \norm{u}_{L^p(\MS)}
\end{align*}
for $p$ such that $\frac{1}{p} - \frac{1}{2} = \frac{\alpha -
   1}{D}$.  Therefore, $\nabla \sL^{-\frac{\alpha}2}$ is bounded from $L^p(\MS)$ into
 $L^2(\MS, T\!\MS)$.

 \medskip\noindent Now we have to check the conditions of
 Theorem~\ref{thm:main}. We choose $A_r = e^{-r^2 \Delta}$ for which
 we have already checked the first hypothesis \ref{thm:main-a} in the
 proof of Theorem~\ref{thm:3-1}. It remains to check the second
 hypothesis \eqref{eq:Hoermander-a-condition} with
 $S = T = \nabla \Delta^{-\frac{\alpha}{2}}$. The big difference with
 the proof of Theorem~\ref{thm:3-1} comes from the presence of the
 gradient. When repeating the arguments we do not necessarily have
 pointwise bounds for the gradient of the heat kernel. Instead, we rely
 on the following weighted $L^2$ estimate from
 \cite{Grigoryan:JFA,Grigoryan:JDG} which is already used to study the
 Riesz transform in \cite{Coulhon-Duong},
\begin{equation}\label{eq:gri}
  \int_\MS | \nabla_y p_s(y,z) |^2 \, e^{\beta \frac{d(y,z)^2}{s}}\, d\mu(y) \le \frac{C}{s\, V(z, \sqrt{s})}
\end{equation}
for some constant $\beta > 0$ and all $s > 0$, $ z \in \MS$. Let $x \in \MS$, $r > 0$ and consider the operator
$T_s = \eins_{\MS\setminus B(x,2r)} \nabla e^{-s\Delta}
\eins_{B(x,r)}$. The kernel of $T_s$ is given by
$k_s(y,z) = \eins_{\MS\setminus B(x,2r)}(y) \nabla_y p_s(y,z)
\eins_{B(x,r)}(z)$. We estimate the $L^1$ norm of this kernel. So for
$\beta > 0$, satisfying \eqref{eq:gri}, 
\begin{align*}
  & \; \int_{\MS\setminus B(x,2r)} | \nabla_y p_s(y,z) | \eins_{B(x,r)}(z) \, d\mu(y)\\
= & \; \int_{\MS\setminus B(x,2r)} | \nabla_y p_s(y,z) | e^{\frac{\beta}{2} \frac{d(y,z)^2}{s}} e^{-\frac{\beta}{2} \frac{d(y,z)^2}{s}}\eins_{B(x,r)}(z) \, d\mu(y)\\
\le&\; e^{-\frac{\beta}{4} \frac{r^2}{s}} \left(\int_\MS | \nabla_y p_s(y,z) |^2 e^{\beta \frac{d(y,z)^2}{s}}\, d\mu(y) \right)^{\frac{1}{2}}\left( \int_\MS e^{-\frac{\beta}{2} \frac{d(y,z)^2}{s}}\, d\mu(y) \right)^{\frac{1}{2}}\\
\le&\; e^{-\frac{\beta}{4} \frac{r^2}{s}}\frac{C_1}{\sqrt{s\, V(z, \sqrt{s})}} \sqrt{V(z, \sqrt{s})} \\
=&\; e^{-\frac{\beta}{4} \frac{r^2}{s}}\frac{C_1}{\sqrt{s}}.
\end{align*}
Since the last term is independent of $z$ it follows that $T_s$ is a
bounded operator on $L^1(\MS)$ with norm controlled by
$\frac{C_1}{\sqrt{s}}e^{-\frac{\beta}{4} \frac{r^2}{s}}$. 
On the other hand, by analyticity of the semigroup $e^{-t \Delta}$ on $L^2(\MS)$
we have 
\begin{align*}
  \norm{ T_s f}_{L^2(\MS)}^2
  \le & \; \norm{ \nabla e^{-s \Delta} \eins_{B(x,r)} f}_{L^2(\MS)}^2\\
   =  & \; \int_\MS  \Delta e^{-s \Delta} \eins_{B(x,r)} f \cdot e^{-s \Delta} \eins_{B(x,r)} f \, d\mu\\
\le   & \; \norm{ \Delta  e^{-\frac{s}2 \Delta} e^{-\frac{s}2 \Delta} \eins_{B(x,r)} f }_{L^2(\MS)} \norm{  e^{-s \Delta} \eins_{B(x,r)} f }_{L^2(\MS)}  \\
\le   & \; \frac{C}{s}\, \norm{ e^{-\frac{s}2 \Delta} \eins_{B(x,r)}  f}_{L^2(\MS)}^2.
\end{align*}
Now the Sobolev inequality implies the $L^1-L^2$ estimate of
$e^{- \frac{s}2 \Delta}$ in terms of $C_1\, s^{-\frac{D}{4}}$.  Hence $T_s$ is
bounded from $L^1(\MS)$ into $L^2(\MS)$ with norm controlled by
$C_2\, s^{-\frac{D}{4} -\frac{1}{2}}$. Therefore, by complex
interpolation and using $1 - \frac{1}{a} = \frac{\alpha - 1}{D}$ we
have
\begin{equation}\label{eq:3-4} 
\norm{\eins_{\MS\setminus B(x,2r)} \nabla e^{-s\Delta} \eins_{B(x,r)}}_{\mathcal{L}(L^1(\MS), L^a(\MS))} \le C'\, e^{-\beta' \frac{r^2}{s}} s^{-\frac{\alpha}{2}}
\end{equation}
for some positive constants $C'$ and $\beta'$. Using \eqref{eq:3-2-fr}  this estimate implies  that for $f \in L^1(\MS)$ with support contained in $B(x,r)$ 
\begin{align*}
{}& \; \left(\int_{\MS \setminus B(x,2r)} | (\nabla \Delta^{-\frac{\alpha}{2}} - \nabla  \Delta^{-\frac{\alpha}{2}} e^{-r^2 \Delta})f(y)|^a\, d\mu(y) \right)^{\frac{1}{a}}\\
 =& \; \tfrac1{\Gamma(\frac{\alpha}{2})} \left(\int_{\MS \setminus B(x,2r)} \Bigl| \int_0^\infty [ s^{-\frac{\alpha}{2} -1} - (s-r^2)^{-\frac{\alpha}{2} -1} \eins_{\{s > r^2\}}] \nabla e^{-s \Delta} f(y)\Bigr|^a\, ds\,  d\mu(y) \right)^{\frac{1}{a}}\\
\le& \; C'\,  \Bigl(\int_0^\infty | s^{-\frac{\alpha}{2} -1} - (s-r^2)^{-\frac{\alpha}{2} -1} \eins_{\{s > r^2\}} |\, e^{-\beta' \frac{r^2}{s}} s^{-\frac{\alpha}{2}}\, ds\Bigr) \, \|f \|_{L^1(\MS)}\\
\le& \; C'' \, \|f \|_{L^1(\MS)}
\end{align*}
where we used again Lemma~\ref{lem:integral}. This proves condition
\eqref{eq:Hoermander-a-condition} and we appeal to Theorem~\ref{thm:main}
to conclude.
\end{proof}

\subsection{Spectral multipliers}

A well known result of Hörmander \cite{Hormander60} states that a
Fourier multiplier $T_F = {\mathcal F}^{-1} (F(\cdot) \mathcal{ F})$
is bounded from $L^p(\RR^D)$ to $L^q(\RR^D)$ provided
$1 < p \le 2 \le q < \infty$ and $F \in L^{r, \infty}(\RR^D)$ with
$\frac{1}{r} = \frac{1}{p} - \frac{1}{q}$. See also
\cite{RozendaalVeraar:2018} for a related result in the setting of
vector-valued Fourier multipliers. A sufficient 
condition to  have $F \in L^{r, \infty}(\RR^D)$ is  that
$|F(\xi)| \le C\, |\xi|^{-\frac{D}{r}}$ for all $\xi \in \RR^D \setminus \{0\}$.

\medskip\noindent A natural question is to ask whether a similar
result holds for more general operators than the Euclidean
Laplacian. More precisely, let $\sL$ be a non-negative self-adjoint
operator on $L^2(\MS)$, and $F : (0, \infty) \to \CC$ be a bounded
measurable function. Then $F(\sL)$ is bounded on $L^2(\MS)$. We wish
to have a condition close to Hörmander's  which implies that
$F(\sL)$ is bounded from $L^p(\MS)$ into $L^q(\MS)$. For $p = q$ there
are many results in this abstract setting, for instance in \cite{DOS}
where spectral multiplier results (i.e., $L^p$ to $L^p$) are proved
under the sole condition that the heat kernel of $\sL$ has a Gaussian
upper bound and $F$ satisfies some minimal regularity. In this
abstract setting we have

\begin{proposition}\label{prop:spec}
  Suppose the assumptions of Theorem~\ref{thm:3-1} and let $m > 1$ be the 
   constant  in the Gaussian upper bound \eqref{eq:3-1}.
   Let
  $1 < p \le 2 < q < \infty$ and let $r$ be such that
  $\frac{1}{r} = \frac{1}{p} - \frac{1}{q}$.  If the function
  $F: (0, \infty) \to \CC$ is such that
  $| F(\lambda) | \le C \, \lambda^{-\frac{D}{m r}}$ for all
  $\lambda > 0$, then $F(\sL): L^p(\MS) \to L^q(\MS)$ is bounded. 
\end{proposition}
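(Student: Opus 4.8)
The approach I would take is to factor the spectral multiplier $F(\sL)$ through $L^2(\MS)$ as a composition of two Riesz potentials, each of which is controlled by Theorem~\ref{thm:3-1}, with a bounded $L^2$-multiplier in between. The assumption $p\le 2\le q$ is used in an essential way: a spectral multiplier that is merely bounded need not give an operator bounded on $L^s$ for $s\ne 2$, so one cannot peel off the bounded factor on $L^p$ or on $L^q$, but one can on $L^2$, where the spectral theorem applies.

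Concretely, I would put $\alpha_1:=\tfrac{2D}{m}\bigl(\tfrac12-\tfrac1q\bigr)$ and $\alpha_2:=\tfrac{2D}{m}\bigl(\tfrac1p-\tfrac12\bigr)$, so that $\alpha_1>0$ since $q>2$, $\alpha_2\ge 0$ since $p\le 2$, and, because $\tfrac1r=\tfrac1p-\tfrac1q$, the sum $\alpha:=\alpha_1+\alpha_2=\tfrac{2D}{mr}$ obeys $\tfrac{m\alpha}{2D}=\tfrac1r$. Set $G(\lambda):=\lambda^{\alpha/2}F(\lambda)$; the hypothesis $|F(\lambda)|\le C\lambda^{-D/(mr)}=C\lambda^{-\alpha/2}$ says exactly that $|G(\lambda)|\le C$ for all $\lambda>0$, so $G$ is a bounded Borel function and, $\sL$ being non-negative self-adjoint, the spectral theorem gives that $G(\sL)$ is bounded on $L^2(\MS)$.

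Next I would invoke Theorem~\ref{thm:3-1} twice. Applied with exponent $\alpha_1$ to the pair $(2,q)$ — admissible since $1<2<q<\infty$ and $\tfrac12-\tfrac1q=\tfrac{m\alpha_1}{2D}$ — it gives $\sL^{-\alpha_1/2}\colon L^2(\MS)\to L^q(\MS)$ bounded. Applied with exponent $\alpha_2$ to the pair $(p,2)$ — admissible when $p<2$, since then $1<p<2<\infty$ and $\tfrac1p-\tfrac12=\tfrac{m\alpha_2}{2D}$ — it gives $\sL^{-\alpha_2/2}\colon L^p(\MS)\to L^2(\MS)$ bounded; for $p=2$ one has $\alpha_2=0$ and this factor is the identity. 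Granting the factorization $F(\sL)=\sL^{-\alpha_1/2}\,G(\sL)\,\sL^{-\alpha_2/2}$, composing the three bounded maps $L^p(\MS)\to L^2(\MS)\to L^2(\MS)\to L^q(\MS)$ yields $\norm{F(\sL)f}_{L^q(\MS)}\le C\norm{f}_{L^p(\MS)}$, which is the assertion.

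The one point that really needs care — and which I expect to be the main obstacle — is the rigorous justification of $F(\sL)=\sL^{-\alpha_1/2}\,G(\sL)\,\sL^{-\alpha_2/2}$. At the level of the Borel functional calculus this is just the pointwise identity $F(\lambda)=\lambda^{-\alpha_1/2}G(\lambda)\lambda^{-\alpha_2/2}$ for all $\lambda>0$; the subtlety is that $F$ blows up at the origin, so $F(\sL)$ is a priori only densely defined on $L^2(\MS)$, and the subordination integrals $\sL^{-\alpha_i/2}=\tfrac1{\Gamma(\alpha_i/2)}\int_0^\infty t^{\alpha_i/2-1}e^{-t\sL}\,dt$ converge only away from the bottom of the spectrum. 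I would handle this in the standard way: first establish the identity on the range of each spectral projection $\eins_{[\varepsilon,N]}(\sL)$, where $\sL$ is bounded and bounded below, so that all operators in sight are bounded and commuting on $L^2(\MS)$ and the identity is merely multiplicativity of the functional calculus; then let $\varepsilon\to 0$ and $N\to\infty$, using that $\sL$ is injective — which follows from the Sobolev inequality, since $\sL^{1/2}u=0$ forces $u=0$ — to recover $F(\sL)$ on a dense subspace, and finally extend to all of $L^p(\MS)$ by density together with the $L^p\to L^q$ bound just obtained. Once the factorization is secured, the estimate is immediate from Theorem~\ref{thm:3-1} and the spectral theorem.
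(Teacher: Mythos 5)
Your proof is correct and takes the same route as the paper: factor $F(\sL)=\sL^{-\alpha_1/2}\,G(\sL)\,\sL^{-\alpha_2/2}$ with $G(\lambda)=\lambda^{(\alpha_1+\alpha_2)/2}F(\lambda)$ bounded, then apply Theorem~\ref{thm:3-1} twice for the Riesz-potential factors and the spectral theorem on $L^2$ for the middle factor, with the same choice of exponents dictated by $\tfrac1p-\tfrac12$ and $\tfrac12-\tfrac1q$. The paper does not spell out the justification of the functional-calculus factorization that you flag (and handle by spectral truncation $\eins_{[\varepsilon,N]}(\sL)$), but the decomposition and the estimate are identical.
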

Before we give the proof we compare this result with the
aforementioned result of Hörmander for Fourier multipliers. In the
case of the Laplacian, $m= 2$ so that our condition becomes
$| F(\lambda) | \le C\, \lambda^{-\frac{D}{2 r}}$. In our
setting the function is $G: \xi \mapsto F(|\xi|^2)$.  Thus our
condition reads $|G(\xi) | \le C \,  |\xi|^{-\frac{D}{ r}}$ which
implies  $G \in L^{r, \infty}(\RR^D)$.

\begin{proof}
  We write
  $F(\sL) = \sL^{-\frac{\alpha}{2}} \widetilde{F}(\sL)
  \sL^{-\frac{\beta}{2}}$ with
  $\widetilde{F}(\lambda) = F(\lambda) \lambda^{\frac{\alpha +
      \beta}{2}}$ for $\lambda > 0$ where $\alpha, \beta$ are positive
  constants which are chosen as follows. By Theorem~\ref{thm:3-1}
  \[
    \sL^{-\frac{\beta}{2}} : L^p(\MS) \to L^2(\MS)
    \quad\text{and}\quad
    \sL^{-\frac{\alpha}{2}} : L^2(\MS) \to L^q(\MS)
  \]
  provided that $\frac{1}{p} - \frac{1}{2} = \frac{m\beta}{2D}$ and
  $\frac{1}{2}- \frac{1}{q} = \frac{m\alpha}{2D}$. Thus,
  $F(\sL): L^p(\MS) \to L^q(\MS)$ is bounded as soon as
  $\widetilde{F}(\sL)$ is bounded on $L^2(\MS)$. This is the case if
  $\widetilde{F}$ is bounded on $(0, \infty)$, that is, if
  $| F(\lambda) | \le C\, \lambda^{-\frac{\alpha + \beta}{2}} =
  C \, \lambda^{-\frac{D}{m r}}$.
\end{proof}

\section{Boundedness from the Hardy space $H^1_\sL(\MS)$ into $L^{a}(\MS)$}\label{section:H1}

We have seen in the previous section examples of operators which are
bounded from $L^1(\MS)$ into $L^{a, \infty}(\MS)$. As in the classical
case of the Euclidean space, to ensure values in $L^a(\MS)$, one has
to restrict the operator to a subspace of $L^1(\MS)$. The convenient
choice for many problems is the Hardy space.

The classical Hardy space $H^1$ is well understood and a theory of
Hardy spaces $H^1_\sL$ associated with operators $\sL$ has been
developed in recent years, see e.g. \cite{DuongYan}.  Under
appropriate assumptions on $\sL$, $H^1_\sL$ coincides with the
classical Hardy space. This holds in particular when
$\sL = \Delta$ on $\RR^D$. In addition, the space $H^1_\sL$ satisfies
the usual interpolation property $[H^1_\sL, L^2]_\theta = L^p(\MS)$
for $\theta = \frac{2}{p} -1$. We refer to the specific memoir
\cite{HLMMY} on this subject, and the references therein.

\medskip

Let again  $(\MS, d, \mu)$ be a space of homogeneous type. Let $\sL$ be a non-negative self-adjoint operator in $L^2(\MS)$ and suppose that 
its semigroup $(e^{-t\sL})$ satisfies the $L^2$ Davies-Gaffney estimate. More precisely, there exist constants $C, c > 0$ such that for every open subsets $U_1, U_2 \subset \MS$ and every
$f_1, f_2 \in L^2(\MS)$ supported in $U_1$ and $U_2$
\[ | \langle e^{-t \sL} f_1, f_2 \rangle | \le C \exp\{- c \frac{ d(U_1, U_2)^2}{t}\} \norm{ f_1}_{L^2(\MS)} \norm{f_2}_{L^2(\MS)}.
\]
In our situation, we shall assume that the heat kernel of $\sL$ has a Gaussian upper bound of order $m= 2$. It is elementary to see that this implies the Davies-Gaffney estimate.

 Let $M \ge 1$. A function $b$ is called an $(M, \sL)$-atom if there exists
some ball $B(x,r)$ containing the support of $b$ and a function
$h \in L^2(\MS)$ such that
\begin{aufzii}
  \item\label{atom:i} $b = \sL^M h$
  \item\label{atom:ii} $\supp(\sL^k h) \subset B(x, r)$ for each $k = 0,\ldots,M$
  \item\label{atom:iii} $\bignorm{ (r^2 \sL)^k  h }_{L^2(\MS)} \le r^{2M} V(x, r)^{-\frac{1}{2}}$ for each $k = 0,\ldots,M$.
\end{aufzii}
A function $f = \sum_n \lambda_n b_n$ is an $(M, \sL)$-atomic decomposition if $(\lambda_n) \in \ell_1$, $(b_n)$ is a sequence of $(M, \sL)$-atoms and the sum is convergent in $L^2(\MS)$. For such function $f$ we define 
%$H^1_\sL$ is then equipped with the quotient norm
\[
  \norm{ f }_{H^1_\sL} = \inf \left\{ \sum_n |\lambda_n|: \quad
    f = \sum_n \lambda_n b_n \;\; \text{where}\; b_n \; \text{are} \;  (M, \sL) \text{--atoms} \right\}.
\]
The {\em atomic} Hardy space  $H^1_\sL$ is the completion with respect to this norm of the space of functions having an $(M, \sL)$-atomic decomposition. 
Since  $\sL$ satisfies Davies-Gaffney estimates, this space  coincides
(with equivalent norms) with a Hardy space defined via a square
function. We refer again to \cite{HLMMY}. We also mention that under our assumptions $H^1_\sL \subset L^1(\MS)$,  see \cite{AMM}. 

\medskip

%%%%%% FRACTIONAL POWER
\begin{proposition}\label{prop:fractional-power-H1}
  Suppose that
  the heat kernel $p_t(x,y)$ of  $\sL$
  satisfies the Gaussian upper bound
  \[
    | p_t(x,y) | \le \frac{C}{V(x, \sqrt{t})} \exp\big\{-\delta \frac{d(x, y)^2} {t} \big\} 
\]
for $x, y \in \MS$ and $t > 0$ where again $C, \, \delta > 0$ are
constants. Suppose also the Sobolev inequality \eqref{eq:sob} with
$m=2$.  Let $\alpha > 0$ and $a \ge 1$ such that
$1-\frac{1}{a} = \frac{\alpha}{D}$. Then the Riesz potential
$ \sL^{-\frac{\alpha}{2}}$ is bounded from $ H^1_{\sL}$ into
$L^a(\MS)$.
  \end{proposition}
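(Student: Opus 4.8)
The plan is to prove the uniform bound $\bignorm{\sL^{-\alpha/2}b}_{L^a(\MS)}\le C$ for every $(M,\sL)$-atom $b$, with $M\in\NN$ fixed sufficiently large (any $M>D/2$); by the standard atomic argument recalled above this yields the boundedness of $\sL^{-\alpha/2}\colon H^1_\sL\to L^a(\MS)$. So fix an atom $b=\sL^Mh$ supported, together with $h$, in a ball $B=B(x_B,r_B)$. From the defining properties of an $(M,\sL)$-atom with $m=2$ we record $\bignorm{b}_{L^2(\MS)}\le V(x_B,r_B)^{-1/2}$ (the case $k=M$ of \ref{atom:iii}) and, by Cauchy--Schwarz, $\bignorm{h}_{L^1(\MS)}\le V(x_B,r_B)^{1/2}\bignorm{h}_{L^2(\MS)}\le r_B^{2M}$ (the case $k=0$). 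I would then split $\MS=2B\cup\bigcup_{j\ge1}U_j$, where $U_j:=B(x_B,2^{j+1}r_B)\setminus B(x_B,2^jr_B)$, and estimate $\sL^{-\alpha/2}b$ on $2B$ and on each annulus $U_j$ separately.

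On the ball $2B$ I would use no cancellation, only the mapping properties of $\sL^{-\alpha/2}$ off $L^1$. Note $\alpha<D$ (this is forced by $a\ge1$ and $1-\tfrac1a=\tfrac\alpha D$); choose $\varepsilon>0$ with $1+\varepsilon<D/\alpha$ and define $q_\varepsilon$ by $\tfrac1{1+\varepsilon}-\tfrac1{q_\varepsilon}=\tfrac\alpha D$, so that $1<1+\varepsilon<q_\varepsilon<\infty$ and $q_\varepsilon>a$. By the $L^p$--$L^q$ assertion of Theorem~\ref{thm:3-1}, $\sL^{-\alpha/2}\colon L^{1+\varepsilon}(\MS)\to L^{q_\varepsilon}(\MS)$ is bounded. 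Applying H\"older's inequality on $2B$, the inclusion $L^2(B)\emb L^{1+\varepsilon}(B)$, the bound $\bignorm{b}_{L^2}\le V(x_B,r_B)^{-1/2}$ and the doubling property, I expect
\[
  \bignorm{\sL^{-\alpha/2}b}_{L^a(2B)}\le V(x_B,2r_B)^{\frac1a-\frac1{q_\varepsilon}}\bignorm{\sL^{-\alpha/2}b}_{L^{q_\varepsilon}(\MS)}\le C\,V(x_B,r_B)^{\frac1a-\frac1{q_\varepsilon}+\frac1{1+\varepsilon}-1},
\]
and the exponent is precisely $\tfrac1a+\tfrac\alpha D-1=0$, so this term is bounded by an absolute constant.

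On the annuli $U_j$ I would exploit the cancellation $b=\sL^Mh$. By \eqref{eq:3-2-fr}, $\sL^{-\alpha/2}b=\tfrac1{\Gamma(\alpha/2)}\int_0^\infty t^{\frac\alpha2-1}\sL^Me^{-t\sL}h\,dt$. The Gaussian upper bound on $p_t$ propagates, thanks to the holomorphy of the semigroup, to its time derivatives: $\lrabs{(-\partial_t)^Mp_t(x,y)}\le C\,t^{-M}V(x,t^{1/2})^{-1}\exp(-c\,d(x,y)^2/t)$. Running the argument of the proof of Lemma~\ref{lemma3-1} with $\sL^Me^{-t\sL}$ in place of $e^{-t\sL}$---the $L^1$--$L^1$ bound with exponential decay coming from the $L^1$-norm of this kernel and \eqref{eq:uniform}, the $L^1$--$L^\infty$ bound $\le C\,t^{-M-D/2}$ from $\sL^Me^{-t\sL}=(\sL^Me^{-t\sL/2})\,e^{-t\sL/2}$ and the Sobolev inequality, and then interpolation---and using $d(U_j,B)\ge2^{j-1}r_B$ together with $\tfrac D2(1-\tfrac1a)=\tfrac\alpha2$, I expect
\[
  \bignorm{\eins_{U_j}\sL^Me^{-t\sL}\eins_B}_{{\mathcal L}(L^1(\MS),\,L^a(\MS))}\le C\,t^{-M-\frac\alpha2}\exp\bigl(-c'\,4^{j}r_B^2/t\bigr).
\]
Then $\bignorm{\sL^Me^{-t\sL}h}_{L^a(U_j)}\le C\,t^{-M-\frac\alpha2}r_B^{2M}e^{-c'4^jr_B^2/t}$, and integrating against $t^{\frac\alpha2-1}$ (the substitution $u=c'4^jr_B^2/t$ turns the integral into a constant multiple of $(4^jr_B^2)^{-M}\Gamma(M)$, in the spirit of Lemma~\ref{lem:integral}) gives $\bignorm{\sL^{-\alpha/2}b}_{L^a(U_j)}\le C_M\,2^{-2Mj}$. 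Summing over $j\ge1$ and adding the local estimate yields the uniform bound for $\bignorm{\sL^{-\alpha/2}b}_{L^a(\MS)}$.

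The main obstacle is the off-diagonal estimate on the annuli: one must extract \emph{simultaneously} the genuine exponential factor $\exp(-c'4^jr_B^2/t)$ and the power $t^{-M-\alpha/2}$, since it is exactly their combination that makes $\int_0^\infty t^{-M-1}e^{-c'4^jr_B^2/t}\,dt$ converge at $t=0$ (the Gaussian factor does this) while decaying geometrically in $j$. This is where the holomorphy of the semigroup---needed to pass from $p_t$ to its time derivatives---and the interpolation mechanism of Lemma~\ref{lemma3-1} are indispensable; notably, no two-sided estimate on the volume $V(x,r)$ is required, in keeping with Theorem~\ref{thm:3-1}.
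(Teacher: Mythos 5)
Your proof is correct and takes a genuinely different route from the paper's. The paper's argument decomposes $\sL^{-\alpha/2}b$ into three terms: the local piece $\bignorm{\sL^{-\alpha/2}b}_{L^a(2B)}$, and on the far region it splits the \emph{operator} as $\sL^{-\alpha/2}(I-e^{-r^2\sL}) + \sL^{-\alpha/2}e^{-r^2\sL}$, treating the first piece with the off-diagonal bound \eqref{eq:3-3} (parallel to hypothesis \eqref{eq:Hoermander-a-condition} in Theorem~\ref{thm:main}) and invoking the atom cancellation $b=\sL^Mh$ only for the smooth global piece $\sL^{-\alpha/2}e^{-r^2\sL}b = \sL^{M-\alpha/2}e^{-r^2\sL}h$. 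Your version instead keeps the operator intact, decomposes \emph{space} into dyadic annuli $U_j$, and exploits $b=\sL^Mh$ on every annulus via the time-differentiated kernel $\sL^Me^{-t\sL}$; it is the classical ``atom plus annuli'' mechanism for proving $H^1\to L^a$ bounds. What this buys you: a more self-contained argument (one sum over $j$ rather than re-running the $T=T(I-A_r)+TA_r$ philosophy of the weak-type theorem), and, thanks to the $L^{1+\varepsilon}\to L^{q_\varepsilon}$ trick in the local step, validity for the full range $\alpha\in(0,D)$ without caveats, whereas the paper's Step~1 as written with $L^2\to L^{2_*}$ needs $\alpha<D/2$ for $2_*$ to be a valid exponent (the paper's range could be repaired the same way you do). What it costs: you must invoke the time-derivative Gaussian bound $|\partial_t^M p_t(x,y)|\lesssim t^{-M}V(x,\sqrt t)^{-1}e^{-cd(x,y)^2/t}$, which is standard under the stated hypotheses (holomorphy of $e^{-t\sL}$ plus Cauchy's formula on a sector), but is an extra input the paper's route avoids by only applying analyticity in the scalar form $\norm{\sL^{M-\alpha/2}e^{-r^2\sL/2}}_{L^a\to L^a}\lesssim r^{-2(M-\alpha/2)}$. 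One small remark: your restriction $M>D/2$ is not actually needed for your estimates --- $M\ge1$ already gives the summable factor $4^{-jM}$ --- though it is a common normalization to ensure the Hardy space is independent of $M$.
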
 
  
\begin{proof}
  %We use the strategy explained above.  
  It is enough to prove that for some constant $C > 0$
  \begin{equation}\label{eqH1-1}
  \norm{\sL^{-\frac{\alpha}{2}} b}_{L^a(\MS)} \le C
  \end{equation} 
  for every $(M, \sL)$-atom $b$. Indeed, taking an  $(M, \sL)$-atomic decomposition $f = \sum_n \lambda_n b_n$ with
  $\norm{ f }_{H^1_\sL} \approx   \sum_n |\lambda_n|$,  it then follows from \eqref{eqH1-1} that 
  \[  \sum_n | \lambda_n| \norm{\sL^{-\frac{\alpha}{2}} b_n}_{L^a(\MS)} \le C\, \sum_n | \lambda_n|  \le C'\, \norm{ f }_{H^1_\sL}.
  \]
  The series $\sum_n \lambda_n \sL^{-\frac{\alpha}{2}} b_n$ is
  absolutely convergent in $L^a(\MS)$ and hence it is convergent.  On
  the other hand, the series $f = \sum_n \lambda_n b_n$ is convergent
  in $L^2(\MS)$ and $ \sL^{-\frac{\alpha}{2}}$ is bounded from
  $L^2(\MS)$ to $L^q(\MS)$ with
  $\frac{1}{2} - \frac{1}{q} = \frac{\alpha}{D}$ (by \eqref{eq:sob},
  see Theorem~\ref{thm:3-1}) so that
  $\sL^{-\frac{\alpha}{2}} f = \sum_n \lambda_n
  \sL^{-\frac{\alpha}{2}} b_n$ as elements of $L^q(\MS)$.  Therefore,
  \[ 
  \norm{\sL^{-\frac{\alpha}{2}} f}_{L^a(\MS)} \le C'\, \norm{ f }_{H^1_\sL}
  \]
  and this estimate extends to all $f \in H^1_\sL$. 
  
  \medskip \noindent We now prove  \eqref{eqH1-1}. Let $b = \sL^M h$ be an
  $(M,\sL)$-atom where $M>\tfrac{\DIM}2$ satisfying \ref{atom:i}--~\ref{atom:iii}.  We decompose
  \begin{align*}
    \norm{ \sL^{-\frac{\alpha}{2}}  b }_{L^a(\MS)} \le
    & \; \norm{ \sL^{-\frac{\alpha}{2}}  b }_{L^a(B(x, 2r))} + 
    \norm{ \sL^{-\frac{\alpha}{2}} (I - e^{-r^2 \sL}) b }_{L^a( \MS \setminus B(x, 2r))} \\
    & \; +       \norm{ \sL^{-\frac{\alpha}{2}} e^{-r^2 \sL} b }_{L^a(\MS)}
  \end{align*}
  and estimate the three terms separately. 
  
  \medskip \noindent Step 1: we treat the first term. By H\"older's inequality
   \[
    \bignorm{ \sL^{-\frac{\alpha}{2}} b }_{L^a(B(x, 2r))} \le \bignorm{ \sL^{-\frac{\alpha}{2}} b }_{L^{2_*}(\MS)} V(x, 2r)^{\frac1a-\frac{1}{2_*}}.
  \]
  We arrange the value of $2_*$ here such that
  $\tfrac12 - \tfrac{1}{2_*} = 1 - \tfrac1a = \tfrac{\alpha}{D}$. Then  Theorem~\ref{thm:3-1} yields
  \[
    \bignorm{ \sL^{-\frac{\alpha}{2}} b }_{L^a(B(x, 2r))} \le C\,  \norm{b}_{L^2(\MS)} V(x, 2r)^{\frac{1}{2}} \le C'\,  \norm{b}_{L^2(\MS)} V(x, r)^{\frac{1}{2}}.
  \]
  Writing $b = \sL^M h$ by \ref{atom:i}, and using \ref{atom:iii} with 
  $k{=}M$ gives 
  \[
    \norm{ b }_{L^{2}(\MS)} = \norm{ \sL^M h }_{L^{2}(\MS)} \le C_1
    V(x, r)^{-\frac{1}{2}}.
  \]
  for some constant $C_1 > 0$ independent of $b$.  Hence
  \[
    \norm{ \sL^{-\frac{\alpha}{2}} b }_{L^a(B(x, 2r))} \le C_1.
  \]

  \medskip \noindent Step 2: we start with the representation
  \eqref{eq:3-2-fr}  that gives
  \begin{align*}
    & \;  \bignorm{ \sL^{-\frac{\alpha}{2}}  (I - e^{-r^2 \sL}) b }_{L^a( \MS \setminus B(x, 2r))} \\
    \le & \; \tfrac1{\Gamma(\frac{\alpha}{2})} \int_0^\infty \bigl|      s^{\frac{\alpha}{2}-1} - (s-r^2)^{\frac{\alpha}{2}-1}    \eins_{[\,s>r^2\,]} \bigr| \;  \bignorm{ e^{-s \sL} b }_{L^a( \MS  \setminus B(x, 2r))}\, ds.  \end{align*}
  Since $\supp(b) \subset B(x, r)$, we use  \eqref{eq:3-3} to obtain, as in  the  proof of Theorem~\ref{thm:3-1},   
  \[
    \bignorm{ \sL^{-\frac{\alpha}{2}} (I - e^{-r^2 \sL}) b }_{L^a( \MS \setminus B(x, 2r))}
    \le  C_2     \norm{b}_{L^1(\MS)} 
  \]
 since 
    \[
       \sup_{r>0}\; \int_0^\infty |   s^{\frac{\alpha}{2}-1} - (s-r^2)^{\frac{\alpha}{2}-1}    \eins_{[\,s>r^2\,]} | s^{-\frac{D}2(1-\frac1a)} e^{-\delta \frac{r^2}{s}} \,ds < \infty
    \]
  by observing that
  $\frac{D}2(1-\frac1a) = \frac{\alpha}2$ and appealing to
  Lemma~\ref{lem:integral}.  From this we deduce
   \[
     \bignorm{ \sL^{-\frac{\alpha}{2}} (I - e^{-r^2 \sL}) b }_{L^a( \MS \setminus B(x, 2r))}
     \le C_2 
      \norm{b}_{L^1(\MS)} 
     \le C_2
     \norm{b}_{L^2(\MS)} V(x,r)^{\frac{1}{2}}
     \le C_3
   \]
   by property \ref{atom:iii} for $k{=}M$.

  \medskip \noindent Step 3: for the last term, we use the atom property \ref{atom:i} of $b$ to write
  \[
      \bignorm{ \sL^{-\frac{\alpha}2} e^{-r^2 \sL} b }_{L^a(\MS)}
      = \bignorm{ \sL^{M-\frac{\alpha}2} e^{-\frac{r^2}2 \sL}  e^{-\frac{r^2}2 \sL}  h }_{L^a(\MS)}.
  \]
  The Sobolev inequality provides an  $L^1-L^a $ estimate 
  \[
    \norm{ e^{-t \sL}}_{L^1(\MS) \to L^a(\MS)} \le C_4\,  
    t^{-\frac{D}2(1-\frac1a)}.
  \]
  By the analyticity of the semigroup, we have with some inessential constants $C_5, C_6, \ldots$
  \begin{align*}
      \bignorm{ \sL^{-\frac{\alpha}2} e^{-r^2 \sL} b }_{L^a(\MS)}
      \le C_5 & \; \bigl(\frac{r^2}2\bigr)^{-(M-\frac{\alpha}2)} \bignorm{  e^{-\frac{r^2}2 \sL}  h }_{L^a(\MS)}\\
      \le C_6  & \; r^{\alpha-2M}  \bigl( \frac{r^2}2 \bigr)^{-\frac{D}2(1-\frac1a)} \norm{ h }_{L^1(\MS)}\\
      \le C_7 & \; r^{-2M}   \norm{ h }_{L^1(\MS)}. 
  \end{align*}
  Now the Cauchy-Schwarz inequality and the atom property \ref{atom:iii} for $k{=}0$
  allows to estimate further
  \[
    \norm{ h }_{L^1(\MS)} \le  \norm{ h }_{L^2(\MS)} V(x,r)^{\frac{1}{2}} \le  r^{2M},
  \]
  so that $\dsp  \bignorm{ \sL^{-\frac{\alpha}2} e^{-r^2 \sL} b }_{L^a(\MS)} \le  C_7$. \qedhere
\end{proof}

Identifying the dual space $( H^1_\sL)'$ with $\text{BMO}_\sL$ from  \cite{DuongYan}, we record

\begin{corollary}\label{cor:fractional-dual}
  Under the hypotheses of the previous proposition,
  $\sL^{-\frac{\alpha}2}: L^{\frac{D}\alpha}(\MS) \to \text{BMO}_\sL$
  is bounded for all $\alpha< D$.
\end{corollary}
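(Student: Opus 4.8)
The plan is to deduce the statement from Proposition~\ref{prop:fractional-power-H1} by duality. First I record the arithmetic of the exponents: the relation $1-\tfrac1a=\tfrac{\alpha}{D}$ gives $a=\tfrac{D}{D-\alpha}$, whence $a'=\tfrac{D}{\alpha}$; in particular the target space $L^{\frac{D}{\alpha}}(\MS)$ in the statement is precisely $L^{a'}(\MS)$, and $0<\alpha<D$ guarantees $1<a,a'<\infty$. By Proposition~\ref{prop:fractional-power-H1}, the operator $\sL^{-\frac{\alpha}{2}}:H^1_\sL\to L^a(\MS)$ is bounded.

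Next I invoke the identification $(H^1_\sL)^*=\text{BMO}_\sL$ of Duong and Yan \cite{DuongYan}, which applies since $\sL$ is non-negative self-adjoint with a heat kernel satisfying a Gaussian bound, and since $\sL^*=\sL$ gives $\text{BMO}_{\sL^*}=\text{BMO}_\sL$. Taking the adjoint of the bounded map $\sL^{-\frac{\alpha}{2}}:H^1_\sL\to L^a(\MS)$ yields a bounded operator $\bigl(\sL^{-\frac{\alpha}{2}}\bigr)^{*}:\bigl(L^a(\MS)\bigr)^{*}=L^{a'}(\MS)\to (H^1_\sL)^{*}=\text{BMO}_\sL$, with norm at most $\bignorm{\sL^{-\frac{\alpha}{2}}}_{H^1_\sL\to L^a(\MS)}$. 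It remains to identify this abstract adjoint with $\sL^{-\frac{\alpha}{2}}$ itself.

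For that identification I argue on dense subspaces. Finite linear combinations of $(M,\sL)$-atoms are dense in $H^1_\sL$ by definition, and by property \ref{atom:iii} every atom lies in $L^1\cap L^2(\MS)$; likewise $L^2\cap L^{a'}(\MS)$ is dense in $L^{a'}(\MS)$. For an $(M,\sL)$-atom $b$ and $g\in L^2\cap L^{a'}(\MS)$, inserting the subordination formula \eqref{eq:3-2-fr} and using the self-adjointness of each $e^{-s\sL}$ on $L^2(\MS)$ together with Fubini's theorem (the double integral converging absolutely by the same $L^2$ bounds used in Step~3 of the proof of Proposition~\ref{prop:fractional-power-H1}) gives $\langle \sL^{-\frac{\alpha}{2}} b,g\rangle=\langle b,\sL^{-\frac{\alpha}{2}} g\rangle$. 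Hence the functional on $H^1_\sL$ represented by $\bigl(\sL^{-\frac{\alpha}{2}}\bigr)^{*}g\in\text{BMO}_\sL$ is the one induced by $\sL^{-\frac{\alpha}{2}} g$, so $\sL^{-\frac{\alpha}{2}} g\in\text{BMO}_\sL$ with $\bignorm{\sL^{-\frac{\alpha}{2}} g}_{\text{BMO}_\sL}\le C\,\norm{g}_{L^{a'}(\MS)}$. Since $L^2\cap L^{a'}(\MS)$ is dense in $L^{a'}(\MS)$ and $\text{BMO}_\sL$ is complete, $\sL^{-\frac{\alpha}{2}}$ extends to a bounded operator from $L^{\frac{D}{\alpha}}(\MS)$ into $\text{BMO}_\sL$.

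The main point requiring care is the last identification: for large $\alpha$ the operator $\sL^{-\frac{\alpha}{2}}$ need not be bounded on $L^2(\MS)$, so the statement must be read as asserting that $\sL^{-\frac{\alpha}{2}}$, initially defined on the dense subspace $L^2\cap L^{a'}(\MS)$ via \eqref{eq:3-2-fr}, admits a bounded extension into $\text{BMO}_\sL$; once the adjointness computation above is in place this is automatic.
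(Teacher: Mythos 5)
Your argument is correct and follows the same route as the paper, which likewise obtains the result by dualizing the $H^1_\sL \to L^a$ bound of Proposition~\ref{prop:fractional-power-H1} via the Duong--Yan identification $(H^1_\sL)^* = \text{BMO}_\sL$. The paper's proof simply says ``the corollary follows by duality''; you have carefully supplied the exponent arithmetic $a' = D/\alpha$, the adjoint identification on dense subspaces, and the density extension, all of which are left implicit there.
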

\begin{proof}
  By  the previous proposition  
  $\sL^{-\frac{\alpha}2}: H^1_\sL \to L^a(\MS)$ is bounded for $1-\frac1a = \frac{\alpha}{D}$. The corollary follows by duality. 
\end{proof}
We mention that a related result to this corollary is proved in \cite{Dziubanski} for the particular case of $\sL = \Delta + V$ with some non-negative potential $V$. 
\begin{corollary}\label{coro:riesz-h1}
  Under the hypotheses Proposition~\ref{prop:3-1}, the Riesz transform type operator $\nabla \sL^{-\frac{\alpha}{2}}$ is bounded from $H^1_\sL$ into
  $L^a(\MS)$ for $a \le 2$ with $1 - \frac{1}{a} = \frac{\alpha-1}{D}$.
  \end{corollary}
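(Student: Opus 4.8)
The plan is to run the atomic argument recalled at the beginning of Section~\ref{section:H1}, following the proof of Proposition~\ref{prop:fractional-power-H1} line by line but with the pointwise heat kernel bound replaced by the operator bound \eqref{eq:3-4} established inside the proof of Proposition~\ref{prop:3-1}. Thus it suffices to produce a uniform bound $\norm{\nabla\sL^{-\frac{\alpha}{2}}b}_{L^a(\MS)}\le C$ over all $(M,\sL)$-atoms $b=\sL^M h$, with $M$ chosen large enough (the space $H^1_\sL$ does not depend on this choice). Fixing such an atom supported in a ball $B(x,r)$, I would decompose
\[
  \nabla\sL^{-\frac{\alpha}{2}}b = \eins_{B(x,2r)}\nabla\sL^{-\frac{\alpha}{2}}b + \eins_{\MS\setminus B(x,2r)}\nabla\sL^{-\frac{\alpha}{2}}(I-e^{-r^2\sL})b + \nabla\sL^{-\frac{\alpha}{2}}e^{-r^2\sL}b
\]
and estimate the three terms separately.

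For the local term I would use H\"older's inequality (legitimate since $a\le 2$) to write $\norm{\eins_{B(x,2r)}\nabla\sL^{-\frac{\alpha}{2}}b}_{L^a}\le\norm{\nabla\sL^{-\frac{\alpha}{2}}b}_{L^2(\MS)}\,V(x,2r)^{\frac1a-\frac12}$, and then invoke the starting point from Proposition~\ref{prop:3-1}, namely that $\nabla\sL^{-\frac{\alpha}{2}}: L^p(\MS)\to L^2(\MS,T\MS)$ is bounded for $\frac1p-\frac12=\frac{\alpha-1}{D}$. Since $b$ is supported in $B(x,r)$, $\norm{b}_{L^p}\le\norm{b}_{L^2}V(x,r)^{\frac{\alpha-1}{D}}$, and property \ref{atom:iii} with $k{=}M$ gives $\norm{b}_{L^2}\le V(x,r)^{-1/2}$; combining these with the doubling property and the identity $\frac1a-\frac12=\frac12-\frac{\alpha-1}{D}\ge 0$, the powers of the volume cancel and we get $\norm{\eins_{B(x,2r)}\nabla\sL^{-\frac{\alpha}{2}}b}_{L^a}\le C$. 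The point to watch here is that one cannot argue through boundedness of $\nabla\sL^{-\frac{\alpha}{2}}$ into a Sobolev exponent $L^{2_*}$ as in Step~1 of Proposition~\ref{prop:fractional-power-H1}, since that would need $L^{2_*}$-boundedness of the Riesz transform $\nabla\sL^{-1/2}$, which is not available; routing through $L^p\to L^2$ avoids this.

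For the middle term I would insert the subordination formula \eqref{eq:3-2-fr} exactly as in the proofs of Theorem~\ref{thm:3-1} and Proposition~\ref{prop:3-1}, obtaining
\[
  \eins_{\MS\setminus B(x,2r)}\nabla\sL^{-\frac{\alpha}{2}}(I-e^{-r^2\sL})b = \tfrac1{\Gamma(\frac{\alpha}{2})}\int_0^\infty\bigl[s^{\frac{\alpha}{2}-1}-(s-r^2)^{\frac{\alpha}{2}-1}\eins_{\{s>r^2\}}\bigr]\,\eins_{\MS\setminus B(x,2r)}\nabla e^{-s\sL}b\,ds,
\]
and then estimating $\norm{\eins_{\MS\setminus B(x,2r)}\nabla e^{-s\sL}b}_{L^a}\le C\,e^{-\beta' r^2/s}s^{-\alpha/2}\norm{b}_{L^1}$ by \eqref{eq:3-4} (using $\supp b\subset B(x,r)$). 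The resulting $s$-integral is bounded uniformly in $r$ by Lemma~\ref{lem:integral} with $\gamma=\alpha/2$ and $\kappa=1$, and $\norm{b}_{L^1}\le\norm{b}_{L^2}V(x,r)^{1/2}\le 1$ again by \ref{atom:iii}, so this term is $\le C$ as well.

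For the last term I would use property \ref{atom:i} to write $\nabla\sL^{-\frac{\alpha}{2}}e^{-r^2\sL}b=\nabla\sL^{M-\frac{\alpha}{2}}e^{-r^2\sL}h$, split $e^{-r^2\sL}=e^{-\frac{r^2}{3}\sL}e^{-\frac{r^2}{3}\sL}e^{-\frac{r^2}{3}\sL}$, and estimate
\[
  \norm{\nabla\sL^{-\frac{\alpha}{2}}e^{-r^2\sL}b}_{L^a}\le\norm{\nabla e^{-\frac{r^2}{3}\sL}}_{L^1\to L^a}\,\norm{\sL^{M-\frac{\alpha}{2}}e^{-\frac{r^2}{3}\sL}}_{L^1\to L^1}\,\norm{h}_{L^1}.
\]
Here $\norm{\nabla e^{-s\sL}}_{L^1\to L^a}\le C s^{-\alpha/2}$ follows by interpolating $\norm{\nabla e^{-s\sL}}_{L^1\to L^1}\le Cs^{-1/2}$ (a consequence of the weighted bound \eqref{eq:gri} with zero distance) against $\norm{\nabla e^{-s\sL}}_{L^1\to L^2}\le Cs^{-D/4-1/2}$ (analyticity of the semigroup on $L^2$ together with the Sobolev $L^1\to L^2$ estimate, exactly as in the proof of Proposition~\ref{prop:3-1}); the factor $\norm{\sL^{M-\frac{\alpha}{2}}e^{-\frac{r^2}{3}\sL}}_{L^1\to L^1}\le C r^{\alpha-2M}$ is the standard bound for $(t\sL)^{M-\alpha/2}e^{-t\sL}$ on $L^1$, valid because this operator has a kernel with Gaussian bounds; and $\norm{h}_{L^1}\le\norm{h}_{L^2}V(x,r)^{1/2}\le r^{2M}$ by \ref{atom:iii} with $k{=}0$. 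Multiplying the three factors gives the uniform bound $C$, and combining the three steps proves the corollary. The main obstacle throughout is the gradient: unlike in Proposition~\ref{prop:fractional-power-H1} there is no pointwise bound on $\nabla_y p_s$ and $\nabla\sL^{-1/2}$ need not be bounded on the relevant $L^{2_*}$, so each step must be arranged to use only the $L^1$--$L^a$ operator bound \eqref{eq:3-4}, the weighted $L^2$ gradient estimate \eqref{eq:gri}, and analyticity of the semigroup.
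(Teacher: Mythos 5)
Your proof is correct, but it takes a genuinely different and considerably longer route than the paper. The paper's argument is essentially two lines: it observes that the obstruction that prevented factoring $\nabla\sL^{-\alpha/2}$ in the weak-type $(1,a)$ setting (one cannot compose weak-type estimates) disappears in the $H^1_\sL$ setting, and simply writes $\nabla\sL^{-\alpha/2} = (\nabla\sL^{-1/2})\,\sL^{-(\alpha-1)/2}$. Then $\sL^{-(\alpha-1)/2}:H^1_\sL\to L^a(\MS)$ is precisely Proposition~\ref{prop:fractional-power-H1} (with $\alpha$ there replaced by $\alpha-1$, so the index condition $1-\tfrac1a=\tfrac{\alpha-1}{D}$ matches), and $\nabla\sL^{-1/2}:L^a(\MS)\to L^a(\MS)$ is the Coulhon--Duong theorem, which applies because $a\le 2$. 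That is the whole proof.

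Your approach instead reruns the atomic argument from scratch, decomposing $\nabla\sL^{-\alpha/2}b$ into local, off-diagonal, and smooth parts and handling the gradient directly via the operator bound \eqref{eq:3-4}, the weighted $L^2$ estimate \eqref{eq:gri}, and semigroup analyticity. The individual steps check out: in Step~1 the volume exponents do cancel since $\tfrac1a-\tfrac12=\tfrac12-\tfrac{\alpha-1}{D}$ and your observation that one must route through $\nabla\sL^{-\alpha/2}:L^p\to L^2$ rather than through a Sobolev exponent is accurate given only what is in the paper; in Step~2 the exponent in \eqref{eq:3-4} is exactly $s^{-\alpha/2}$ and Lemma~\ref{lem:integral} with $\gamma=\tfrac\alpha2$, $\kappa=1$ applies; in Step~3 the Riesz--Thorin interpolation between $\norm{\nabla e^{-s\sL}}_{L^1\to L^1}\lesssim s^{-1/2}$ and $\norm{\nabla e^{-s\sL}}_{L^1\to L^2}\lesssim s^{-D/4-1/2}$ with $\theta=2(1-\tfrac1a)$ gives $s^{-\alpha/2}$ as claimed, and the remaining power counting $r^{-\alpha}\cdot r^{\alpha-2M}\cdot r^{2M}=1$ closes. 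So your proof stands on its own and is self-contained (it bypasses the Coulhon--Duong theorem entirely, which is an advantage if one wanted to avoid that black box), but it misses the factorization shortcut that the paper exploits and which makes the statement almost a corollary of Proposition~\ref{prop:fractional-power-H1}. It is worth internalizing that the ``cannot factor'' remark in the introduction pertains specifically to the weak-type endpoint in Proposition~\ref{prop:3-1}, not to the $H^1_\sL\to L^a$ endpoint here.
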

  \begin{proof} 
  We write
  \[
  \nabla \sL^{-\frac{\alpha}{2}} = \nabla \sL^{-\frac{1}{2}}  \sL^{-\frac{\alpha-1}{2}}.
  \]
  By Proposition~\ref{prop:fractional-power-H1},
  $\sL^{-\frac{\alpha-1}{2}}: H^1_\sL \to L^a (\MS)$ is bounded. The
  Riesz transform $\nabla \sL^{-\frac{1}{2}}$ is bounded on $L^a(\MS)$
  by \cite{Coulhon-Duong} since we took  $a \le 2$.
  \end{proof}

  Finally, we have the following result for spectral multipliers. It
  is the endpoint result of Proposition~\ref{prop:spec}.
  
\begin{corollary}\label{coro:spec-h1}
  Suppose the assumptions of
  Proposition~\ref{prop:fractional-power-H1}. Let $q \ge 2$ and denote
  by $q'$ its conjugate. Let $F: (0, \infty) \to \CC$ be such that
  $| F(\lambda) | \le C\, \lambda^{-\frac{D}{2q'}}$ for all
  $\lambda > 0$.  Then $F(\sL)$ is bounded from $H^1_\sL$ to
  $L^q(\MS)$.
\end{corollary}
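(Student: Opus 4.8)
The plan is to factor the spectral multiplier through a Riesz potential, so that the two ingredients already proved can be chained: Proposition~\ref{prop:fractional-power-H1}, which sends $H^1_\sL$ into $L^{p_1}(\MS)$, and Proposition~\ref{prop:spec}, which provides an $L^{p_1}$-to-$L^q$ bound for multipliers with a suitable power-type decay. Fix $\beta \in (0, D/2)$ and let $p_1 \in (1,2)$ be defined by $1 - \tfrac{1}{p_1} = \tfrac{\beta}{D}$; set $G(\lambda) = \lambda^{\beta/2} F(\lambda)$ for $\lambda > 0$. Since $\sL$ is self-adjoint, the functional calculus yields $F(\sL) = G(\sL)\,\sL^{-\frac{\beta}{2}}$, the two factors commuting.

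First I would apply Proposition~\ref{prop:fractional-power-H1} with exponent $\beta$ in place of $\alpha$: the relation $1 - \tfrac1{p_1} = \tfrac{\beta}{D}$ is exactly the one required, and $\beta < D/2$ keeps $p_1$ below $2$, so $\sL^{-\frac{\beta}{2}} : H^1_\sL \to L^{p_1}(\MS)$ is bounded. Next I would apply Proposition~\ref{prop:spec} to $G$ with the pair $(p_1, q)$ and $m = 2$; this needs $1 < p_1 \le 2 < q < \infty$ together with $|G(\lambda)| \le C\,\lambda^{-D/(2r)}$ where $\tfrac1r = \tfrac1{p_1} - \tfrac1q$. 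The hypothesis $|F(\lambda)| \le C\lambda^{-D/(2q')}$ gives $|G(\lambda)| \le C\,\lambda^{\frac{\beta}{2} - \frac{D}{2q'}}$, and since $\tfrac1r = \tfrac1{p_1} - \tfrac1q = \bigl(1 - \tfrac{\beta}{D}\bigr) - \tfrac1q = \tfrac1{q'} - \tfrac{\beta}{D}$ one checks that $\tfrac{\beta}{2} - \tfrac{D}{2q'} = -\tfrac{D}{2r}$, so the required bound on $G$ holds and $G(\sL) : L^{p_1}(\MS) \to L^q(\MS)$ is bounded. Composing, $F(\sL)f = G(\sL)\bigl(\sL^{-\frac{\beta}{2}}f\bigr) \in L^q(\MS)$ for $f \in H^1_\sL$, which proves the statement when $q > 2$.

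The exponent book-keeping above is the only computation, and it is routine: any $\beta \in (0, D/2)$ is admissible and keeps $p_1$ in the common range of validity of the two propositions. The one point not covered directly is the endpoint $q = 2$, which falls just outside the range $q > 2$ of Proposition~\ref{prop:spec}; I would treat it by duality. Since $\sL$ is self-adjoint, $F(\sL) : H^1_\sL \to L^2(\MS)$ is bounded if and only if its adjoint $\overline F(\sL) : L^2(\MS) \to \text{BMO}_\sL$ is bounded (using $(H^1_\sL)' = \text{BMO}_\sL$). Writing $\overline F(\sL) = \sL^{-\frac{D}{4}}\,\overline{\widetilde F}(\sL)$ with $\widetilde F(\lambda) = \lambda^{D/4} F(\lambda)$ — bounded on $(0,\infty)$ because $q' = 2$ — the functional calculus gives $\overline{\widetilde F}(\sL) : L^2 \to L^2$, while Corollary~\ref{cor:fractional-dual} applied with $\alpha = D/2$ (so that $D/\alpha = 2$) gives $\sL^{-\frac{D}{4}} : L^2(\MS) \to \text{BMO}_\sL$; composing settles $q = 2$. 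In short, no new analytic difficulty arises: everything reduces to matching the exponent relations and invoking Propositions~\ref{prop:fractional-power-H1} and~\ref{prop:spec} (together with Corollary~\ref{cor:fractional-dual} for the endpoint), the only mild subtlety being the separate treatment of $q = 2$.
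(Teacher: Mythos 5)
Your proposal is correct but takes a slightly different route than the paper. The paper factors $F(\sL) = \sL^{-\frac{\alpha}{2}}\,\widetilde F(\sL)\,\sL^{-\frac{\beta}{2}}$ with $\widetilde F(\lambda) = \lambda^{\frac{\alpha+\beta}{2}}F(\lambda)$ and \emph{always passes through $L^2$}: it picks $\beta = D/2$ so that $\sL^{-\beta/2}: H^1_\sL \to L^2$ by Proposition~\ref{prop:fractional-power-H1}, notes that $\widetilde F(\sL)$ is bounded on $L^2$ by the spectral theorem since $\widetilde F$ is bounded, and finishes with $\sL^{-\alpha/2}: L^2 \to L^q$ from Theorem~\ref{thm:3-1} (where $\tfrac12 - \tfrac1q = \tfrac{\alpha}{D}$). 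That three-factor decomposition absorbs the endpoint $q = 2$ automatically, since then $\alpha = 0$ and $\sL^{-\alpha/2} = I$. You instead use a two-factor decomposition $F(\sL) = G(\sL)\,\sL^{-\beta/2}$ through an intermediate $L^{p_1}$ with $p_1 \in (1,2)$ and delegate the step $G(\sL): L^{p_1}\to L^q$ to Proposition~\ref{prop:spec}; this is fine for $q>2$, but because that proposition requires strictly $q>2$ you need a separate argument for $q=2$, which you handle by duality via Corollary~\ref{cor:fractional-dual}. The duality argument is correct, though heavier than necessary: for $q = 2$ you could instead take $\beta = D/2$, giving $p_1 = 2$ and $G$ bounded, whence $G(\sL)$ is bounded on $L^2$ by the spectral theorem alone — which is exactly what the paper's choice of decomposition gives you uniformly for all $q\ge 2$. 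In short: both arguments are valid; the paper's always-through-$L^2$ factorization buys a uniform treatment of the endpoint, while your version buys a cleaner modularity by reusing Proposition~\ref{prop:spec} as a black box, at the cost of an extra case.
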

\begin{proof}
  As in the proof of Proposition~\ref{prop:spec} we write
  $F(\sL) = \sL^{-\frac{\alpha}{2}} \widetilde{F}(\sL)
  \sL^{-\frac{\beta}{2}}$ with
  $\widetilde{F}(\lambda) = F(\lambda) \lambda^{\frac{\alpha +
      \beta}{2}}$.  By Proposition~\ref{prop:fractional-power-H1},
  $\sL^{-\frac{\beta}{2}}$ is bounded from $H^1_\sL$ to $L^2(\MS)$
  provided $1 - \frac{1}{2} = \frac{\beta}{D}$, that is for
  $\beta = \frac{D}{2}$.  Next,  by Theorem~\ref{thm:3-1}, $\sL^{-\frac{\alpha}{2}}$ is bounded
  from $L^2(\MS)$ to $L^q(\MS)$ for
  $\frac{1}{2} - \frac{1}{q} = \frac{\alpha}{D}$. Now,
  $\widetilde{F}(\sL)$ is bounded on $L^2(\MS)$ if $\widetilde{F}$ is
  bounded on $(0, \infty)$. This later condition holds if
  $| F(\lambda) | \le C\, \lambda^{- \frac{\alpha + \beta}{2}} = C\,
  \lambda^{-\frac{D}{2q'}}$.
 \end{proof}

 We finish this section by some interesting observations on Schr\"odinger
 operators $\sL = \Delta + V$ on $\RR^D$.  Recall that in our
 notations, $\Delta$ is the non-negative Laplacian. We assume that
 $V $ is non-negative and belongs to the reverse H\"older class
 $RH_{\frac{D}{2}}$. We recall that $0 \le V \in RH_q$ if there exists
 a constant $C > 0$ such that
\[
  \left( \frac{1}{|B|} \int_B V^q\, dx \right)^{\frac{1}{q}}\le C\,  \left( \frac{1}{|B|} \int_B V\, dx \right)
\]
for all balls $B$ of $\RR^D$.

\begin{proposition}\label{prop:S-W.}
  Suppose that $D \ge 3$ and $0 \le V \in RH_{\frac{D}{2}}$. Then for
  $\alpha \in (0, D)$, there exists a positive constant $C$ such that
\[
  \norm{\sL^{-\frac{\alpha}{2}} f}_{L^{\frac{D}{D-\alpha}}} \le C\, \left[  \norm{f}_{L^1(\RR^D)} + \sum_{k=1}^D \norm{\frac{\partial}{\partial    x_k} \sL^{-\frac{1}{2}} f}_{L^1(\RR^D)} \right].
\]
\end{proposition}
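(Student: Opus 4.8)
The plan is to factor the estimate through the Hardy space $H^1_\sL$, combining two ingredients: that $\sL^{-\frac{\alpha}{2}}\colon H^1_\sL \to L^{\frac{D}{D-\alpha}}(\RR^D)$ is bounded, and that the right-hand side of the claimed inequality is comparable to $\norm{f}_{H^1_\sL}$. For the first ingredient I would check that $\sL = \Delta + V$ with $0 \le V$ and $D \ge 3$ satisfies the hypotheses of Proposition~\ref{prop:fractional-power-H1}: since $V \ge 0$, the domination $0 \le p_t^\sL(x,y) \le p_t^\Delta(x,y)$ transfers the Gaussian upper bound of order $m = 2$ from $\Delta$ to $\sL$; and $\norm{\sL^{\frac12}u}_{L^2(\RR^D)}^2 = \int_{\RR^D}(|\nabla u|^2 + V|u|^2)\,dx \ge \norm{\Delta^{\frac12}u}_{L^2(\RR^D)}^2$, so the classical Sobolev inequality on $\RR^D$ (this is where $D \ge 3$ is used) gives \eqref{eq:sob} with $m = 2$. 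Proposition~\ref{prop:fractional-power-H1} then yields $\sL^{-\alpha/2}\colon H^1_\sL \to L^{D/(D-\alpha)}(\RR^D)$, since $1 - \frac{D-\alpha}{D} = \frac{\alpha}{D}$; for $\alpha$ in the larger range $[\frac{D}{2},D)$ one composes $\sL^{-\alpha_0/2}\colon H^1_\sL \to L^{D/(D-\alpha_0)}$ (Proposition~\ref{prop:fractional-power-H1}, with a fixed $\alpha_0 \in (0,\frac{D}{2})$, $\alpha_0<\alpha$) with $\sL^{-(\alpha-\alpha_0)/2}\colon L^{D/(D-\alpha_0)} \to L^{D/(D-\alpha)}$ (the $L^p$--$L^q$ part of Theorem~\ref{thm:3-1}).

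It then remains to prove
\[
  \norm{f}_{H^1_\sL} \le C\Bigl(\norm{f}_{L^1(\RR^D)} + \sum_{k=1}^D \bignorm{\tfrac{\partial}{\partial x_k}\,\sL^{-\frac12}f}_{L^1(\RR^D)}\Bigr).
\]
This is exactly the characterization of $H^1_\sL$ by the Riesz transforms $R_k = \frac{\partial}{\partial x_k}\sL^{-\frac12}$: for a potential $V \in RH_{\frac{D}{2}}$ one has $f \in H^1_\sL$ if and only if $f$ and $R_1 f,\dots, R_D f$ all lie in $L^1(\RR^D)$, with comparable norms (see \cite{Dziubanski} and the references therein). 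One should also note here that the atomic space $H^1_\sL$ of Section~\ref{section:H1}, the Hardy space defined via the heat-semigroup maximal function, and the Hardy space defined via the square function all coincide under the present Gaussian bound, so the cited characterization applies to the space used here. Together with the first paragraph this proves the proposition.

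The main obstacle is this second step --- producing an atomic (or maximal-function) decomposition of $f$ out of $L^1$-control of $f$ and of the $R_k f$. The reverse H\"older hypothesis $V \in RH_{\frac{D}{2}}$ enters precisely there: it is what makes each $R_k$ a Calder\'on-Zygmund operator and what controls the extra term $\sL^{-\frac12}V\sL^{-\frac12}$ in the operator identity $I = \sum_{k=1}^D R_k^*R_k + \sL^{-\frac12}V\sL^{-\frac12}$ on $L^2(\RR^D)$. Since carrying this out from scratch amounts to redeveloping the Hardy-space theory of Schr\"odinger operators, I would invoke the known characterization rather than reprove it; the verification in the first paragraph, by contrast, is routine.
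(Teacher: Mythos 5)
Your proof follows the same route as the paper's: use Proposition~\ref{prop:fractional-power-H1} to get $\sL^{-\alpha/2}\colon H^1_\sL \to L^{D/(D-\alpha)}$, then invoke the Riesz-transform characterization of $H^1_\sL$ for $V \in RH_{D/2}$, which the paper obtains by chaining \cite[Theorem~4.1]{Duong-Ouhabaz-Yan}, \cite[Lemma~6]{Dziubanski} and \cite[Theorem~1.7]{Dziubanski-Zienkiewicz}. One worthwhile point you add that the paper does not make explicit is the observation that the proof of Proposition~\ref{prop:fractional-power-H1} (through the choice of $2_*$) implicitly requires $\alpha < D/2$, and that composing with the $L^p$--$L^q$ boundedness of Theorem~\ref{thm:3-1} recovers the full range $\alpha \in (0,D)$ asserted here.
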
 
\begin{proof} By Proposition~\ref{prop:fractional-power-H1}
\[
  \norm{\sL^{-\frac{\alpha}{2}} f}_{L^{\frac{D}{D-\alpha}}} \le C\,  \norm{f}_{H^1_\sL}.
\]
By \cite[Theorem~4.1]{Duong-Ouhabaz-Yan} and
\cite[Lemma~6]{Dziubanski} we have
\[
  \norm{f}_{H^1_\sL} \le C'\, \bignorm{ \; \sup_{t > 0} \; \bigl| e^{-t\sL} f  \bigr| \; }_{L^1(\RR^D)}.
 \]
 By \cite[Theorem 1.7]{Dziubanski-Zienkiewicz} the norm
 $\norm{ \; \sup_{t > 0} | e^{-t\sL} f |\; }_{L^1(\RR^D)}$ is equivalent to
 $\norm{f}_{L^1(\RR^D)} + \sum\limits_{k=1}^D \norm{\frac{\partial}{\partial
     x_k} \sL^{-\frac{1}{2}} f}_{L^1(\RR^D)}$ and the result follows.
 \end{proof} 

 The case $V = 0$ in this proposition is well known.  It can for
 example be seen by combining the result
 \cite[Theorem~4.1,p.101]{TaiblesonWeiss} mentioned in the introduction
 with \cite[Corollary~1,p.~221]{Stein1970}.

\section*{Acknowledgement}
The authors would like to thank the anonymous referees for their
careful reading of the manuscript and for their insightful comments
and helpful suggestions, which have significantly improved the clarity
of this work.

\providecommand{\bysame}{\leavevmode\hbox to3em{\hrulefill}\thinspace}

\end{document}